\newcommand{\KM}{Krasnosel’skii–Mann\xspace}
\newcommand{\suchthat}{\;\ifnum\currentgrouptype=16 \middle\fi|\;}
\newcommand{\until}[1]{\{1,\dots, #1\}}
\newcommand{\subscr}[2]{#1_{\textup{#2}}}
\newcommand{\setdef}[2]{\{#1 \; | \; #2\}}
\newcommand{\map}[3]{#1: #2 \rightarrow #3}
\newcommand{\real}{\mathbb{R}}
\newcommand{\realpositive}{\mathbb{R}_{>0}}
\newcommand{\realnonnegative}{\mathbb{R}_{\geq0}}
\newcommand{\complex}{\mathbb{C}}
\newcommand{\scirc}{\raise1pt\hbox{$\,\scriptstyle\circ\,$}}
\newcommand{\prox}{\mathrm{prox}}
\newcommand{\Iinfty}{I_{\infty}}
\newcommand{\sign}{\operatorname{sign}}
\DeclareSymbolFont{bbold}{U}{bbold}{m}{n}
\DeclareSymbolFontAlphabet{\mathbbold}{bbold}
\newcommand{\vect}[1]{\mathbbold{#1}}
\newcommand{\vectorzeros}[1][]{\vect{0}_{#1}}
\newcommand{\ds}{\displaystyle}
\newtheorem{algo}[theorem]{Algorithm}
\def\@opargbegintheorem#1#2#3{\trivlist
	\item[]{\bfseries #1\ #2\ (#3)} \itshape}
\newcommand{\jac}[1]{D\mkern-0.75mu{#1}}
\newcommand{\WP}[2]{\left\llbracket{#1}, {#2}\right\rrbracket}
\newcommand{\seminorm}[1]{{\left\vert\kern-0.25ex\left\vert\kern-0.25ex\left\vert #1
		\right\vert\kern-0.25ex\right\vert\kern-0.25ex\right\vert}}
\newcommand{\semimeasure}[1]{\mu_{\seminorm{\cdot}}\kern-0.5ex\left(#1\right)}
\newcommand{\inprod}[2]{\langle\!\langle{#1},{#2}\rangle\!\rangle}
\newcommand{\osL}{\operatorname{osL}}
\newcommand{\Lip}{\operatorname{Lip}}
\newcommand{\spectrum}{\operatorname{spec}}
\newcommand{\norm}[2]{\|#1\|_{#2}}
\newcommand{\zero}{\operatorname{Zero}}
\newcommand{\fixed}{\operatorname{Fix}}
\DeclareMathOperator*{\argmin}{arg\,min}
\DeclareMathOperator{\dom}{Dom}
\newcommand{\diagL}{\operatorname{diagL}}
\newcommand{\Id}{\mathsf{Id}}
\newcommand{\OF}{\mathsf{F}}
\newcommand{\OG}{\mathsf{G}}
\newcommand{\ON}{\mathsf{N}}
\newcommand{\OT}{\mathsf{T}}
\newcommand{\OR}{\mathsf{R}}
\newcommand{\OJ}{\mathsf{J}}
\newcommand{\OS}{\mathsf{S}}
\newcommand{\bigO}{\mathcal{O}}
\newcommand{\mcX}{\mathcal{X}}
\newcommand{\mcY}{\mathcal{Y}}
\newcommand{\change}[1]{#1}
\begin{document}

\title{Non-Euclidean Monotone Operator Theory and Applications}

\author{\name Alexander Davydov$\;\!$\thanks{The first two authors contributed equally} \email davydov@ucsb.edu \\
       \addr Center for Control, Dynamical Systems, and Computation\\
       University of California, Santa Barbara\\
       Santa Barbara, CA 93106-5070, USA
       \AND
       \name Saber Jafarpour$^{*}$ \email saber.jafarpour@colorado.edu \\
       \addr Department of Electrical, Computer, and Energy Engineering\\
       University of Colorado, Boulder \\
       Boulder, CO 80309-0020, USA
   	   \AND
   	   \name Anton V. Proskurnikov \email anton.p.1982@ieee.org \\
   	   \addr Department of Electronics and Telecommunications \\
   	   Politecnico di Torino\\
   	   Turin, Italy 10129
   	   \AND
   	   \name Francesco Bullo \email bullo@ucsb.edu \\
   	   \addr Center for Control, Dynamical Systems, and Computation\\
   	   University of California, Santa Barbara\\
   	   Santa Barbara, CA 93106-5070, USA
	}

\editor{Silvia Villa}

\maketitle

\begin{abstract}%   <- trailing '%' for backward compatibility of .sty file
While monotone operator theory is \change{often} studied on Hilbert spaces, many interesting problems in machine learning and optimization arise naturally in finite-dimensional vector spaces endowed with non-Euclidean norms, such as diagonally-weighted $\ell_{1}$ or $\ell_{\infty}$ norms.
This paper provides a natural generalization of monotone operator theory to finite-dimensional non-Euclidean spaces. The key tools are weak pairings and logarithmic norms. We show that the resolvent and reflected resolvent operators of non-Euclidean monotone mappings exhibit similar properties to their counterparts in Hilbert spaces. Furthermore, classical iterative methods and splitting methods for finding zeros of monotone operators are shown to converge in the non-Euclidean case. We apply our theory to equilibrium computation and Lipschitz constant estimation of recurrent neural networks, obtaining novel iterations and tighter upper bounds via forward-backward splitting.
\end{abstract}

\begin{keywords}
  non-Euclidean norms, monotone operator theory, fixed point equations, nonexpansive maps
\end{keywords}

\section{Introduction}

\textit{Problem description and motivation:}
Monotone operator theory is a fertile field of nonlinear functional analysis that extends the notion of monotone functions on $\real$ to mappings on Hilbert spaces. Monotone operator methods are widely used to
solve problems in machine learning~\citep{PLC-JCP:20b,EW-JZK:20}, data science~\citep{PLC-JCP:21}, optimization and control~\citep{AS:17,AB-ED-AS:19}, game theory~\citep{LP:20}, and systems analysis~\citep{TC-FF-RS:21}. A crucial part of this theory is the design of algorithms for computing zeros of monotone operators. This problem is central in convex optimization since (i) the subdifferential of any convex function is monotone and (ii) minimizing a convex function is equivalent to finding a zero of its subdifferential. To this end, there has been extensive research in the last decade in applying monotone operator methods to convex optimization; see, e.g.,~\citep{EKR-SB:16,PLC:18,EKR-WY:21}.

Existing monotone operator techniques are primarily based on inner-product spaces, while many problems are better-suited for analysis in more general normed spaces.  For instance robustness analysis of artificial neural networks in machine learning often requires the use of the $\ell_{\infty}$ norm for high-dimensional input data such as images~\citep{IJG-JS-CZ:15}. In distributed optimization, it is known that many natural conditions for the convergence of totally asynchronous algorithms are based upon contractions in an $\ell_{\infty}$ norm~\citep[Chapter~6, Section~3]{DPB-JNT:97}.

Motivated by problems in non-Euclidean spaces, we aim to extend monotone operator techniques for computing zeros of monotone operators to operators which are naturally ``monotone" with respect
to (w.r.t.) a non-Euclidean norm in a finite-dimensional space.

%One approach to extend monotone operator theory to Banach spaces has been the introduction of Bregman divergences to serve as a ``non-Euclidean" replacement for the Euclidean distance between two points~\citep{HHB-JMB-PLC:03}. This approach has been popularized in the work on mirror descent and in convex optimization problems where the problem geometry is better suited for non-Euclidean norms~\citep[Chapter~3]{ASN-DBY:83},~\citep{AB-MT:03}. However, this theory is still largely-based on the theory of Euclidean monotone operator theory and does not easily lend itself to many problems naturally posed in Banach spaces such as the $\ell_{\infty}$ robustness analysis of artificial neural networks.

\textit{Literature review:} The literature on monotone operators dates back to Minty and Browder~\citep{GJM:62,FEB:67} and the connection to convex analysis was drawn upon by Minty and Rockafellar~\citep{GJM:64,RTR:66}. Since these foundational works, the theory of monotone operators over Hilbert spaces and its connection with convex optimization continues to expand, especially in the last decade~\citep{HHB-PLC:17,EKR-SB:16,EKR-WY:21,EKR-RH-WY:21}. Despite these connections between convex optimization and monotone operators, many problems in machine learning involve monotone operators beyond gradients of convex functions. Examples of such problems include generative adversarial networks, adversarially robust training of models, and training of models under fairness constraints. 
Instead of minimizing a convex function, to address these problems, one must solve for variational inequalities, monotone inclusions, and game-theoretic equilibria. 
In each of these more general cases, monotone operator theory has played an essential role in their analyses.  

In machine learning, monotone operators have been used in the training of generative adversarial networks~\citep{GG-HB-GV-PV-SLJ:19}, in
the design of novel neural network architectures~\citep{EW-JZK:20}, in the analysis of equilibrium behavior (infinite-depth limit) of neural networks~\citep{PLC-JCP:20b}, in the estimation of Lipschitz constants of neural networks~\citep{PLC-JCP:20,CP-EW-JZK:21},
and in normalizing flows~\citep{BA-CK-YH-HJK:22}.
Monotone operators have also been studied in the machine learning community in the context of variational inequality algorithms, stochastic monotone inclusions, and saddle-point problems; see e.g.~\citep{PB-FB:16,JD:20,XC-CS-CAG-JD:22,TP-PL-PP-OF-VC:22,XZ-WBH-ZY:22,AA-AB-YM:23,MIJ-TL-MZ:23,TY-MIJ-TC:23} for recent works in this direction. See also the recent survey~\citep{PLC-JCP:21} for applications in data science.  %\citep{XZ-WBH-ZY:22,AA-AB-YM:23,PB-FB:16,MIJ-TL-MZ:23,BA-CK-YH-HJK:22,XC-CS-CAG-JD:22,TY-MIJ-TC:23,TP-PL-PP-OF-VC:22,JL-EKR:23}.

The theory of dissipative and accretive operators on Banach spaces largely parallels the theory of monotone operators on Hilbert spaces~\citep{KD:85}. Despite these parallels, this theory has found far fewer direct applications to machine learning and data science; instead it is mainly applied for iterative solving integral equations and PDEs in $L_p$ spaces for $p \neq 2$ (see the book,~\citealt{CC:09}, for iterative methods). Moreover, many works in Banach spaces focus on spaces that have a uniformly smooth or uniformly convex structure, which finite-dimensional $\ell_{1}$ and $\ell_{\infty}$ spaces do not possess. In a similar vein, methods based on Bregman divergences utilize smoothness and strict convexity of the distance-generating convex functions~\citep{HHB-JMB-PLC:03}. Connections between logarithmic norms and dissipative and accretive operators may be found in~\citep{GS:86,GS:06}. 

A concept similar to a monotone operator in a Hilbert space is that of a contracting vector field in dynamical systems theory~\citep{WL-JJES:98}. 
%vector field $\map{\OF}{\real^{n}}{\real^{n}}$ is said to be strongly infinitesimally contracting if all trajectories converge to one another exponentially fast w.r.t.\ some metric. 
If the metric \change{with respect to which the vector field is contracting} is the standard Euclidean distance, \change{the} vector field, $\OF$, is strongly infinitesimally contracting if and only if the negative vector field $-\OF$ is strongly monotone when thought of as on operator on $\real^{n}$. However, vector fields need not be contracting with respect to a Euclidean distance. Indeed, a vector field may be contracting w.r.t.\ a non-Euclidean norm but not a Euclidean one~\citep{ZA-EDS:14b}. Due to the connection between monotone operators and contracting vector fields, it is of interest to explore the properties of operators that may be thought of as monotone w.r.t.\ a non-Euclidean norm. In this spirit, preliminary connections
between contracting vector fields and monotone operators were made in~\citep{FB-PCV-AD-SJ:21e}.

\textit{Contributions:} Our contributions are as follows. First, to address the gap in applying monotone operator strategies to problems that arise in finite-dimensional non-Euclidean spaces, we propose a non-Euclidean monotone operator framework that is based on the theory of weak pairings~\citep{AD-SJ-FB:20o} and logarithmic norms. We use weak pairings as a substitute for inner products and we demonstrate that many classic results from monotone operator theory are applicable to its non-Euclidean counterpart. In particular, we show that the resolvent and reflected resolvent operators of a non-Euclidean monotone mapping exhibit properties similar to those arising in Hilbert spaces. To ensure that the resolvent and reflected resolvents have full domain, we prove an extension of the classic Minty-Browder theorem~\citep{GJM:62,FEB:67} in Theorem~\ref{thm:minty}. 
%Notably, we make no assumptions about the smoothness of the underlying space.

%\fbtodo{what would be one such assumption? is there an example of
	%  underlying space with too much smooth convexity requirements? like
	%  ``locally convex Banach spaces'', which $\ell_{1}\ell_{\infty}$ are not???}

Second, leveraging the non-Euclidean monotone operator framework, we show that traditional iterative algorithms such as the forward step method and proximal point method can be used to compute zeros of non-Euclidean monotone mappings.
%These results build heavily on both classical and modern works on iterative methods for computing fixed points of nonexpansive maps in Banach spaces~\citep{SI:76,RC-JAS-JV:14}.
We provide convergence rate estimates for these iterative algorithms and the Cayley method in Theorems~\ref{thm:fwdstep},~\ref{thm:proximalconvergence}, and~\ref{thm:Cayley-method} and demonstrate that for diagonally-weighted $\ell_{1}$ and $\ell_{\infty}$ norms, they exhibit improved convergence rates compared to their Euclidean counterparts. Notably, we prove that for a Lipschitz mapping which is monotone w.r.t.\ a diagonally-weighted $\ell_{1}$ or $\ell_{\infty}$ norm, the forward step method is guaranteed to converge for a sufficiently small step size, whereas convergence cannot be guaranteed if the mapping is monotone with respect to a Euclidean norm.

Third, we study operator splitting methods for mappings which are monotone w.r.t.\ diagonally-weighted $\ell_{1}$ or $\ell_{\infty}$ norms. In Theorems~\ref{thm:fwd-backwd} and~\ref{thm:PR-DR} we prove that the forward-backward, Peaceman-Rachford, and Douglas-Rachford splitting algorithms are all guaranteed to converge, with some key differences compared to the classical theory. For instance, in the classical setting where two operators, $\OF$ and $\OG$, are monotone w.r.t.\ a Euclidean norm, the forward-backward splitting algorithm will only converge if $\OF$ is cocoercive. In contrast, when considering $\ell_{1}$ or $\ell_{\infty}$ norms, Lipschitzness of $\OF$ is sufficient for convergence.

Fourth, we present new insights into non-Euclidean properties of proximal operators and their impact on the study of special set-valued operator inclusions. Specifically, in Proposition~\ref{prop:nonexpansive}, we demonstrate that when $\OF$ is the subdifferential of a separable, proper, lower semicontinuous, convex function, its resolvent and reflected resolvent are nonexpansive with respect to an $\ell_{\infty}$ norm. To showcase the practical relevance of this result, we apply our non-Euclidean monotone operator theory to the equilibrium computation of a recurrent neural network (RNN). We extend the recent work of~\citep{SJ-AD-AVP-FB:21f} and show that our theory provides novel iterations and convergence criteria for RNN equilibrium computation.
%We present several different iterations for the computation of the equilibrium of the RNN. We present numerical simulations validating the theoretical rates of convergence.

Finally, we study the robustness of the RNN via its $\ell_{\infty}$ norm Lipschitz constant. In Theorem~\ref{thm:RNN-Lip}, we generalize the results from~\citep{CP-EW-JZK:21} to non-Euclidean norms and provide sharper estimates for the $\ell_{\infty}$ Lipschitz constant than were provided in the previous work~\citep{SJ-AD-AVP-FB:21f}.

A preliminary version of this work appeared in~\citep{AD-SJ-AVP-FB:21j}. Compared to this preliminary version, this version (i) provides novel theoretical results on the analysis of nonsmooth operators which are monotone with respect to general norms, (ii) proves a novel generalization of the classical Minty-Browder theorem for these non-Euclidean monotone mappings, (iii) study special classes of set-valued inclusions by providing novel non-Euclidean properties of proximal operators, (iv) includes a more comprehensive application to RNNs, allowing for more general activation functions and studies the robustness of the neural network by providing a tighter Lipschitz estimate, and (v) includes proofs of all technical results.
Finally, we provide further comparisons to monotone operator theory on Hilbert spaces. \change{Other prior work,~\citep{AD-SJ-FB:20o,AD-AVP-FB:22q}, focuses on continuous-time
	contracting dynamical systems with respect to non-Euclidean norms and their robustness properties. In contrast, this work instead uses weak pairings, developed in~\citep{AD-SJ-FB:20o}, to establish monotonicity properties of maps with respect to non-Euclidean norms and how we can find zeros of these maps using iterative methods. The prior works~\citep{AD-SJ-FB:20o,AD-AVP-FB:22q} do not consider these discrete-time iterations.}

\section{Preliminaries}
\subsection{Notation}
We let $\realnonnegative$ be the set of nonnegative real numbers and $\realpositive$ be the set of positive real numbers. For a set $\mathcal{S}$, let $2^{\mathcal{S}}$ denote its power set. For a complex number $z$, let $\mathrm{Re}(z)$ denote its real part. For a vector $\eta \in \real^{n}$, let $[\eta]$ denote the diagonal matrix satisfying $[\eta]_{ii} = \eta_i$, \change{where $\eta_{1},\dots,\eta_n$ are the components of $\eta$}. Given a matrix $A \in \real^{n \times n}$, let $\spectrum(A)$ denote its spectrum.
For a mapping \change{$\map{\OF}{\mcX}{\mcY}$ where $\mcX \subseteq \real^{n}, \mcY \subseteq \real^{m}$},
let $\dom(\OF)$ be its domain.
If $\OF$ is differentiable, let $\jac{\OF}(x) := \frac{\partial \OF(x)}{\partial x}$ denote its Jacobian evaluated at $x$. For a mapping $\map{\OF}{\real^{n}}{\real^{n}}$, let $\zero(\OF) := \setdef{x \in \real^{n}}{\OF(x) = \vectorzeros[n]}$ and $\fixed(\OF) = \setdef{x \in \real^{n}}{\OF(x) = x}$ be the sets of zeros of $\OF$ and fixed points of $\OF$, respectively. We let $\map{\Id}{\real^{n}}{\real^{n}}$ be the identity mapping and $I_n \in \real^{n \times n}$ be the $n \times n$ identity matrix. For a vector $y \in \real^{n}$, define the mapping $\map{\sign}{\real^{n}}{\{-1,0,1\}^{n}}$ by $(\sign(y))_i = y_i/|y_i|$ if $y_i \neq 0$ and zero otherwise. For a convex function $\map{f}{\real^{n}}{{]{-}\infty,+\infty]}}$, let $\partial f(x) = \setdef{g \in \real^{n}}{f(x)-f(y) \geq g^{\top} (x-y), \change{\forall} y \in \real^{n}}$ be its subdifferential at $x$ and, if $f$ is differentiable, $\nabla f(x)$ be its gradient at $x$.

\subsection{Logarithmic Norms and Weak Pairings}
Instrumental to the theory of non-Euclidean monotone operators are logarithmic norms (also referred to as matrix measures, or going forward, log norms) discovered by Dahlquist and Lozinskii in 1958~\citep{GD:58,SML:58}.
\begin{definition}[Logarithmic norm]\label{def:log-norm}
	Let $\|\cdot\|$ denote a norm on $\real^{n}$ and also the induced operator norm on the set of matrices $\real^{n \times n}$. The \emph{logarithmic norm} of a matrix $A \in \real^{n\times n}$ is
	\begin{equation}\label{eq:matrixmeasure}
		\mu(A) = \lim_{h \to 0^{+}} \frac{\|I_n + hA\| - 1}{h}.
	\end{equation}
\end{definition}
The log norm of a matrix $A$ is the one-sided directional derivative of the induced norm in the direction of $A$ evaluated at the identity matrix. It is well known that this limit exists for any norm and matrix~\change{\citep[Lemma~1]{SML:58}}.
%\begin{subequations}
%	\begin{alignat*}{2}
	%	&\text{sub-additivity:}\quad&
	%	\mu(A+B) &\leq   \mu(A)+\mu(B)  , %\label{measure:subadd}
	%	\\
	%	&\text{weak homogeneity:}&
	%	\mu(\alpha A) &= \alpha \mu( A), \enspace\forall{\alpha}\geq0,
	%\label{measure:scaling}
	%	\\
	%&\text{convexity:}&
	%\mu(\theta A + (1-\theta) B) & \leq \theta \mu(A) + (1-\theta)  \mu(B) ,
	%\enspace\forall\theta\in[0,1],
	%\label{measure:convex}
	%\\
	%	&\text{norm/spectrum:}&
	%	-\norm{A}{}\leq -\mu(-A)  \leq
	%	\mathrm{Re}(\lambda) & \leq \mu(A) \leq \norm{A}{},
	%	\enspace\forall\lambda\in\spectrum(A),
	%\label{measure:munegative-Hurwitz}
	%	\\
	%	&\text{translation:}&
	%	\mu(A+c I_n) &= \mu(A)+c, \enspace\forall{c}\in\real.
	%\label{measure:translation}
	%	\end{alignat*}
%\end{subequations}
%We refer to~\citep{CAD-HH:72} for a list of 
Properties of log norms include positive homogeneity, subadditivity, convexity, and $\mathrm{Re}(\lambda) \leq \mu(A) \leq \|A\|$ for all $\lambda \in \spectrum(A)$, \change{\citep{SML:58}}.
Note that unlike the induced matrix norm, the log norm of a matrix may be negative. For more details on log norms,
see the influential survey~\citep{GS:06} and the recent monograph~\citep[Chapter~2]{FB:22-CTDS}.

We will be specifically interested in diagonally weighted $\ell_{1}$ and $\ell_{\infty}$
norms defined by
\begin{gather}
	\norm{x}{1,[\eta]} = \|[\eta]x\|_{1} =  \sum_{\change{i = 1}}^{\change{n}}\eta_i|x_i|
	\qquad\text{and}\qquad
	\norm{x}{\infty,[\eta]^{-1}} = \|[\eta]^{-1}x\|_{\infty} = \max_{\change{i \in \until{n}}}  \frac{1}{\eta_i}|x_i|.
\end{gather}
%where, given a positive vector $\eta\in\realpositive^n$, we use $[\eta]$ to denote the diagonal matrix with diagonal entries $\eta$. 
\change{The formulas for the corresponding induced norms and log norms are provided in \citep[Eq. (2.36)-(2.38)]{FB:23-CTDS} and are}
\begin{align*}
	\norm{A}{1,[\eta]} &= \max_{j\in\until{n}} \sum_{i=1}^{n} \frac{\eta_i}{\eta_j} |a_{ij}|,
	\quad
	&&\mu_{1,[\eta]}(A)  =
	\max_{j\in\until{n}} \Big( a_{jj} + \sum_{i=1,i\neq j}^{n}  |a_{ij}| \frac{\eta_i}{\eta_j}   \Big), \\
	\norm{A}{\infty,[\eta]^{-1}} &= \max_{i\in\until{n}} \sum_{j=1}^{n} \frac{\eta_j}{\eta_i}  |a_{ij}|,
	\quad
	&&\mu_{\infty,[\eta]^{-1}}(A) = \max_{i\in\until{n}} \Big( a_{ii} + \sum_{j=1,j\neq i}^{n}
	|a_{ij}| \frac{\eta_j}{\eta_i}\Big). \label{eq:inftyMatrixMeasure-appendix}
\end{align*}
We additionally review the notion of a weak pairing (WP) on
$\real^{n}$ from~\citep{AD-SJ-FB:20o} which generalizes inner products to
non-Euclidean spaces.
\begin{definition}[Weak pairing]
	A \emph{weak pairing} is a map
	$\WP{\cdot}{\cdot}: \real^{n} \times \real^{n} \to \real$ satisfying:
	\begin{enumerate}
		\item\label{WSIP1-appendix}(sub-additivity and continuity of first argument)
		$\WP{x_{1}+x_{2}}{y} \leq \WP{x_{1}}{y} + \WP{x_{2}}{y}$, for all
		$x_{1},x_{2},y \in \real^{n}$ and $\WP{\cdot}{\cdot}$ is continuous in its
		first argument,
		\item\label{WSIP3-appendix}(weak homogeneity)
		$\WP{\alpha x}{y}\! =\! \WP{x}{\alpha y} = \alpha\WP{x}{y}$ and
		$\WP{-x}{-y} = \WP{x}{y}$, for all
		$x,y \in \real^{n}, \alpha \geq 0$,
		\item\label{WSIP4-appendix}(positive definiteness) $\WP{x}{x} > 0$, for all
		$x \neq \vectorzeros[n],$
		\item\label{WSIP5-appendix}(Cauchy-Schwarz inequality)
		$|\WP{x}{y}| \leq \WP{x}{x}^{1/2}\WP{y}{y}^{1/2}$, for all
		$x, y \in \real^{n}.$
	\end{enumerate}
\end{definition}

For every norm $\|\cdot\|$ on $\real^{n}$, there exists a (possibly not
unique) compatible WP $\WP{\cdot}{\cdot}$ such that
$\|x\|^2=\WP{x}{x}$, for every $x\in \real^{n}$. If the norm is induced by
an inner product, the WP coincides with the inner product.
\begin{definition}[Deimling's inequality and curve norm derivative formula]\label{def:add-properties-WP}
	Let $\|\cdot\|$ be a norm on $\real^{n}$ with compatible WP $\WP{\cdot}{\cdot}$.
	\begin{enumerate}
		\item The WP $\WP{\cdot}{\cdot}$ satisfies~\emph{Deimling's inequality} if
		\begin{equation}
			\ds\WP{x}{y} \leq \|y\| \lim_{h \to 0^+} h^{-1}(\|y+hx\| - \|y\|), \qquad \text{ for all } x,y \in \real^{n}.
		\end{equation}
		
		\item The WP $\WP{\cdot}{\cdot}$ satisfies the \emph{curve norm derivative formula} if for all differentiable $\map{x}{{]a,b[}}{\real^{n}}$, $\|x(t)\|D^+\|x(t)\| = \WP{\dot{x}(t)}{x(t)}$ holds for almost every $t \in {]a,b[}$, where $D^+$ denotes the upper right Dini derivative.$\;\!$\footnote{The definition and properties of Dini derivatives are presented in~\citep{GG-SK:92}.}
	\end{enumerate}
	
\end{definition}

For every norm, there exists at least one WP that satisfies the properties in Definition~\ref{def:add-properties-WP}.$\;\!$\footnote{Indeed, given a norm, the map $\map{\WP{\cdot}{\cdot}}{\real^{n} \times \real^{n}}{\real}$ given by $\WP{x}{y} = \|y\|\lim_{h \to 0^+} h^{-1}(\|y+hx\| - \|y\|)$ defines a WP that satisfies all of these properties. For more discussions about properties of this pairing, we refer to~\citep[Section~13]{KD:85} and~\citep[Appendix A]{AD-SJ-FB:20o}.} Thus, going forward, we assume that WPs satisfy these additional properties. Indeed, due to Deimling's inequality, we have the following useful relationship between WPs and log norms.

\begin{lemma}[Lumer's equality,~{\citealt[Theorem~18]{AD-SJ-FB:20o}}]\label{lemma:Lumer}
	Let $\|\cdot\|$ be a norm on $\real^{n}$ with compatible WP $\WP{\cdot}{\cdot}$. Then for every $A \in \real^{n \times n}$,
	\begin{equation}
		\mu(A) = \sup_{\|x\| = 1} \WP{Ax}{x} = \sup_{x \neq \vectorzeros[n]} \frac{\WP{Ax}{x}}{\|x\|^2}.
	\end{equation}
\end{lemma}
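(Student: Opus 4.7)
The plan is to establish the two equalities separately. The equality $\sup_{\|x\|=1}\WP{Ax}{x} = \sup_{x \neq \vectorzeros[n]} \WP{Ax}{x}/\|x\|^2$ follows immediately from weak homogeneity: for $x \neq \vectorzeros[n]$, the substitution $x \mapsto x/\|x\|$ gives $\WP{A(x/\|x\|)}{x/\|x\|} = \WP{Ax}{x}/\|x\|^2$. The main content is the identity $\mu(A) = \sup_{\|x\|=1}\WP{Ax}{x}$, which I prove by establishing both inequalities.

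For the bound $\sup_{\|x\|=1}\WP{Ax}{x} \leq \mu(A)$, I would apply Deimling's inequality: for any unit vector $x$,
\[
\WP{Ax}{x} \leq \|x\|\lim_{h\to 0^+}\frac{\|x+hAx\|-\|x\|}{h} = \lim_{h\to 0^+}\frac{\|(I_n+hA)x\|-1}{h} \leq \mu(A),
\]
where the second inequality uses the operator-norm bound $\|(I_n+hA)x\|\leq\|I_n+hA\|$ for $\|x\|=1$. Taking the supremum yields the claim.

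For the reverse inequality $\mu(A) \leq \sup_{\|x\|=1}\WP{Ax}{x}$, I would argue by compactness combined with the WP axioms. For each $h>0$, compactness of the unit sphere produces $x_h$ with $\|x_h\|=1$ and $\|(I_n+hA)x_h\| = \|I_n+hA\|$; passing to a subsequence, $h_n\to 0^+$ and $x_{h_n}\to x^*$ with $\|x^*\|=1$. Using compatibility $\|v\|^2=\WP{v}{v}$, subadditivity in the first argument, weak homogeneity, and Cauchy--Schwarz,
\begin{align*}
\|I_n+hA\|^2 &= \WP{x_h + hAx_h}{(I_n+hA)x_h} \\
&\leq \WP{x_h}{(I_n+hA)x_h} + h\,\WP{Ax_h}{(I_n+hA)x_h} \\
&\leq \|I_n+hA\| + h\,\WP{Ax_h}{(I_n+hA)x_h},
\end{align*}
which rearranges into $\|I_n+hA\|\cdot(\|I_n+hA\|-1)/h \leq \WP{Ax_h}{(I_n+hA)x_h}$. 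As $h_n\to 0^+$, the left-hand side tends to $\mu(A)$; provided the right-hand side's limsup is at most $\WP{Ax^*}{x^*}$, I obtain $\mu(A) \leq \WP{Ax^*}{x^*} \leq \sup_{\|x\|=1}\WP{Ax}{x}$.

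The main obstacle is this last limit passage, since both arguments of the WP move: $Ax_{h_n}\to Ax^*$ in the first and $(I_n+h_nA)x_{h_n}\to x^*$ in the second, while the WP is only stipulated continuous in its first argument. Control of the first-argument change follows from the elementary bound $|\WP{a}{c}-\WP{b}{c}|\leq\|a-b\|\|c\|$, a direct consequence of subadditivity and Cauchy--Schwarz; the delicate step is upper semicontinuity of $y\mapsto\WP{v}{y}$ at $x^*$. I expect this to be supplied by Deimling's inequality combined with the fact that $y\mapsto\|y\|\lim_{h\to 0^+}(\|y+hv\|-\|y\|)/h$ is upper semicontinuous (being an infimum of continuous functions by convexity of the norm in $h$), or alternatively by integrating the curve norm derivative formula on $[0,h_n]$ to trade the pointwise WP value for an integral average whose limit is insensitive to the almost-everywhere ambiguity. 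The remaining ingredients are routine.
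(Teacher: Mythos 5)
The paper does not actually prove this lemma --- it is imported verbatim from~\cite[Theorem~18]{AD-SJ-FB:20o} --- so the comparison is with that reference's argument. Your normalization step and your proof of $\sup_{\|x\|=1}\WP{Ax}{x}\leq\mu(A)$ via Deimling's inequality are correct and match the cited proof. The reverse inequality is where you have a genuine gap, and it sits precisely where you suspected: the limit passage $\limsup_{n}\WP{Ax_{h_n}}{(I_n+h_nA)x_{h_n}}\leq\WP{Ax^*}{x^*}$. The WP axioms give continuity only in the first argument (your bound $|\WP{a}{c}-\WP{b}{c}|\leq\|a-b\|\,\|c\|$ handles that part correctly), but nothing in the axioms yields upper semicontinuity of $y\mapsto\WP{v}{y}$. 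Your first proposed repair does not close the argument: Deimling's inequality bounds $\WP{Ax_{h_n}}{y_{h_n}}$ above by the ``maximal'' pairing $\|y\|\lim_{s\to0^+}s^{-1}(\|y+sv\|-\|y\|)$, which is indeed u.s.c.\ as an infimum of continuous functions, but passing to the limit then only gives $\mu(A)\leq\|x^*\|\lim_{s\to0^+}s^{-1}(\|x^*+sAx^*\|-\|x^*\|)$ --- a quantity that \emph{dominates} $\WP{Ax^*}{x^*}$ by Deimling's inequality rather than being dominated by it. The inequalities point the same way and do not chain; you recover only the classical Lumer identity for the maximal pairing, not for the given WP.

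The standard way to obtain the lower bound --- and the one used in the cited reference --- abandons the maximizer $x_h$ entirely and uses the curve norm derivative formula, the one hypothesis you have not exploited. Set $m:=\sup_{\|x\|=1}\WP{Ax}{x}$ and apply the formula to $x(t)=e^{tA}x_0$ with $x_0\neq\vectorzeros[n]$: for almost every $t$, $\|x(t)\|D^+\|x(t)\|=\WP{Ax(t)}{x(t)}\leq m\|x(t)\|^2$ (rescaling by weak homogeneity), hence $D^+\|x(t)\|\leq m\|x(t)\|$, and Gr\"onwall's inequality for Dini derivatives gives $\|e^{tA}x_0\|\leq e^{mt}\|x_0\|$, i.e., $\|e^{tA}\|\leq e^{mt}$. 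Since $\mu(A)=\lim_{t\to0^+}t^{-1}\bigl(\|e^{tA}\|-1\bigr)\leq\lim_{t\to0^+}t^{-1}(e^{mt}-1)=m$, the reverse inequality follows. Your second fallback (``integrating the curve norm derivative formula'') gestures at this, but only as a patch inside the maximizer argument rather than as the argument itself; I recommend replacing the entire reverse-inequality paragraph with the Gr\"onwall computation above.
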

We will focus on WPs corresponding to diagonally-weighted $\ell_{1}$ and $\ell_{\infty}$ norms.
Specifically, from~\citep[Table~III]{AD-SJ-FB:20o}, we introduce the
WPs
$\map{\WP{\cdot}{\cdot}_{1,[\eta]}, \WP{\cdot}{\cdot}_{\infty,[\eta]^{-1}}}{\real^{n}\times\real^{n}}{\real}$
defined by
\begin{equation}
	\label{def:WSIP-1+infty-appendix}
	\WP{x}{y}_{1,[\eta]} = \|y\|_{1,[\eta]}\sign(y)^\top [\eta]x
	\qquad\text{and}\qquad  \WP{x}{y}_{\infty,[\eta]^{-1}} = \max_{i \in I_{\infty}([\eta]^{-1}y)} \eta_i^{-2} y_ix_i.
\end{equation}
where $\Iinfty(x) = \setdef{i\in\until{n}}{|x_i|=\norm{x}{\infty}}$. One
can show that both of these WPs satisfy Deimling's inequality and the curve-norm derivative formula. Formulas for more general $\ell_p$ norms are available in~\citep{AD-SJ-FB:20o}.
\subsection{Contractions, Nonexpansive Maps, and Iterations}

%\sashatodo{Define $\Lip$ and $\osL$ as the minimal constants}

\begin{definition}[Lipschitz continuity]
	Let $\|\cdot\|$ be a norm and $\map{\OF}{\real^{n}}{\real^{n}}$ be a mapping. $\OF$ is
	\emph{Lipschitz continuous} with constant $\ell \in \realnonnegative$ if
	\begin{equation}\label{eq:Lipschitz}
		\norm{\OF(x_{1})-\OF(x_{2})}{}\leq \ell \norm{x_{1}-x_{2}}{} \qquad \text{for all } x_{1},x_{2}\in\real^{n}.
	\end{equation}
	Moreover we define $\Lip(\OF)$ to be the minimal (or infimum) constant which satisfies~\eqref{eq:Lipschitz}.
\end{definition}

If two mappings $\map{\OF,\OG}{\real^{n}}{\real^{n}}$ are Lipschitz continuous w.r.t.\ the same norm, then the composition $\OF\circ \OG$ has Lipschitz constant $\Lip(\OF \circ \OG) \leq \Lip(\OF)\Lip(\OG)$.
%Additionally, for two scalars $\alpha,\beta \in \real$, $\Lip(\alpha\OF + \beta\OG) \leq |\alpha|\Lip(\OF) + |\beta|\Lip(\OG)$.

\begin{definition}[One-sided Lipschitz mappings,~{\citealt{AD-SJ-FB:20o}}]
	Given a norm $\norm{\cdot}{}$ with compatible WP
	$\WP{\cdot}{\cdot}$, a
	map $\map{\OF}{\real^{n}}{\real^{n}}$ is \emph{one-sided Lipschitz}
	with constant $c\in\real$ if
	\begin{equation} \label{eq:osL=WSIP-appendix}
		\WP{\OF(x_{1})-\OF(x_{2})}{x_{1}-x_{2}}  \leq c
		\norm{x_{1}-x_{2}}{}^2   \qquad \text{for all } x_{1},x_{2}\in\real^{n}.
	\end{equation}
	Moreover we define $\osL(\OF)$ to be the minimal (or infimum) constant which satisfies~\eqref{eq:osL=WSIP-appendix}.
\end{definition}

\change{As was proved in~\citep[Theorem~27]{AD-SJ-FB:20o},} if $\map{\OF,\OG}{\real^{n}}{\real^{n}}$ are one-sided Lipschitz w.r.t.\ the same WP, then $\osL(\alpha\OF) = \alpha \osL(\OF)$\change{,} $\osL(\OF + \OG) \leq \osL(\OF) + \osL(\OG)$\change{, and $\osL(\OF + c \Id) = \osL(\OF) + c$} for all $\alpha \geq 0$\change{, $c \in \real$}. Note that (i) the one-sided
Lipschitz constant is upper bounded by the Lipschitz constant, (ii) a
Lipschitz \change{continuous} map is always one-sided Lipschitz, and (iii) the one-sided
Lipschitz constant may be negative. Moreover, if $\OF$ is locally Lipschitz \emph{continuous}, we have an alternative characterization of $\osL(\OF)$.

\begin{lemma}[$\osL(\OF)$ for locally Lipschitz \change{continuous} $\OF$,~{\citealt[Theorem~16]{AD-AVP-FB:22q}}]\label{lemma:osL-local-Lip}
	Suppose the map $\map{\OF}{\real^{n}}{\real^{n}}$ is locally Lipschitz \change{continuous}. Then $\OF$
	is one-sided Lipschitz
	with constant $c\in\real$ if and only if$\;$\footnote{Note that for locally Lipschitz \change{continuous} $\OF$, $\jac{\OF}(x)$ exists for almost every $x$ by Rademacher's theorem.}
	\begin{equation}
		\mu(\jac{\OF}(x)) \leq c  \qquad \text{for almost every } x\in\real^{n}.
	\end{equation}
\end{lemma}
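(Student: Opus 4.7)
The plan is to prove necessity and sufficiency separately, with Lumer's equality (Lemma~\ref{lemma:Lumer}) serving as the bridge between the WP-based constant $\osL(\OF)$ and the log norm $\mu(\jac{\OF})$.

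For the necessity direction ($\Rightarrow$), I fix any point $x$ where $\jac{\OF}(x)$ exists (a set of full Lebesgue measure by Rademacher's theorem) and any $v\in\real^n$. Substituting $x_1 = x + hv$ and $x_2 = x$ into the one-sided Lipschitz inequality and applying weak homogeneity of the WP to factor out $h^2$ on both sides gives
\[
\WP{h^{-1}(\OF(x+hv) - \OF(x))}{v} \leq c\|v\|^2.
\]
Sending $h \to 0^+$ and invoking continuity of the WP in its first argument yields $\WP{\jac{\OF}(x)v}{v} \leq c\|v\|^2$. Taking the supremum over $\|v\|=1$ and applying Lumer's equality then delivers $\mu(\jac{\OF}(x)) \leq c$.

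For the sufficiency direction ($\Leftarrow$), I would combine Deimling's inequality with the fundamental theorem of calculus along the segment $x(t) = x_2 + t(x_1-x_2)$, $t\in[0,1]$. Deimling's inequality yields
\[
\WP{\OF(x_1) - \OF(x_2)}{x_1 - x_2} \leq \|x_1-x_2\|\,\lim_{h\to 0^+}\frac{\|(x_1-x_2) + h(\OF(x_1)-\OF(x_2))\| - \|x_1-x_2\|}{h}.
\]
By a Rademacher--Fubini argument, for almost every pair $(x_1,x_2)$ the segment $x(t)$ meets the non-differentiability set of $\OF$ only on a one-dimensional null set, so the FTC gives $\OF(x_1) - \OF(x_2) = \int_0^1 \jac{\OF}(x(t))(x_1-x_2)\,dt$. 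Substituting into the norm on the right, applying the triangle inequality under the integral, using $\|I_n\|=1$, and passing $h \to 0^+$ through the integral (justified by dominated convergence, since $h^{-1}(\|I_n + hA\| - 1) \leq \|A\|$ uniformly in $h>0$ and $\jac{\OF}$ is essentially bounded on the compact segment by the local Lipschitz constant of $\OF$) produces
\[
\lim_{h\to 0^+}\frac{\|(x_1-x_2)+h(\OF(x_1)-\OF(x_2))\|-\|x_1-x_2\|}{h} \leq \|x_1-x_2\|\int_0^1\mu(\jac{\OF}(x(t)))\,dt \leq c\|x_1-x_2\|.
\]
This establishes the one-sided Lipschitz inequality for almost every pair. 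To upgrade to every pair, I employ a translation trick: for a.e.\ $w\in\real^n$ the inequality holds at $(x_1+w,\,x_2+w)$, and since $(x_1+w)-(x_2+w) = x_1 - x_2$ remains fixed as $w$ varies, only the first WP slot shifts. Continuity of $\OF$ together with continuity of the WP in its first argument then passes the bound to $w = 0$.

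The main obstacle is the mismatch between the almost-everywhere existence of $\jac{\OF}$ on $\real^n$ and a bound that must hold for every pair $(x_1,x_2)$. Resolving this requires both the Rademacher--Fubini step (to run the FTC along almost-every segment) and the translation argument above (to pass from almost-every pair to every pair without relying on joint continuity of the WP, which is not available in general).
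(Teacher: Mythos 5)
The paper does not actually prove this lemma; it is imported verbatim from \cite[Theorem~16]{AD-AVP-FB:22q}, so there is no in-paper argument to compare against. On its own merits, your proof is correct and follows the standard route for results of this type. The necessity direction is clean: the difference quotient substitution, weak homogeneity to cancel $h^2$, continuity of the weak pairing in its first slot, and Lumer's equality (Lemma~\ref{lemma:Lumer}) give $\mu(\jac{\OF}(x))\leq c$ at every point of differentiability. For sufficiency, the chain Deimling's inequality $\to$ FTC along the segment $\to$ triangle inequality under the integral $\to$ dominated convergence (with the two-sided domination $|h^{-1}(\|I_n+hA\|-1)|\leq\|A\|$ and essential boundedness of $\jac{\OF}$ on a compact neighborhood of the segment) is exactly right, and you correctly identified the two genuine subtleties: (i) the FTC representation only holds for almost every segment, handled by the Rademacher--Fubini slicing argument, and (ii) the upgrade from almost every pair to every pair, handled by translating both endpoints so that the second argument of the weak pairing stays fixed, which is precisely what lets you get away with continuity in the first argument only. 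I see no gap.
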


%\begin{lemma}[Properties of $\osL$~{\citep[Proposition~27]{AD-SJ-FB:20o}}] \label{lemma:osLproperties}
%	Let $\map{\OF,\OG}{\real^{n}}{\real^{n}}$ be one-sided Lipschitz with respect to a WP $\WP{\cdot}{\cdot}$. Then for $c \in \real$,
%	\begin{enumerate}
	%		\item $|\osL(\OF)| \leq \Lip(\OF),$
	%		\item $\osL(\OF + c\Id) = \osL(\OF) + c,$
	%		\item $\osL(\alpha \OF) = \alpha\osL(\OF), \quad$ for all $\alpha \geq 0$,
	%		\item $\osL(\OF + \OG) \leq \osL(\OF) + \osL(\OG)$.
	%	\end{enumerate}	
%\end{lemma}

\begin{definition}[Contractions and nonexpansive maps]
	Let $\map{\OT}{\real^{n}}{\real^{n}}$ be Lipschitz \change{continuous} w.r.t.\ $\|\cdot\|$. We say $\OT$ is a \emph{contraction} if $\Lip(\OT) < 1$, and $\OT$ is \emph{nonexpansive} if $\Lip(\OT) \leq 1$.
\end{definition}

\begin{definition}[Picard iteration]
	Let $\map{\OT}{\real^{n}}{\real^{n}}$ be a contraction w.r.t.\ a norm $\|\cdot\|$ with $\Lip(\OT) < 1$.
	The \emph{Picard iteration} applied to $\OT$ with initial condition $x^0$ defines the sequence $\{x^{k}\}_{k=0}^\infty$ by
	\begin{equation}
		x^{k+1} = \OT(x^{k}).
	\end{equation}
\end{definition}
By the Banach fixed-point theorem $\OT$ has a unique fixed point, $x^{*}$, and the Picard iteration applied to $\OT$ satisfy
$\|x^{k} - x^{*}\| \leq \Lip(\OT)^{k}\|x^0 - x^{*}\|,$
for any initial condition $x^0$.

If $\OT$ is nonexpansive with $\fixed(\OT) \neq \emptyset$, Picard iteration may fail to find a fixed point of $\OT$. Such situations can be addressed by the following iteration and convergence result, initially proved in~\citep{SI:76} and with rate given in~\citep{RC-JAS-JV:14}.

%\begin{remark}
%	When the norm is induced by an inner product, the composition of two averaged functions yields another averaged function; see~\citep[Proposition~4.44]{HHB-PLC:17}. This, however, need not hold for non-Euclidean spaces. For this reason, we will frequently average non-expansive functions prior to applying iteration.
%\end{remark}

\begin{definition}[\KM iteration]
	Let $\map{\OT}{\real^{n}}{\real^{n}}$ be nonexpansive w.r.t.\ a norm $\|\cdot\|$. The \emph{\KM iteration}$\;\!$\footnote{The \KM iteration may be defined with step sizes $\theta_k \in {]0,1[}$ which vary for each iteration. In this document, we will only work with constant step sizes.} applied to $\OT$ with initial condition $x^0$ and $\theta \in {]0,1[}$ defines the sequence $\{x^{k}\}_{k=0}^\infty$ by
	\begin{equation}\label{eq:KMiterations}
		x^{k+1} = (1-\theta)x^{k} + \theta \OT(x^{k}).
	\end{equation}
\end{definition}
%Note that the \KM iterations correspond to the Banach-Picard iterations for an averaged function.

\begin{lemma}[\change{Asymptotic regularity and convergence} of \KM iteration, \change{\citealt{RC-JAS-JV:14,SI:76}}] \label{lemma:KMconvergence}
	Let $\map{\OT}{\real^{n}}{\real^{n}}$ be nonexpansive w.r.t.\ a norm $\|\cdot\|$ and consider the \KM iteration as in~\eqref{eq:KMiterations}. Suppose $\fixed(\OT) \neq \emptyset$. Then \change{for any initial condition $x^0$,}
	\begin{equation}
		\|x^{k} - \OT(x^{k})\| \leq \frac{2\inf_{x^{*} \in \fixed(\OT)}\|x^0 - x^{*}\|}{\sqrt{k\pi\theta(1-\theta)}} \change{= \bigO(1/\sqrt{k}).}
	\end{equation}
	%In particular, $\|x^{k} - \OT(x^{k})\| \to 0$ as $k \to \infty$ with $\|x^{k} - \OT(x^{k})\| \in \mathcal{O}(1/\sqrt{k}).$
	\change{Moreover, the sequence of iterates, $\{x_k\}_{k=0}^\infty$, converges to a fixed point of $\OT$.}
\end{lemma}
% Acknowledgements and Disclosure of Funding should go at the end, before appendices and references

\section{Non-Euclidean Monotone Operators}
\subsection{Definitions and Properties}

\begin{definition}[Non-Euclidean monotone mapping]\label{def:monotoneoperator}
	A mapping $\map{\OF}{\real^{n}}{\real^{n}}$ is strongly monotone with monotonicity parameter $c > 0$ w.r.t.\ a norm $\|\cdot\|$ on $\real^{n}$ if there exists a compatible WP $\WP{\cdot}{\cdot}$ and if for all $x,y \in \real^{n}$,
	\begin{equation}
		-\WP{-(\OF(x) - \OF(y))}{x - y} \geq c\|x - y\|^2.
	\end{equation}
	If the inequality holds with $c = 0$, we say $\OF$ is monotone w.r.t.\ $\|\cdot\|$.
\end{definition}
In the language of Banach spaces, such a function $\OF$ is called strongly accretive~\citep[Definition~8.10]{CC:09}. Note that Definition~\ref{def:monotoneoperator} is
equivalent to ${-}\osL(-\OF) \geq c$.

In the case of a Euclidean norm, the WP corresponds to the inner product and Definition~\ref{def:monotoneoperator} corresponds to the usual definition of a monotone operator as in \change{\citep{GJM:62} and~\citep[Definition~20.1]{HHB-PLC:17}.}
%~\citep[Definition~20.1]{HHB-PLC:17}.

%\change{While we consider only single-valued monotone operators in Definition~\ref{def:monotoneoperator}, it could be useful to consider set-valued ones, i.e., $\map{\OF}{\real^{n}}{2^{\real^{n}}}$. In this case we say $\OF$ is monotone with respect to a norm $\|\cdot\|$ if $-\WP{-(u - v)}{x - y} \geq 0$ for all $u \in F(x), v \in F(y)$. Then the graph of $\OF$ is defined by $\mathrm{graph}(\OF) := \setdef{(x,y) \in \real^{n} \times \real^{n}}{y \in \OF(x)}$. We say $\OF$ which is monotone w.r.t.\ $\|\cdot\|$ is \emph{maximally monotone} if there does not exist a mapping $\map{\OG}{\real^{n}}{2^{\real^{n}}}$ which is monotone with respect to $\|\cdot\|$ and whose graph properly contains $\mathrm{graph}(\OF)$. In what follows, }

By properties of $\osL$, 
%a sum of mappings which are monotone w.r.t.\ the same norm (and WP) are monotone. 
\change{if $\map{\OF,\OG}{\real^{n}}{\real^{n}}$ are both monotone w.r.t.\ the same norm (and WP), then ${-}\osL(-\OF - \OG) \geq {-}\osL(-\OF) - \osL(-\OG)$ and thus a sum of mappings which are monotone w.r.t.\ the same norm are monotone.}
Additionally, if $\OF$ is monotone with monotonicity parameter $c \geq 0$, then for any $\alpha \geq 0$, \change{${-}\osL(-\Id - \alpha \OF) = 1 - \alpha\osL(-\OF)$ and thus} $\Id + \alpha \OF$ is strongly monotone with monotonicity parameter $1 + \alpha c$.

\begin{remark}[Connection with contracting vector fields]\label{rmk:contraction}
	A mapping $\map{\OF}{\real^{n}}{\real^{n}}$ is strongly infinitesimally contracting with rate $c > 0$ w.r.t.\ a norm $\|\cdot\|$ on $\real^{n}$ provided $\osL(\OF) \leq -c$~\citep{AD-SJ-FB:20o}. If
	$c = 0$, we say $\OF$ is weakly infinitesimally contracting w.r.t.\ $\|\cdot\|$. Clearly $\OF$ is strongly monotone if and only if $-\OF$ is strongly infinitesimally contracting. \change{Vector fields which are strongly infinitesimally contracting w.r.t.\ a norm generate flows which are contracting with respect to the same norm. In the case of weakly infinitesimally contracting vector fields, their flows are nonexpansive.}
\end{remark}
%Similarly, in the language of general Banach spaces, such a function $\OF$ is called (strongly) dissipative~\citep[Section~13.2]{KD:85}.

%Clearly a function $\OF$ is (strongly) monotone if and only if $-\OF$ is (strongly) contracting.

%\aptodo{(1) Formally, $\Lip$ has been defined only for globally defined  maps (change Def. 2.5?); (2) it makes sense to notice that $\OF^{-1}$ is, generally, not defined globally unless $\OF$ is continuous (Theorem~3.7) One option is in principle to make this lemma part of Theorem~3.7.}

\begin{lemma}[Monotonicity for locally Lipschitz \change{continuous} mappings]\label{lemma:monotone-local-Lip}
	Let $\map{\OF}{\real^{n}}{\real^{n}}$ be locally Lipschitz \change{continuous}. $\OF$ is (strongly) monotone with monotonicity parameter $c \geq 0$ w.r.t.\ a norm $\|\cdot\|$ if and only if $-\mu(-\jac{\OF}(x)) \geq c$ for almost every $x \in \real^{n}$.
\end{lemma}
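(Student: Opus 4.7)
The plan is to reduce this to a direct application of Lemma~\ref{lemma:osL-local-Lip} after rewriting the monotonicity condition in terms of the one-sided Lipschitz constant of $-\OF$.

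First, I would unpack Definition~\ref{def:monotoneoperator}: $\OF$ is strongly monotone with parameter $c$ w.r.t.\ $\|\cdot\|$ if and only if
\[
\WP{-\OF(x) - (-\OF(y))}{x-y} = -\WP{-(\OF(x)-\OF(y))}{x-y} \cdot (-1)
\]
satisfies the appropriate inequality. More directly, the definition is equivalent to the statement that $-\OF$ is one-sided Lipschitz with constant $-c$, i.e.\ $\osL(-\OF) \leq -c$, which was already noted in the excerpt immediately after Definition~\ref{def:monotoneoperator}.

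Next, since $\OF$ is locally Lipschitz, so is $-\OF$, and Rademacher's theorem guarantees that $\jac{\OF}(x)$ (hence $\jac{(-\OF)}(x) = -\jac{\OF}(x)$) exists for almost every $x \in \real^n$. I would then invoke Lemma~\ref{lemma:osL-local-Lip} applied to the locally Lipschitz map $-\OF$ with constant $-c$: $\osL(-\OF) \leq -c$ if and only if $\mu(\jac{(-\OF)}(x)) \leq -c$ for almost every $x$. Substituting $\jac{(-\OF)}(x) = -\jac{\OF}(x)$ and multiplying by $-1$ yields the equivalent condition $-\mu(-\jac{\OF}(x)) \geq c$ for almost every $x$, which is exactly the claim.

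There is no real obstacle here; the lemma is essentially a corollary of Lemma~\ref{lemma:osL-local-Lip} after identifying monotonicity with one-sided Lipschitzness of $-\OF$. The only subtlety worth flagging is making sure that the inequality signs flip correctly under the substitution $\OG = -\OF$ and $c' = -c$, and that linearity of the Jacobian is used to obtain $\jac{(-\OF)} = -\jac{\OF}$ wherever the Jacobian exists. The case $c = 0$ (plain monotonicity) and $c > 0$ (strong monotonicity) are handled uniformly by this argument.
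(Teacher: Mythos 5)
Your proposal is correct and follows exactly the route the paper intends: the paper's proof is the single sentence that the lemma is ``a straightforward application of Lemma~\ref{lemma:osL-local-Lip},'' and your reduction via $\osL(-\OF)\leq -c$ together with $\jac{(-\OF)}(x)=-\jac{\OF}(x)$ is precisely that application spelled out. The only cosmetic issue is your first displayed identity, which is written redundantly; the subsequent identification of monotonicity with one-sided Lipschitzness of $-\OF$ is the correct and relevant step.
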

\begin{proof}
	Lemma~\ref{lemma:monotone-local-Lip} is a straightforward application of Lemma~\ref{lemma:osL-local-Lip}.
\end{proof}

We can see the application of Lemma~\ref{lemma:monotone-local-Lip} more explicitly in the context of continuously differentiable monotone operators in Euclidean norms. To be specific, for an operator $\map{\OF}{\real^{n}}{\real^{n}}$, let $\|\cdot\|_{2}$ be the Euclidean norm with corresponding inner product $\inprod{\cdot}{\cdot}$.
	Then, following~\change{\citep{GJM:62}}, $\OF$ is monotone with respect to $\|\cdot\|_{2}$ if
	$$\inprod{\OF(x) - \OF(y)}{x-y} \geq 0, \qquad \text{for all } x,y \in \real^{n}.$$
	If $\OF$ is continuously differentiable, this condition is known to be equivalent to (see e.g.,~\citep{EKR-SB:16}) $\jac{\OF}(x) + \jac{\OF}(x)^\top \succeq 0$, or equivalently $-\mu_{2}(-\jac{\OF}(x)) \geq 0$ or $\frac{1}{2}\lambda_{\min}(\jac{\OF}(x) + \jac{\OF}(x)^\top) \geq 0$, \change{where $\mu_{2}(A) = \frac{1}{2}\lambda_{\max}(A+A^\top)$ is the log norm corresponding to the norm $\|\cdot\|_{2}$. This result} coincides with what was demonstrated in Lemma~\ref{lemma:monotone-local-Lip}.

\begin{example}
	An affine function $\OF(x) = Ax + b$ is monotone if and only if $-\mu(-A) \geq 0$ and strongly monotone with parameter $c$ if and only if $-\mu(-A) \geq c$. This condition implies that the spectrum of $A$ lies in the portion of the complex plane given by $\setdef{z \in \complex}{\mathrm{Re}(z) \geq c}$.
\end{example}

\subsection{Resolvent, Reflected Resolvents, Forward Step Operators, and Lipschitz Estimates}\label{subsec:key-operators}

Monotone operator theory transforms the problem of finding a zero of a monotone operator into finding a fixed point of a suitably defined operator. 
Monotone operator theory on Hilbert spaces studies the resolvent and reflected resolvent, operators dependent on the original operator, with fixed points corresponding to zeros of the original monotone operator.
In this subsection we study these same two operators and also the forward step operator in the context of operators which are monotone w.r.t.\ a non-Euclidean norm. In particular, we characterize the Lipschitz constants of these operators, first providing Lipschitz upper bounds for arbitrary norms and then specializing to diagonally-weighted $\ell_{1}$ and $\ell_{\infty}$ norms.

\begin{definition}[Resolvent and reflected resolvent]
	Let $\map{\OF}{\real^{n}}{\real^{n}}$ be a monotone mapping w.r.t.\ some norm. The resolvent of $\OF$ with parameter $\alpha > 0$ denoted by $\map{\OJ_{\alpha\OF}}{\dom(\OJ_{\alpha\OF})}{\real^{n}}$ and defined by
	\begin{equation}
		\OJ_{\alpha\OF} = (\Id + \alpha\OF)^{-1}.
	\end{equation}
	The reflected resolvent of $\OF$ with parameter $\alpha > 0$ is denoted by $\map{\OR_{\alpha\OF}}{\dom(\OR_{\alpha\OF})}{\real^{n}}$ and defined by
	\begin{equation}
		\OR_{\alpha\OF} = 2 \OJ_{\alpha\OF} - \Id.
	\end{equation}
	%	To indicate the resolvent or Cayley operator is with respect to a particular function, we write $R_{\OF}$ and $C_{\OF}$.
\end{definition}

\begin{definition}[Forward step operator]
	Let $\map{\OF}{\real^{n}}{\real^{n}}$ be a mapping and $\alpha \in \real$. The forward step of $\OF$ with parameter $\alpha > 0$ is denoted by $\map{\OS_{\alpha\OF}}{\real^{n}}{\real^{n}}$ and defined by
	\begin{equation}
		\OS_{\alpha\OF} = \Id - \alpha \OF.
	\end{equation}
\end{definition}

Note that for any $\alpha > 0$, we have $\OF(x) = \vectorzeros[n]$ if and only if $x = \OJ_{\alpha\OF}(x) = \OR_{\alpha\OF}(x) = \OS_{\alpha\OF}(x)$, i.e., $\zero(\OF) = \fixed(\OJ_{\alpha\OF}) = \fixed(\OR_{\alpha\OF}) = \fixed(\OS_{\alpha\OF})$. \change{Note that under the assumption that $\OF$ is monotone, both $\OJ_{\alpha\OF}$ and $\OR_{\alpha\OF}$ are single-valued mappings.}

We have deliberately not been specific with the domains of the resolvent and reflected resolvent operators. As we will show in the following theorem, under mild assumptions (continuity and monotonicity), both of their domains are all of $\real^{n}$.

\begin{theorem}[A non-Euclidean Minty-Browder theorem]\label{thm:minty}
	Suppose $\map{\OF}{\real^{n}}{\real^{n}}$ is continuous and monotone. Then for every $\alpha > 0$, $\dom(\OJ_{\alpha\OF}) = \dom(\OR_{\alpha\OF}) = \real^{n}$.
\end{theorem}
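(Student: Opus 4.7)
The plan is to reduce the theorem to the surjectivity of the map $\Id + \alpha\OF$. Because $\OR_{\alpha\OF} = 2\OJ_{\alpha\OF}-\Id$, its domain coincides with $\dom(\OJ_{\alpha\OF})$, so it suffices to show that for every $y\in\real^n$ there exists $x\in\real^n$ with $x+\alpha\OF(x)=y$. Fix such a $y$ and set $\OG(x):=\alpha\OF(x)+x-y$. Then $\OG$ is continuous, and using sub-additivity of $\osL$ together with $\osL(-\Id)=-1$ and the monotonicity hypothesis $\osL(-\OF)\leq 0$, one gets $\osL(-\OG)\leq \alpha\osL(-\OF)-1\leq -1$, so $\OG$ is strongly monotone with parameter $1$. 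The theorem will follow once I exhibit a zero of $\OG$.

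The key device is the continuous-time dynamical system $\dot x(t)=-\OG(x(t))$. Peano's theorem provides local solutions from any initial condition since $-\OG$ is continuous. Two standard WP estimates upgrade this to a well-behaved global flow. First, applying the curve-norm derivative formula to $\|x(t)\|$, splitting $\WP{-\OG(x)}{x}=\WP{-(\OG(x)-\OG(0))}{x}+\WP{-\OG(0)}{x}$ via sub-additivity, bounding the first term by $-\|x\|^2$ via the one-sided Lipschitz inequality, and bounding the second by $\|\OG(0)\|\,\|x\|$ via Cauchy-Schwarz yields
\begin{equation*}
D^+\|x(t)\|\leq -\|x(t)\|+\|\OG(0)\|,
\end{equation*}
hence the a priori bound $\|x(t)\|\leq\max\{\|x(0)\|,\|\OG(0)\|\}$, which rules out finite-time blow-up and gives global existence. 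Second, the curve-norm derivative formula applied to $\|x_1(t)-x_2(t)\|$ for two solutions, combined with $\osL(-\OG)\leq -1$, gives $D^+\|x_1(t)-x_2(t)\|\leq -\|x_1(t)-x_2(t)\|$, hence $\|x_1(t)-x_2(t)\|\leq e^{-t}\|x_1(0)-x_2(0)\|$; in particular, solutions are unique.

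Global existence and uniqueness make the flow $\Phi_t$ well defined on $\real^n$, and the contraction estimate shows that $\Phi_1$ has Lipschitz constant $e^{-1}<1$. The Banach fixed-point theorem provides a unique fixed point $x^*$ of $\Phi_1$. The semigroup relation $\Phi_1(\Phi_t(x^*))=\Phi_t(\Phi_1(x^*))=\Phi_t(x^*)$ shows that every $\Phi_t(x^*)$ is also a fixed point of $\Phi_1$, so by uniqueness $\Phi_t(x^*)=x^*$ for all $t\geq 0$. Thus $x^*$ is an equilibrium of the ODE, i.e.\ $\OG(x^*)=0$, which rearranges to $(\Id+\alpha\OF)(x^*)=y$. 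Since $y$ was arbitrary, $\dom(\OJ_{\alpha\OF})=\real^n$, and then $\dom(\OR_{\alpha\OF})=\real^n$ follows at once.

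The main obstacle is the a priori bound: because only continuity and monotonicity are assumed, $\OG$ need not be (globally) Lipschitz, so a direct attempt to realize a zero as a fixed point of $x\mapsto x-t\OG(x)$ via Banach's theorem fails. The trick is to evaluate the one-sided Lipschitz inequality for $-\OG$ against the fixed reference point $0$ and absorb the resulting constant forcing term $\WP{-\OG(0)}{x}$ by Cauchy-Schwarz, which is exactly what converts the purely differential strong monotonicity into a true Lyapunov bound and thereby into a contractive time-one map of the flow.
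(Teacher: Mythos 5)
Your proof is correct and follows essentially the same route as the paper: reduce to surjectivity of $\Id+\alpha\OF$, observe that the shifted map $-\OG$ is one-sided Lipschitz with constant $-1$ (i.e.\ strongly infinitesimally contracting), and obtain an equilibrium of $\dot x=-\OG(x)$ as the Banach fixed point of a time-$\tau$ flow map. The only differences are that you supply a self-contained a priori bound for global existence where the paper simply invokes a contraction theorem from the cited reference, and your semigroup argument upgrading the fixed point of $\Phi_1$ to an equilibrium is a cleaner justification than the paper's brief ``cannot be a periodic orbit'' remark.
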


\begin{proof}
	Note that $\dom(\OJ_{\alpha\OF}) = \real^{n}$ provided that for every $u \in \real^{n}$, there exists $x \in \real^{n}$ such that $(\Id + \alpha\OF)(x) = u$. To establish this fact, consider the differential equation
	\begin{equation}\label{eq:ode}
		\dot{x} = -x - \alpha\OF(x) + u =: G(x).
	\end{equation}
	Note that any equilibrium, $x^{*}$, of~\eqref{eq:ode} satisfies $(\Id + \alpha\OF)(x^{*}) = u$. Thus it suffices to show that the differential equation~\eqref{eq:ode} has an equilibrium. First we note that for all $x,y \in \real^{n}$,
	\begin{equation}
		\begin{aligned}
			\WP{G(x) - G(y)}{x-y} \leq \WP{-(x - y)}{x - y} + \alpha\WP{-(\OF(x)-\OF(y))}{x-y} \leq -\|x - y\|^2.
		\end{aligned}
	\end{equation}
	%\aptodo{We could use Theorem~3.8 from~\citep{FB:22-CTDS} and make the proof shorter.}
	Thus, we conclude that $\osL(G) \leq -1$. In line with Remark~\ref{rmk:contraction}, we conclude that $G$ is strongly infinitesimally contracting which ensures uniqueness of solutions to~\eqref{eq:ode} (see~\citep[Theorem~31]{AD-SJ-FB:20o}). Let $\phi(t,x_0)$ denote the flow of the dynamics~\eqref{eq:ode} at time $t \geq 0$ from initial condition $x(0) = x_0$. Then by~\citep[Theorem~31]{AD-SJ-FB:20o}, we conclude that
	\begin{equation*}
		\|\phi(t,x_0) - \phi(t,y_0)\| \leq e^{-t}\|x_0 - y_0\|
	\end{equation*}
	for all $x_0,y_0 \in \real^{n}$ and for all $t \geq 0$. In other words, for a fixed $t > 0$, the map $x \mapsto \phi(t,x)$ is a contraction. By the Banach fixed point theorem, for $\tau > 0$, there exists unique $x^{*}$ such that $x^{*} = \phi(\tau,x^{*})$. 
	\change{Then either $x^{*}$ is an equilibrium point of~\eqref{eq:ode} or it is part of a periodic orbit with period $\tau$. If $x^{*}$ were part of a periodic orbit, then every other point on the periodic orbit would be a fixed point of $\phi(\tau,\cdot)$, contradicting the uniqueness of the fixed point from the Banach fixed point theorem. Thus, we conclude that $x^{*}$ is an equilibrium point of~\eqref{eq:ode}}
	%It is straightforward to verify that $x^{*}$ is an equilibrium point of~\eqref{eq:ode} since it cannot be a periodic orbit of period $\tau$ since this would contradict uniqueness from the Banach fixed point theorem. Thus, $x^{*}$ is an equilibrium point 
	and thus verifies $(\Id + \alpha\OF)(x^{*}) = u$. This proves that $\dom(\OJ_{\alpha\OF}) = \real^{n}$. The proof for $\OR_{\alpha\OF}$ is a consequence of $\dom(\OJ_{\alpha\OF}) = \real^{n}$.
\end{proof}

We have the following corollary about inverses of strongly monotone mappings.
\begin{corollary}[Lipschitz constants of inverses of strongly monotone operators]\label{lemma:inverse}
	Suppose $\OF: \real^{n} \to \real^{n}$ is continuous and strongly monotone with monotonicity parameter $c > 0$. Then $\map{\OF^{-1}}{\real^{n}}{\real^{n}}$ is a \change{Lipschitz} continuous mapping with Lipschitz constant estimate $\Lip(\OF^{-1}) \leq 1/c$.
\end{corollary}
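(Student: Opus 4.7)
The plan is to first establish that $\OF$ is a bijection from $\real^n$ to $\real^n$ by invoking Theorem~\ref{thm:minty}, and then to read off the Lipschitz bound on $\OF^{-1}$ directly from the strong monotonicity inequality and the Cauchy--Schwarz property of the weak pairing.

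First I would handle injectivity, which is essentially free. If $\OF(x)=\OF(y)$, then strong monotonicity gives
\[
0 = -\WP{-(\OF(x)-\OF(y))}{x-y} \geq c\|x-y\|^2,
\]
which forces $x=y$. Next I would establish surjectivity by reducing to Theorem~\ref{thm:minty}. Set $\OG := \OF - c\Id$; using subadditivity and scaling of $\osL$ (stated right after the one-sided Lipschitz definition), strong monotonicity $\osL(-\OF)\leq -c$ together with $\osL(c\Id)=c$ yields $\osL(-\OG)\leq \osL(-\OF)+c\leq 0$, so $\OG$ is continuous and monotone. The equation $\OF(x)=u$ can be rewritten as $(\Id + (1/c)\OG)(x) = u/c$, so applying Theorem~\ref{thm:minty} with $\alpha = 1/c > 0$ produces, for every $u\in\real^n$, a solution $x\in\real^n$. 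Combined with injectivity, this shows that $\OF^{-1}\colon\real^n\to\real^n$ is well-defined.

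Finally I would deduce the Lipschitz estimate. For $u,v\in\real^n$, set $x := \OF^{-1}(u)$ and $y := \OF^{-1}(v)$. Strong monotonicity combined with Cauchy--Schwarz for the WP and the identities $\|-(u-v)\|=\|u-v\|$, $\WP{-a}{-a}=\WP{a}{a}=\|a\|^2$ (weak homogeneity) gives
\[
c\|x-y\|^2 \;\leq\; -\WP{-(u-v)}{x-y} \;\leq\; |\WP{-(u-v)}{x-y}| \;\leq\; \|u-v\|\,\|x-y\|.
\]
Dividing by $\|x-y\|$ whenever nonzero yields $\|\OF^{-1}(u)-\OF^{-1}(v)\| \leq \|u-v\|/c$, and continuity of $\OF^{-1}$ is immediate from this Lipschitz bound.

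The main subtlety is the surjectivity step: Theorem~\ref{thm:minty} is stated only for mappings of the form $\Id + \alpha\OG$ with $\OG$ merely monotone, so one has to perform the shift/rescaling $\OG = \OF - c\Id$ to put $\OF$ into the required form. Once that maneuver is in place, injectivity and the $1/c$ Lipschitz bound are essentially one-line consequences of the strong monotonicity inequality together with the Cauchy--Schwarz property of the weak pairing.
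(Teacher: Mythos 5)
Your proof is correct, and the injectivity step and the final Lipschitz bound are exactly the paper's argument: both follow in one line from strong monotonicity plus the Cauchy--Schwarz property of the weak pairing. Where you diverge is surjectivity. The paper does not reduce to the statement of Theorem~\ref{thm:minty}; it re-runs the same dynamical argument on the flow of $\dot{x} = -\OF(x) + u$, using $\osL(-\OF) \leq -c < 0$ to get a strongly contracting vector field whose unique equilibrium solves $\OF(x) = u$. You instead keep Theorem~\ref{thm:minty} as a black box and massage $\OF$ into the required form $\Id + \alpha\OG$ via the shift $\OG := \OF - c\Id$ and the rescaling $(\Id + (1/c)\OG)(x) = (1/c)\OF(x)$; the verification that $\osL(-\OG) \leq \osL(-\OF) + \osL(c\Id) \leq 0$ is exactly right, and the identity reduces $\OF(x)=u$ to $(\Id + (1/c)\OG)(x) = u/c$, so the theorem applies verbatim. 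Your route is arguably cleaner in that it avoids repeating the contraction/Banach-fixed-point machinery, at the cost of the (harmless) algebraic maneuver and of implicitly using that $c\Id$ has one-sided Lipschitz constant exactly $c$ with respect to the same WP, which follows from weak homogeneity. The paper's route generalizes more directly to situations where one cannot conveniently split off a multiple of the identity, but for this corollary both are equally valid.
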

\begin{proof}
	To see this fact, note that
	\begin{equation}\label{eq:aux}
		\|\OF(x) - \OF(y)\|\|x - y\| \geq -\WP{-(\OF(x) - \OF(y))}{x - y} \geq c\|x - y\|^2,
	\end{equation}
	where the left hand inequality is the Cauchy-Schwarz inequality for WPs. So if $\OF(x) = \OF(y)$, then necessarily $x = y$, which implies that $\OF^{-1}$ is a single-valued mapping. The fact that $\dom(\OF^{-1}) = \real^{n}$ follows the same argument as in Theorem~\ref{thm:minty} instead studying the differential equation $\dot{x} = -\OF(x)$. Choosing $u,v\in\real^{n}$ and substituting
	$x = \OF^{-1}(u), y = \OF^{-1}(v)$ into~\eqref{eq:aux}, we conclude
	\begin{equation}
		\|u - v\| \geq c\|x - y\| = c\|\OF^{-1}(u) - \OF^{-1}(v)\|,
	\end{equation}
	which shows that $\Lip(\OF^{-1}) \leq 1/c$.
\end{proof}

For each of $\OJ_{\alpha\OF}, \OR_{\alpha\OF}$ and, $\OS_{\alpha\OF}$ we have now established that each of their domains is all of $\real^{n}$ and that fixed points of these operators correspond to zeros of $\OF$. In order to compute zeros of $\OF$, we aim to provide estimates of the Lipschitz constants of $\OJ_{\alpha\OF}, \OR_{\alpha\OF},$ and $\OS_{\alpha\OF}$ as a function of $\alpha$ and the norm and show when these maps are either contractions or nonexpansive. The following lemmas characterize these Lipschitz estimates.

\begin{lemma}[Lipschitz estimates of the forward step operator]\label{lemma:fwdstepLip}
	Let $\map{\OF}{\real^{n}}{\real^{n}}$ be Lipschitz \change{continuous} w.r.t.\ the norm $\|\cdot\|$ with constant $\Lip(\OF) = \ell$.
	\begin{enumerate}[(i)]
		\item\label{generalnormfwdstep} Suppose $\OF$ is monotone w.r.t.\ $\|\cdot\|$ with monotonicity parameter $c \geq 0$, then
		\begin{equation}\label{eq:LipFwd-general}
			%\Lip(\OS_{\alpha\OF}) \leq \Big(1 + \alpha c - \frac{\alpha^2\ell^2}{1-\alpha\ell}\Big)^{-1} < 1, \qquad  \text{for all } \alpha \in {\Big]0, \frac{c}{(c+\ell)\ell}\Big]}.
			\Lip(\OS_{\alpha\OF}) \leq e^{-\alpha c} + e^{\alpha \ell} - 1 - \alpha \ell, \qquad \text{for all } \alpha > 0.
		\end{equation}
		\item\label{1infnormfwdstep} Alternatively suppose $\|\cdot\|$ is a diagonally weighted $\ell_{1}$ or $\ell_{\infty}$ norm and $\OF$ is monotone w.r.t.\ $\|\cdot\|$ with monotonicity parameter $c \geq 0$, then
		\begin{equation}\label{eq:LipFwd-1inf}
			\Lip(\OS_{\alpha\OF}) \leq 1 - \alpha c \leq 1, \qquad \text{for all } \alpha \in {\Big]0, \frac{1}{\diagL(\OF)}\Big]},
		\end{equation}
		where $\ds\diagL(\OF) := \sup_{x \in \real^{n} \setminus \Omega_{\OF}}\max_{i\in\until{n}} (\jac{\OF}(x))_{ii} \leq \ell$, where $\Omega_{\OF}$ is the measure zero set of points where $\OF$ is not differentiable.
		%\sashatodo{Fix $\diagL$ definition for Jacobians which exist almost everywhere}
	\end{enumerate}
\end{lemma}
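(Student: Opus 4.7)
For part~\ref{generalnormfwdstep}, the plan is to reduce to a linear-operator estimate via an averaged-Jacobian representation of $\OF$. Since $\OF$ is globally Lipschitz, $\jac{\OF}$ exists almost everywhere by Rademacher's theorem, and I will write
\begin{equation*}
\OF(x) - \OF(y) = M(x,y)(x - y), \qquad M(x,y) := \int_0^1 \jac{\OF}(y + s(x-y))\,ds,
\end{equation*}
so that $\OS_{\alpha\OF}(x) - \OS_{\alpha\OF}(y) = (I_n - \alpha M(x,y))(x - y)$. The main technical point here is justifying this representation for merely Lipschitz $\OF$, which I plan to handle by mollifying $\OF$ with a standard kernel, proving the estimate for the smooth approximants, and passing to the limit; the monotonicity and Lipschitz constants transfer to the mollifications via convexity of $\mu$ and $\|\cdot\|$ on convolutions.

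With the linearization in hand, the next step is to establish two uniform bounds on $M(x,y)$: $\|M(x,y)\| \leq \ell$ by sub-additivity of the induced norm under integration, and $-\mu(-M(x,y)) \geq c$ by Jensen's inequality applied to the concave functional $A \mapsto -\mu(-A)$ (a consequence of the convexity of $\mu$), combined with Lemma~\ref{lemma:monotone-local-Lip}. From here the argument collapses to a classical linear estimate: splitting $I_n - \alpha M = e^{-\alpha M} + (I_n - \alpha M - e^{-\alpha M})$, the standard matrix-exponential bound gives $\|e^{-\alpha M}\| \leq e^{\alpha \mu(-M)} \leq e^{-\alpha c}$, while the remainder is a convergent series with $\bigl\|\sum_{k \geq 2}(-\alpha M)^k/k!\bigr\| \leq \sum_{k \geq 2}(\alpha \ell)^k/k! = e^{\alpha \ell} - 1 - \alpha \ell$ (using $\|M\| \leq \ell$ and monotonicity of $x \mapsto e^x - 1 - x$ on $\realnonnegative$). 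Summing these two contributions yields~\eqref{eq:LipFwd-general}.

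For part~\ref{1infnormfwdstep}, I plan to work directly at the Jacobian level, using the mean-value reduction $\Lip(\OS_{\alpha\OF}) \leq \sup_{x} \|I_n - \alpha \jac{\OF}(x)\|_{\infty, [\eta]^{-1}}$. Lemma~\ref{lemma:monotone-local-Lip} gives $-\mu_{\infty, [\eta]^{-1}}(-\jac{\OF}(x)) \geq c$ almost everywhere, which for the weighted $\ell_\infty$ norm unfolds entrywise to $J_{ii}(x) - \sum_{j \neq i}|J_{ij}(x)|\eta_j/\eta_i \geq c$ for every $i$, so in particular $0 \leq c \leq J_{ii}(x) \leq \diagL(\OF)$. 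The decisive observation is that, for $\alpha \in {]0, 1/\diagL(\OF)]}$, the inclusion $\alpha J_{ii}(x) \in [0,1]$ allows dropping the absolute value on the diagonal: $|1 - \alpha J_{ii}(x)| = 1 - \alpha J_{ii}(x)$. Expanding the weighted $\ell_\infty$ operator norm of $I_n - \alpha \jac{\OF}(x)$ as a maximum of row sums and separating diagonal from off-diagonal contributions then yields
\begin{equation*}
\|I_n - \alpha \jac{\OF}(x)\|_{\infty, [\eta]^{-1}} = 1 - \alpha\bigl(-\mu_{\infty,[\eta]^{-1}}(-\jac{\OF}(x))\bigr) \leq 1 - \alpha c,
\end{equation*}
and the supremum over $x$ completes the proof. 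The $\|\cdot\|_{1, [\eta]}$ case is entirely analogous with rows and columns interchanged and $\eta_i/\eta_j$ in place of $\eta_j/\eta_i$; the bookkeeping claim $\diagL(\OF) \leq \ell$ follows since each diagonal entry of a matrix is controlled by any induced operator norm.
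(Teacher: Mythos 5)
Your proof is correct and follows essentially the same route as the paper: part (i) hinges on the identical splitting $I_n-\alpha A = e^{-\alpha A}+\bigl(e^{-\alpha A}-I_n+\alpha A\bigr)$ with $\|e^{-\alpha A}\|\le e^{\alpha\mu(-A)}\le e^{-\alpha c}$ and the tail series bounded by $e^{\alpha\ell}-1-\alpha\ell$, and part (ii) is the same row-sum computation in which $\alpha\le 1/\diagL(\OF)$ lets you drop the absolute value on the diagonal and recognize $1+\alpha\mu_{\infty,[\eta]^{-1}}(-\jac{\OF}(x))$. The only (immaterial) difference is that you lift the linear estimate to $\OF$ via a mollified averaged-Jacobian representation, whereas the paper applies it pointwise to $\jac{\OF}(x)$ and uses the a.e.\ characterization $\Lip(\OS_{\alpha\OF})=\mathrm{ess\,sup}_x\|\jac{\OS_{\alpha\OF}}(x)\|$.
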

%We provide the proof for Lemma~\ref{lemma:fwdstepLip} in Appendix~\ref{app:Lipfwd}.
\begin{proof}%[Proof of Lemma~\ref{lemma:fwdstepLip}]
	Regarding \change{item~(\ref{generalnormfwdstep})},
	we recall the inequality~\citep[pp. 14]{GD:58},~\citep[Prop. 2.1]{GS:06}
	\begin{equation}\label{eq:Coppel}
		\|e^{\alpha A}\| \leq e^{\alpha \mu(A)}, \qquad \text{ for all } \alpha \geq 0, A \in \real^{n \times n}.
	\end{equation}
	%We then denote
	%$$S := \sum_{i=2}^\infty \frac{\alpha^i A^i}{i!} = e^{\alpha A} - I_n - \alpha A$$
	%and it is straightforward to see that
	%$$\|S\| \leq \sum_{i=2}^\infty \frac{\alpha^i \|A\|^i}{i!} = e^{\alpha \|A\|} - 1 - \alpha \|A\|$$
	
	We additionally note that since $\OF$ is Lipschitz \change{continuous}, $\OS_{\alpha\OF}$ is as well and $\OS_{\alpha\OF}$ has  $\Lip(\OS_{\alpha\OF}) \leq L$ if and only if $\|\jac{\OS_{\alpha\OF}}(x)\| \leq L$ for almost every $x \in \real^{n}$. Also we have that $\jac{\OS_{\alpha\OF}}(x) = I_n - \alpha\jac{\OF}(x)$ everywhere it exists and that $\jac{\OF}(x)$ satisfies $-\mu(-\jac{\OF}(x)) \geq c$ and $\|\jac{\OF}(x)\| \leq \ell$. In what follows, when we write $\jac{\OF}(x)$, we mean for all $x$ for which the Jacobian exists.
	
	To derive an upper bound on $\|I_n - \alpha\jac{\OF}(x)\|$, we define
	\begin{equation*}
		S(x) := \sum_{i=2}^\infty \frac{(-\alpha)^i \jac{\OF}(x)^i}{i!} = e^{-\alpha \jac{\OF}(x)} - I_n + \alpha \jac{\OF}(x)
	\end{equation*}
	and it is straightforward to see that
	$\|S(x)\| \leq \sum_{i=2}^\infty \frac{\alpha^i \|\jac{\OF}(x)\|^i}{i!} \leq e^{\alpha\ell} - 1 - \alpha\ell.$
	Moreover,
	\begin{equation}
		\begin{aligned}
			\|I_n - \alpha \jac{\OF}(x)\| &\leq \|e^{-\alpha \jac{\OF}(x)}\| + \|S(x)\| \leq e^{\alpha\mu(-\jac{\OF}(x))} + e^{\alpha\ell} - 1 - \alpha\ell \\
			&\leq e^{-\alpha c}+e^{\alpha\ell} - 1 - \alpha\ell.
		\end{aligned}
	\end{equation}
	Since this bound holds for all $x$ for which $\jac{\OS_{\alpha\OF}}(x)$ exists, the result is proved.
	
	Regarding item~\change{(\ref{1infnormfwdstep})}, for every $x \in \real^{n}$ for which $\jac{\OF}(x)$ exists,
	\begin{align}
		\|I_n - \alpha \jac{\OF}(x)\|_{\infty,[\eta]^{-1}} &= \max_{i\in\until{n}} |1 - \alpha (\jac{\OF}(x))_{ii}| + \sum_{j=1,j\neq i}^n |{-}\alpha (\jac{\OF}(x))_{ij}|\frac{\eta_j}{\eta_i} \\
		&= \max_{i\in\until{n}} 1 - \alpha(\jac{\OF}(x))_{ii} + \sum_{j=1,j\neq i}^n |{-}\alpha (\jac{\OF}(x))_{ij}|\frac{\eta_j}{\eta_i} \label{eq:diagLbound}\\
		&= 1 + \alpha \mu(-\jac{\OF}(x)) \leq 1 - \alpha c \label{eq:norm-mu-trick},
	\end{align}
	where $\eqref{eq:diagLbound}$ holds because $0 < \alpha \leq \frac{1}{\diagL(\OF)}$ so that $1 - \alpha (\jac{\OF}(x))_{ii} \geq 0$ for all $x \in \real^{n}, i \in \until{n}$ and~\eqref{eq:norm-mu-trick} is due to the formula for $\mu_{\infty,[\eta]^{-1}}$. The proof for $\mu_{1,[\eta]}$ is analogous, replacing row sums by column sums, and is omitted.
\end{proof}
%\aptodo{In Fig.~1, should we also use different line styles?}
\begin{remark}
	If $c > 0$, then for small enough $\alpha > 0$, one can make the upper bound on $\Lip(\OS_{\alpha\OF})$ in~\eqref{eq:LipFwd-general} less than unity. In particular, one can show that minimizing the upper bound~\eqref{eq:LipFwd-general} yields the optimal step size $\subscr{\alpha}{opt} = \frac{1}{\ell}\ln(s(\gamma))$ and contraction factor $s(\gamma) + s(\gamma)^{-\gamma} -1 -\ln(s(\gamma))$, where $\gamma = c/\ell \leq 1$ and $s(\gamma)$ is the unique solution to the transcendental equation $s - 1 - \gamma s^{-\gamma} = 0$.
	
	%numerically minimize $f_{c,\ell}(\alpha) := e^{-\alpha c} + e^{\alpha \ell} - 1 - \alpha \ell$ to find an optimal step size $\alpha^{*}$ since $f_{c,\ell}$ is strictly convex in $\alpha$.
\end{remark}
\begin{remark}\label{rmk:nonexpansive-fwdstep}
	Note that for general norms, if $\OF$ is monotone, but not strongly monotone, then $\OS_{\alpha\OF}$ need not be nonexpansive for any $\alpha>0$. Indeed, consider $\OF(x) = \left(\begin{smallmatrix} 0 & 1 \\ -1 & 0 \end{smallmatrix}\right)x$, which is monotone w.r.t.\ the $\ell_{2}$ norm, but $\OS_{\alpha\OF}$ is not nonexpansive for any $\alpha > 0$. On the other hand, Lemma~\ref{lemma:fwdstepLip}(\ref{1infnormfwdstep}) implies that if $\OF$ is monotone w.r.t.\ a diagonally weighted $\ell_{1}$ or $\ell_{\infty}$ norm, then $\OS_{\alpha\OF}$ is nonexpansive for sufficiently small $\alpha$.
	%and thus one may apply \KM iteration to find zeros of $\OF$.
\end{remark}

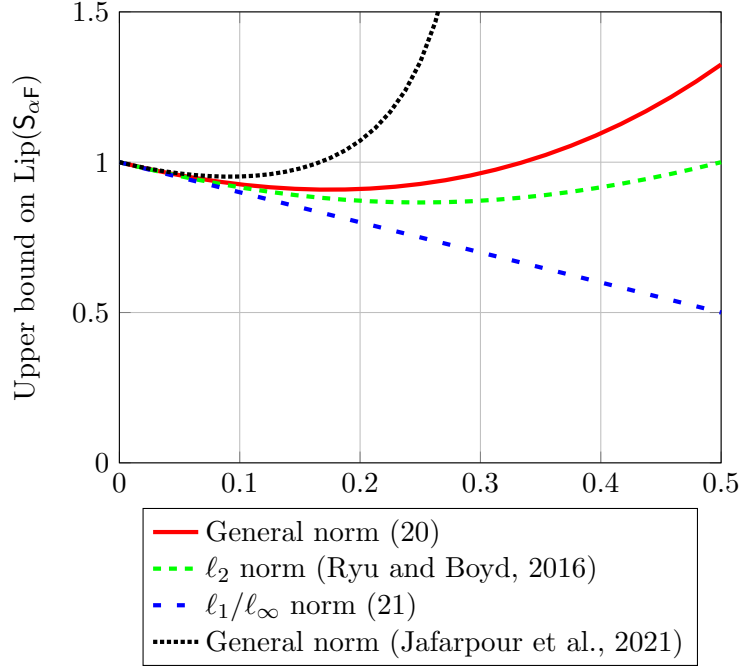
\begin{figure}[h]
	\centering
	\begin{tikzpicture}[scale = 1]
		\begin{axis}[width=8cm,height=6cm,scale only axis,
			% specify the domain and range for the x and y axis
			xmin=0, xmax=0.5, ymin=0, ymax=1.5,
			% specify the grid style
			grid=major,
			% specify the axis labels
			xlabel={$\alpha$}, ylabel={Upper bound on $\Lip(\OS_{\alpha\OF})$},
			% specify the legend style
			%legend pos=,
			%legend style={
				%	cells={anchor=west},
				%	font=\normalsize}
			legend style={at={(0.5,-0.2)},anchor=north},
			legend cell align={left}
			]
			
			% plot the function f(x) = exp(-x) + exp(2x) - 1 - 2x
			\addplot[
			% specify the color and line style
			red, solid, ultra thick,
			% specify the domain of the function
			domain=0:0.5
			%legend entry={$1$}
			] {exp(-x) + exp(2*x) - 1 - 2*x};
			\addlegendentry{General norm~\eqref{eq:LipFwd-general}}
			
			% plot the function f(x) = exp(-x) + exp(2x) - 1 - 2x
			\addplot[
			% specify the color and line style
			green, dashed, ultra thick,
			% specify the domain of the function
			domain=0:0.5
			] {sqrt(1-2*x + x*x*4)};
			\addlegendentry{$\ell_{2}$ norm~\citep{EKR-SB:16}}
			
			% plot the function g(x) = 1 - x
			\addplot[
			% specify the color and line style
			blue, loosely dashed, ultra thick,
			% specify the domain of the function
			domain=0:0.5
			%legend entry={$\ell_{1}/\ell_{\infty}$}
			] {1-x};
			\addlegendentry{$\ell_{1}/\ell_{\infty}$ norm~\eqref{eq:LipFwd-1inf}}
			
			\addplot[
			% specify the color and line style
			black, densely dotted, ultra thick,
			% specify the domain of the function
			domain=0:0.3
			%legend entry={General norm - old}
			] {1/(1 + x - 4*x^2/(1-2*x))};
			\addlegendentry{General norm \citep{SJ-AD-AVP-FB:21f}}
			
			%\legend{}
		\end{axis}
	\end{tikzpicture}
	\caption{Plots of upper bounds of $\Lip(\OS_{\alpha\OF})$ with respect to different norms. We fix parameters $c = 1, \ell = 2$ and vary the choice of norm. The solid red curve corresponds to the Lipschitz estimate~\eqref{eq:LipFwd-general} for arbitrary norms, the densely dashed green curve corresponds to the estimate $\Lip(\OS_{\alpha\OF}) \leq \sqrt{1 - 2\alpha c + \alpha^2\ell^2}$ from~\citep[pp.~16]{EKR-SB:16} for the $\ell_{2}$ norm, the loosely dashed blue curve corresponds to the estimate~\eqref{eq:LipFwd-1inf} for diagonally-weighted $\ell_{1}/\ell_{\infty}$ norms which is valid on the interval $]0, \frac{1}{\diagL(\OF)}]$. Finally, the dotted black curve corresponds to the estimate $\Lip(\OS_{\alpha\OF}) \leq \big(1 + \alpha c - \frac{\alpha^2\ell^2}{1-\alpha\ell}\big)^{-1}$ previously established in~\citep[Theorem~1]{SJ-AD-AVP-FB:21f}. We see that the estimate~\eqref{eq:LipFwd-general} is a tighter estimate than the estimate from~\citep{SJ-AD-AVP-FB:21f} and that Lipschitz upper bounds are least conservative in the case of diagonally-weighted $\ell_{1}/\ell_{\infty}$ norms.} \label{fig:LipFwdStepEstimates}
\end{figure}

We plot the upper bounds on the estimates of $\Lip(\OS_{\alpha\OF})$ as a function of $\alpha$ and choice of norm for fixed parameters $c$ and $\ell$ in Figure~\ref{fig:LipFwdStepEstimates}.

\begin{lemma}[Lipschitz constant of the resolvent operator]\label{lemma:contractionResolvent}
	Suppose $\OF: \real^{n} \to \real^{n}$ is continuous and monotone with monotonicity parameter $c
	\geq 0$. Then,
	\begin{equation}
		\Lip(\OJ_{\alpha\OF}) \leq \frac{1}{1 + \alpha c}, \qquad \text{ for all } \alpha > 0.
	\end{equation}
\end{lemma}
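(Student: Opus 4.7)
The plan is to reduce the claim to Corollary~\ref{lemma:inverse} by recognizing the resolvent as the inverse of a strongly monotone mapping. Writing $\OH := \Id + \alpha\OF$, the definition of the resolvent gives $\OJ_{\alpha\OF} = \OH^{-1}$. Since $\OF$ is continuous and monotone with parameter $c \geq 0$, the remark immediately following Definition~\ref{def:monotoneoperator} says that $\OH$ is continuous and strongly monotone with monotonicity parameter $1 + \alpha c$. Because $\alpha > 0$ and $c \geq 0$, this parameter is strictly positive, so the hypotheses of Corollary~\ref{lemma:inverse} are met for $\OH$.

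Applying that corollary to $\OH$ yields that $\OH^{-1}$ is a single-valued, continuous mapping with domain all of $\real^n$ and Lipschitz constant at most $1/(1+\alpha c)$. Substituting $\OJ_{\alpha\OF} = \OH^{-1}$ then gives the desired bound, completing the argument in essentially one line.

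The real content therefore sits in the two upstream results being invoked: the composition/shift rule establishing strong monotonicity of $\Id + \alpha\OF$ (which in turn relies on the identity $-\WP{-z}{z} = \|z\|^2$, an immediate consequence of Lumer's equality applied to $-I_n$), and Corollary~\ref{lemma:inverse}. The latter itself depends on Theorem~\ref{thm:minty} to guarantee the inverse is defined on all of $\real^n$ and on the Cauchy-Schwarz inequality for WPs to control the Lipschitz constant. I do not expect any substantive obstacle; the only subtle point is the full-domain issue, which is precisely what the non-Euclidean Minty-Browder theorem handles. A purely direct route is also available — set $x = \OJ_{\alpha\OF}(u)$, $y = \OJ_{\alpha\OF}(v)$, expand $u - v = (x - y) + \alpha(\OF(x) - \OF(y))$, use sub-additivity in the first argument of the WP together with monotonicity of $\OF$ to get $-\WP{-(u-v)}{x-y} \geq (1+\alpha c)\|x-y\|^2$, and close with Cauchy-Schwarz — but the corollary-based approach is shorter and reuses infrastructure the paper has just built.
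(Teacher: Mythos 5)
Your argument is exactly the paper's proof: observe that $\Id + \alpha\OF$ is continuous and strongly monotone with parameter $1+\alpha c > 0$, then apply Corollary~\ref{lemma:inverse} to its inverse, which is $\OJ_{\alpha\OF}$. The reasoning is correct and the supporting remarks about the upstream dependencies (the shift rule for monotonicity and the Minty-Browder theorem guaranteeing full domain) match how the paper structures the material.
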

\begin{proof}
	We observe that $\Id + \alpha\OF$ is
	strongly monotone with parameter $1 + \alpha c$. Then by
	Corollary~\ref{lemma:inverse}, the result holds.
\end{proof}

\begin{lemma}[Lipschitz constant of the reflected resolvent] \label{lemma:CayleyLip}
	Suppose $\OF: \real^{n} \to \real^{n}$ is Lipschitz \change{continuous} with constant $\ell$ w.r.t.\ a norm $\|\cdot\|$.
	\begin{enumerate}[(i)]
		\item\label{LipCayleyGeneral} Suppose $\OF$ is monotone w.r.t.\ $\|\cdot\|$ with monotonicity parameter $c
		\geq 0$. Then
		\begin{equation}
			\Lip(\OR_{\alpha\OF}) \leq \frac{e^{-\alpha c} + e^{\alpha \ell} - 1 - \alpha \ell}{1 + \alpha c}, \qquad \text{ for all } \alpha > 0.
		\end{equation}
		\item\label{LipCayley1inf} Alternatively suppose $\|\cdot\|$ is a diagonally weighted $\ell_{1}$ or $\ell_{\infty}$ norm. Moreover, suppose $\OF$ is monotone w.r.t.\ $\|\cdot\|$ with monotonicity parameter $c
		\geq 0$. Then,
		\begin{equation}
			\Lip(\OR_{\alpha\OF}) \leq \frac{1 - \alpha c}{1 + \alpha c} \leq 1, \qquad \text{ for all } \alpha \in {\Big]0,
				\frac{1}{\diagL(\OF)}\Big]}.
		\end{equation}
	\end{enumerate}
\end{lemma}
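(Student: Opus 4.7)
My plan is to reduce this lemma directly to the two Lipschitz estimates we have already proved for $\OS_{\alpha\OF}$ (Lemma~\ref{lemma:fwdstepLip}) and $\OJ_{\alpha\OF}$ (Lemma~\ref{lemma:contractionResolvent}) by exploiting a convenient factorization of the reflected resolvent. The key observation is the identity
\begin{equation*}
\OR_{\alpha\OF} \;=\; \OS_{\alpha\OF} \scirc \OJ_{\alpha\OF},
\end{equation*}
which I would establish first. To see this, fix $x \in \real^n$, set $y = \OJ_{\alpha\OF}(x)$, and note that by definition of the resolvent $x = y + \alpha\OF(y)$, so $\alpha\OF(y) = x - y$. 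Substituting,
\begin{equation*}
\OS_{\alpha\OF}(y) \;=\; y - \alpha\OF(y) \;=\; y - (x - y) \;=\; 2y - x \;=\; 2\OJ_{\alpha\OF}(x) - x \;=\; \OR_{\alpha\OF}(x),
\end{equation*}
as desired. Note that this identity makes sense globally on $\real^n$ because Theorem~\ref{thm:minty} guarantees $\dom(\OJ_{\alpha\OF}) = \real^n$, and the hypothesis that $\OF$ is globally Lipschitz ensures $\OS_{\alpha\OF}$ is defined and Lipschitz on all of $\real^n$.

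Once the factorization is in hand, I would invoke the submultiplicativity of Lipschitz constants under composition (stated right after the definition of Lipschitz continuity in the preliminaries): since both $\OS_{\alpha\OF}$ and $\OJ_{\alpha\OF}$ are Lipschitz continuous w.r.t.\ the same norm $\|\cdot\|$, we have
\begin{equation*}
\Lip(\OR_{\alpha\OF}) \;\leq\; \Lip(\OS_{\alpha\OF}) \cdot \Lip(\OJ_{\alpha\OF}).
\end{equation*}
The bound $\Lip(\OJ_{\alpha\OF}) \leq \tfrac{1}{1+\alpha c}$ from Lemma~\ref{lemma:contractionResolvent} applies for every $\alpha > 0$ under the monotonicity hypothesis, regardless of the specific norm.

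For part~\ref{LipCayleyGeneral}, I would plug in the general-norm bound $\Lip(\OS_{\alpha\OF}) \leq e^{-\alpha c} + e^{\alpha\ell} - 1 - \alpha\ell$ from Lemma~\ref{lemma:fwdstepLip}\ref{generalnormfwdstep}, yielding the stated estimate immediately. For part~\ref{LipCayley1inf}, on the restricted interval $\alpha \in {]0, 1/\diagL(\OF)]}$ for a diagonally-weighted $\ell_1$ or $\ell_\infty$ norm, I would substitute the sharper estimate $\Lip(\OS_{\alpha\OF}) \leq 1 - \alpha c$ from Lemma~\ref{lemma:fwdstepLip}\ref{1infnormfwdstep}, producing $\Lip(\OR_{\alpha\OF}) \leq \tfrac{1-\alpha c}{1+\alpha c}$, and then observing that $\tfrac{1-\alpha c}{1+\alpha c} \leq 1$ whenever $c \geq 0$ and $\alpha > 0$.

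Given the existence of the prior lemmas, there is no real obstacle here; the only genuine step of content is recognizing and verifying the factorization $\OR_{\alpha\OF} = \OS_{\alpha\OF} \scirc \OJ_{\alpha\OF}$. After that, both bounds drop out by composition. If anything warrants extra care, it is ensuring that the composition bound applies with respect to the same norm $\|\cdot\|$ — which it does by hypothesis — and that for part~\ref{LipCayley1inf} the step-size restriction required for Lemma~\ref{lemma:fwdstepLip}\ref{1infnormfwdstep} is inherited directly as the restriction in the present statement.
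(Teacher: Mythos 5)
Your proof is correct and follows essentially the same route as the paper: the paper also factors $\OR_{\alpha\OF} = \OS_{\alpha\OF} \circ \OJ_{\alpha\OF}$ (citing a reference rather than verifying the identity directly, as you do) and then combines the Lipschitz bounds from Lemma~\ref{lemma:fwdstepLip} and Lemma~\ref{lemma:contractionResolvent} via submultiplicativity under composition.
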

\begin{proof}
	Recall from~\citep[pp.~21]{EKR-SB:16} that since $\OF$ is monotone and continuous, we have that $\OR_{\alpha\OF} = \OS_{\alpha\OF} \circ \OJ_{\alpha\OF}$. Both results then follow from $\Lip(\OR_{\alpha\OF}) \leq \Lip(\OS_{\alpha\OF})\Lip(\OJ_{\alpha\OF})$ and the bounds on $\Lip(\OS_{\alpha\OF})$ from Lemma~\ref{lemma:fwdstepLip} and on $\Lip(\OJ_{\alpha\OF})$ from Lemma~\ref{lemma:contractionResolvent}.
\end{proof}

Lemma~\ref{lemma:CayleyLip} stands in striking contrast with results on monotone operators in Hilbert spaces which says that for any maximally monotone operator,$\;\!\!$\footnote{\change{Recall that in monotone operator theory on Hilbert spaces, a set-valued mapping $\map{\OF}{\real^{n}}{2^{\real^{n}}}$ is maximally monotone if it is monotone and there does not exist another monotone operator, $\OG$, whose graph properly contains the graph of $\OF$. See~\citep[Sec.~20.2]{HHB-PLC:17} for more details.}} $\OF$, the reflected resolvent of $\OF$ with parameter $\alpha > 0$ is nonexpansive for every $\alpha > 0$. Indeed, in the non-Euclidean case, this property cannot be recovered as is demonstrated in the following example.

\begin{example}\label{example:linear}
	Consider the linear mapping
	%\begin{equation*}
	$\OF(x) = Ax = \left(\begin{smallmatrix} 2 & -2 \\ 1 & 1 \end{smallmatrix}\right)x.$
	%\end{equation*}
	$\OF$ is monotone w.r.t.\ the $\ell_{\infty}$ norm since
	$\ds-\mu_{\infty}(-A) = -\mu_{\infty}\left(\begin{smallmatrix} -2 & 2 \\ -1 & -1 \end{smallmatrix}\right) = 0.$
	%Then for $\alpha = 1$, we compute
	%\begin{equation*}
	%\OJ_{\alpha\OF}(x) = \begin{pmatrix} 1/4 & 1/4 \\ -1/8 & 3/8 \end{pmatrix} x \qquad \OR_{\alpha\OF}(x) = \begin{pmatrix} -1/2 & 1/2 \\ -1/4 & -1/4 \end{pmatrix} x.
	%\end{equation*}
	%Thus, $\Lip(\OJ_{\alpha\OF}) = 1/2$ and $\Lip(\OR_{\alpha\OF}) = 1$. In other words, for $\alpha = 1$, $\OJ_{\alpha\OF}$ is a contraction and $\OR_{\alpha\OF}$ is nonexpansive. \\
	For $\alpha = 2$, we compute
	\begin{equation*}
		\OJ_{\alpha\OF}(x) = \begin{pmatrix} 3/23 & 4/23 \\ -2/23 & 5/23 \end{pmatrix} x, \qquad \OR_{\alpha\OF}(x) = \begin{pmatrix} -17/23 & 8/23 \\ -4/23 & -13/23 \end{pmatrix} x.
	\end{equation*}
	Thus, $\Lip(\OJ_{\alpha\OF}) = 7/23$ and $\Lip(\OR_{\alpha\OF}) = 25/23$. In other words, for $\alpha = 2$, $\OJ_{\alpha\OF}$ is a contraction and $\OR_{\alpha\OF}$ is not nonexpansive.
\end{example}

Despite this key divergence from the classical theory, we will still be able to prove convergence of iterative algorithms involving the reflected resolvent under suitable assumptions on the parameter $\alpha > 0$.
%\aptodo{The next text is more natural in Section~IV.}

\section{Finding Zeros of Non-Euclidean Monotone Operators}

%\subsection{Preliminaries on Zeros of Monotone Operators}

For a mapping $\map{\OF}{\real^{n}}{\real^{n}}$ which is continuous and monotone, consider the problem of finding an $x \in \real^{n}$ that satisfies
\begin{equation}
	\OF(x) = \vectorzeros[n].
\end{equation}
Without further assumptions on $\OF$, this problem may have no solutions or nonunique solutions. First we provide a preliminary sufficient condition for existence and uniqueness of a solution.

\begin{lemma}[Uniqueness of zeros of strongly monotone maps]
	Suppose $\map{\OF}{\real^{n}}{\real^{n}}$ is continuous and strongly monotone. Then $\zero(\OF)$ is a singleton.
\end{lemma}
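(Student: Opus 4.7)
The plan is to prove existence and uniqueness separately, and both should follow quickly from material already developed in the paper. The main observation is that Corollary~\ref{lemma:inverse} already does essentially all the work: it establishes that for continuous, strongly monotone $\OF$ with monotonicity parameter $c > 0$, the inverse $\OF^{-1}$ is a single-valued continuous mapping defined on all of $\real^n$. Once this is in hand, the singleton claim is immediate by evaluating $\OF^{-1}$ at the zero vector.

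For uniqueness, I would argue directly from the defining inequality of strong monotonicity. Suppose $\OF(x) = \OF(y) = \vectorzeros[n]$. Then the term $-\WP{-(\OF(x) - \OF(y))}{x-y}$ vanishes by the weak homogeneity property of the WP, so strong monotonicity yields $0 \geq c\|x - y\|^2$, and since $c > 0$ this forces $x = y$.

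For existence, I would invoke Corollary~\ref{lemma:inverse}, which guarantees that $\dom(\OF^{-1}) = \real^n$; then $x^\star := \OF^{-1}(\vectorzeros[n])$ is a zero of $\OF$. It is worth noting that the existence half of that corollary was itself proved via the ODE trick in the proof of Theorem~\ref{thm:minty}, now applied to $\dot{x} = -\OF(x)$: strong monotonicity gives $\osL(-\OF) \leq -c$, so the time-$\tau$ flow is a contraction for any $\tau > 0$, and its unique Banach fixed point cannot be a nontrivial periodic orbit, so it must be an equilibrium.

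There is no real obstacle here; the statement is a direct corollary of the Minty-Browder-type theorem and its consequence for inverses, and it is presumably included at this point in the paper as a stepping stone to motivate the iterative algorithms that will actually \emph{compute} the unique zero.
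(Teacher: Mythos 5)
Your proof is correct, but it takes a slightly different route than the paper does. The paper's own argument goes through the resolvent: it notes $\zero(\OF) = \fixed(\OJ_{\alpha\OF})$ for any $\alpha > 0$, invokes Lemma~\ref{lemma:contractionResolvent} to get $\Lip(\OJ_{\alpha\OF}) \leq 1/(1+\alpha c) < 1$, and concludes by the Banach fixed point theorem that $\OJ_{\alpha\OF}$ has a unique fixed point. You instead bypass the resolvent entirely and apply Corollary~\ref{lemma:inverse} directly to $\OF$ itself: full domain of $\OF^{-1}$ gives existence of $\OF^{-1}(\vectorzeros[n])$, and the strong monotonicity inequality (with $\WP{\vectorzeros[n]}{x-y} = 0$ by weak homogeneity) gives uniqueness. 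Both arguments bottom out in the same machinery --- the ODE/flow contraction argument of Theorem~\ref{thm:minty} and the Banach fixed point theorem --- since Lemma~\ref{lemma:contractionResolvent} is itself just Corollary~\ref{lemma:inverse} applied to $\Id + \alpha\OF$. Your version is marginally more economical and makes the uniqueness half transparent from the definition alone; the paper's version has the pedagogical advantage of setting up the proximal point iteration $x^{k+1} = \OJ_{\alpha\OF}(x^k)$ as the natural algorithm for computing the zero whose existence the lemma asserts. Either proof is acceptable here.
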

\begin{proof}
	We have that $\zero(\OF) = \fixed(\OJ_{\alpha\OF})$ for $\alpha > 0$. By Lemma~\ref{lemma:contractionResolvent}, we have that $\Lip(\OJ_{\alpha\OF}) \leq 1/(1+\alpha c) < 1$, where $c>0$ is the monotonicity parameter of $\OF$. Then by the Banach fixed point theorem, $\OJ_{\alpha\OF}$ has a unique fixed point and thus $\zero(\OF)$ is a singleton.
\end{proof}
Alternatively, if $\OF$ is continuous and monotone, then we study fixed points of the nonexpansive map $\OJ_{\alpha\OF}$, which may or may not exist and may or may not be unique. In what follows, we will study the case where it is known a priori that zeros of $\OF$ exist but need not be unique.

We show that the most known algorithms for finding zeros of monotone operators on Hilbert spaces (see, e.g.,~\citep{EKR-SB:16})
can be generalized to non-Euclidean monotone operators using our framework and, furthermore, explicitly estimate the convergence rate of these methods.

\subsection{The Forward Step Method}

\begin{algo}[Forward step method]\label{alg:fwdstep}
	The \emph{forward step method} corresponds to the fixed point iteration
	\begin{equation}\label{eq:forwardstep}
		x^{k+1} = \OS_{\alpha\OF}(x^{k}) = x^{k} - \alpha \OF(x^{k}).
	\end{equation}
\end{algo}

\begin{theorem}[Convergence guarantees for the forward step method]\label{thm:fwdstep}
	Let $\map{\OF}{\real^{n}}{\real^{n}}$ is Lipschitz \change{continuous} with constant $\ell$ w.r.t.\ a norm $\|\cdot\|$ and let $x^0 \in \real^{n}$ be arbitrary.
	\begin{enumerate}[(i)]
		\item\label{fwdstepgeneral} Suppose $\OF$ is strongly monotone w.r.t.\ $\|\cdot\|$ with monotonicity parameter $c > 0$. Then the iteration~\eqref{eq:forwardstep} \change{converges} to the unique zero, $x^{*}$, of $\OF$ for every $\alpha \in {]0, \alpha^{*}[}$. Moreover, for every $k \in \mathbb{Z}_{\geq 0}$,
		\begin{equation*}
			\|x^{k+1} - x^{*}\| \leq (e^{-\alpha c} + e^{\alpha \ell} - 1 - \alpha\ell)\|x^{k}-x^{*}\|,
		\end{equation*}
		where $\alpha^{*}$ is the unique positive value of $\alpha$ that satisfies $e^{-\alpha^{*} c} + e^{\alpha^{*} \ell} = 2 + \alpha^{*}\ell$.
		\item\label{fwdstep1inf} Alternatively suppose $\|\cdot\|$ is a diagonally-weighted $\ell_{1}$ or $\ell_{\infty}$ norm and $\OF$ is strongly monotone w.r.t.\ $\|\cdot\|$ with monotonicity parameter $c > 0$. Then the iteration~\eqref{eq:forwardstep} converges to the unique zero, $x^{*}$, of $\OF$ for every $\alpha \in {]0,\frac{1}{\diagL(\OF)}]}$. Moreover, for every $k \in \mathbb{Z}_{\geq 0}$,
		\begin{equation*}
			\|x^{k+1} - x^{*}\| \leq (1 - \alpha c)\|x^{k}-x^{*}\|,
		\end{equation*}
		with the convergence rate optimized at $\alpha = 1/\diagL(\OF)$.
		\item\label{fwdstepmonotone} Alternatively suppose $\|\cdot\|$ is a diagonally weighted $\ell_{1}$ or $\ell_{\infty}$ norm and $\OF$ is monotone w.r.t.\ $\|\cdot\|$. Then $\zero(\OF) \neq \emptyset$ implies the iteration~\eqref{eq:forwardstep} converges to an element of $\zero(\OF)$ for every $\alpha \in {]0,\frac{1}{\diagL(\OF)}[}.$
	\end{enumerate}
\end{theorem}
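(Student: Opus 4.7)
The key observation is that the forward step iteration $x^{k+1} = \OS_{\alpha \OF}(x^k)$ is precisely the Picard iteration applied to the operator $\OS_{\alpha\OF}$, whose fixed points coincide with $\zero(\OF)$. Parts~\ref{fwdstepgeneral} and~\ref{fwdstep1inf} will follow by invoking Lemma~\ref{lemma:fwdstepLip} to establish that $\OS_{\alpha\OF}$ is a strict contraction for the stated ranges of $\alpha$, and then applying the Banach fixed-point theorem (with its standard geometric convergence rate). For part~\ref{fwdstep1inf}, Lemma~\ref{lemma:fwdstepLip}\ref{1infnormfwdstep} directly gives $\Lip(\OS_{\alpha\OF}) \leq 1 - \alpha c < 1$ for $\alpha \in {]0, 1/\diagL(\OF)]}$, yielding the claimed rate; the rate $1 - \alpha c$ is monotonically decreasing in $\alpha$, so it is optimized at the endpoint $\alpha = 1/\diagL(\OF)$.

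For part~\ref{fwdstepgeneral}, I will study the scalar function $h(\alpha) := e^{-\alpha c} + e^{\alpha \ell} - 2 - \alpha \ell$. A short computation shows $h(0)=0$, $h'(0) = -c < 0$, and $h''(\alpha) > 0$, so $h$ is strictly convex and initially strictly decreasing, with $h(\alpha)\to+\infty$ as $\alpha \to \infty$. Consequently $h$ has a unique positive root $\alpha^*$, and $h(\alpha) < 0$ precisely for $\alpha \in {]0,\alpha^*[}$. This is exactly the condition under which the upper bound from Lemma~\ref{lemma:fwdstepLip}\ref{generalnormfwdstep} is strictly less than one, so $\OS_{\alpha\OF}$ is a contraction with factor $e^{-\alpha c}+ e^{\alpha \ell} - 1 - \alpha \ell$, and Banach's theorem concludes the argument.

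Part~\ref{fwdstepmonotone} is the main obstacle, since for merely monotone $\OF$ the forward step operator is only nonexpansive and Picard iteration need not converge. The plan is to recast~\eqref{eq:forwardstep} as a \KM iteration: for any $\alpha \in {]0,1/\diagL(\OF)[}$, set $\theta := \alpha\diagL(\OF) \in {]0,1[}$ and define the operator $\OT := \OS_{(\alpha/\theta)\OF}$; since $\alpha/\theta = 1/\diagL(\OF)$, Lemma~\ref{lemma:fwdstepLip}\ref{1infnormfwdstep} ensures $\OT$ is nonexpansive, and a direct computation gives $x^{k+1} = (1-\theta)x^k + \theta \OT(x^k)$ with $\fixed(\OT) = \zero(\OF) \neq \emptyset$. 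Lemma~\ref{lemma:KMconvergence} then yields asymptotic regularity $\|x^k - \OT(x^k)\| \to 0$. To upgrade this to convergence of the sequence itself I will use a Fej\'er-monotonicity argument: for each $x^* \in \zero(\OF)$, nonexpansiveness of $\OT$ combined with the convex combination structure gives $\|x^{k+1} - x^*\| \leq \|x^k - x^*\|$, so $\{\|x^k - x^*\|\}$ is non-increasing and $\{x^k\}$ is bounded. Any cluster point $y$ of $\{x^k\}$ satisfies $y = \OT(y)$ by asymptotic regularity and continuity of $\OT$, hence $y \in \zero(\OF)$; applying Fej\'er-monotonicity with $x^* = y$ then forces the whole sequence $\{\|x^k - y\|\}$ to descend to its subsequential limit $0$, giving $x^k \to y$. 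The delicate point here, and the one I expect to require the most care, is the transition from asymptotic regularity to full sequence convergence in a finite-dimensional non-Hilbert space, which is exactly where Fej\'er-monotonicity under the chosen non-Euclidean norm becomes essential.
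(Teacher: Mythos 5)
Your proposal is correct and follows essentially the same route as the paper: parts~\ref{fwdstepgeneral} and~\ref{fwdstep1inf} via Lemma~\ref{lemma:fwdstepLip} and the Banach fixed-point theorem, and part~\ref{fwdstepmonotone} by rewriting the forward step as a \KM iteration of the nonexpansive operator $\OS_{\tilde\alpha\OF}$ with $\tilde\alpha = 1/\diagL(\OF)$ and invoking Lemma~\ref{lemma:KMconvergence}. Your Fej\'er-monotonicity argument upgrading asymptotic regularity to convergence of the full sequence is a welcome addition, since the paper's proof of part~\ref{fwdstepmonotone} leaves that final step implicit (Lemma~\ref{lemma:KMconvergence} as stated only gives $\|x^k - \OT(x^k)\|\to 0$).
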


\begin{proof}
	Regarding statement~(\ref{fwdstepgeneral}), from Lemma~\ref{lemma:fwdstepLip}(\ref{fwdstepgeneral}), we have that $\Lip(\OS_{\alpha\OF}) \leq e^{-\alpha c} + e^{\alpha \ell} - 1 - \alpha \ell$. It is straightforward to compute that at $\alpha = \alpha^{*}$, $\Lip(\OS_{\alpha\OF}) \leq 1$ and for $\alpha \in {]0,\alpha^{*}[}$ we have that $\Lip(\OS_{\alpha\OF}) < 1$. Thus, $\OS_{\alpha\OF}$ is a contraction and fixed points of $\OS_{\alpha\OF}$ correspond to zeros of $\OF$. Then by the Banach fixed point theorem, the result follows.
	
	Regarding statement~(\ref{fwdstep1inf}), Lemma~\ref{lemma:fwdstepLip}(\ref{fwdstep1inf}) implies that $\Lip(\OS_{\alpha\OF}) = 1-\alpha c < 1$ for all $\alpha \in {]0,1/\diagL(\OF)]}$. The result is then a consequence of the Banach fixed point theorem.
	
	Regarding statement~(\ref{fwdstepmonotone}), since $\OF$ is monotone w.r.t.\ a diagonally weighted $\ell_{1}$ or $\ell_{\infty}$ norm, $\OS_{\alpha\OF}$ is nonexpansive for $\alpha \in {]0,1/\diagL(\OF)]}$ by Lemma~\ref{lemma:fwdstepLip}(\ref{1infnormfwdstep}). Moreover, for every $\alpha \in {]0,1/\diagL(\OF)[}$, there exists $\theta \in {]0,1[}$ such that
	$\OS_{\alpha\OF} = (1-\theta) \Id + \theta\OS_{\tilde{\alpha}\OF},$
	for some $\tilde{\alpha} \in {]0,1/\diagL(\OF)]}$. Therefore the iteration~\eqref{eq:forwardstep} is the \KM iteration of the nonexpansive operator $\OS_{\tilde{\alpha}\OF}$ and Lemma~\ref{lemma:KMconvergence} implies the result.
\end{proof}
%\sashatodo{No need to average in the above. The forward step method is always averaged.}

Note that Theorem~\ref{thm:fwdstep}(\ref{fwdstepmonotone}) is a direct \change{consequence} of the fact that the forward step operator is nonexpansive for suitable $\alpha > 0$ when the mapping is monotone w.r.t.\ a diagonally-weighted $\ell_{1}$ or $\ell_{\infty}$ norm, a fact which need not hold when the mapping is monotone w.r.t.\ a different norm, e.g., a Hilbert one. \change{See the relevant discussion in Remark~\ref{rmk:nonexpansive-fwdstep} for an example of a mapping, $\OF$, which is monotone with respect to the $\ell_{2}$ norm but $\OS_{\alpha\OF}$ is not nonexpansive for any $\alpha > 0$.}
%Let $\alpha > 0$. As was stated previously, $\OF(x) = 0$ if and only if $x = \OR_{\OF}(x)$ and $x = \OC_{\OF}(x)$.
\subsection{The Proximal Point Method}

\begin{algo}[Proximal point method]\label{alg:proximal}
	The \emph{proximal point method} corresponds to the fixed point iteration
	\begin{equation}\label{eq:resolventIteration}
		x^{k+1} = \OJ_{\alpha\OF}(x^{k}) = (\Id + \alpha\OF)^{-1}(x^{k}).
	\end{equation}
\end{algo}

\begin{theorem}[Convergence guarantees for the proximal point method]\label{thm:proximalconvergence}
	Suppose $\map{\OF}{\real^{n}}{\real^{n}}$ is continuous and let $x^0 \in \real^{n}$ be arbitrary.
	\begin{enumerate}[(i)]
		\item\label{proximalStrong} Suppose $\OF$ is strongly monotone w.r.t.\ a norm $\|\cdot\|$ with monotonicity parameter $c > 0$. Then the iteration~\eqref{eq:resolventIteration} converges to the unique zero, $x^{*}$, of $\OF$ for every $\alpha \in {]0, \infty[}$. Moreover, for every $k \in \mathbb{Z}_{\geq 0}$, the iteration satisfies
		\begin{equation*}
			\|x^{k+1} - x^{*}\| \leq \frac{1}{1+\alpha c}\|x^{k}-x^{*}\|.
		\end{equation*}
		%\item\label{proximalAveraged} monotone with respect to a norm $\|\cdot\|$. Then the averaged iterations~\eqref{eq:averagedResolvent}
		%converge to an element of $\zero(\OF)$ for every $\alpha \in {]0,\infty[}$ provided $\zero(\OF) \neq \emptyset$.
		\item\label{proximalLipschitz} Alternatively suppose $\OF$ is monotone and globally Lipschitz \change{continuous} w.r.t.\ a diagonally weighted $\ell_{1}$ or $\ell_{\infty}$ norm $\|\cdot\|$ and $\diagL(\OF) \neq 0$. Then if $\zero(\OF) \neq \emptyset$, the iteration~\eqref{eq:resolventIteration} converges to an element of $\zero(\OF)$ for every $\alpha \in {]0,\infty[}$.
	\end{enumerate}
\end{theorem}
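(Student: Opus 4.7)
Part~\ref{proximalStrong} is a direct Banach contraction argument: Theorem~\ref{thm:minty} ensures $\dom(\OJ_{\alpha\OF}) = \real^n$, and Lemma~\ref{lemma:contractionResolvent} gives $\Lip(\OJ_{\alpha\OF}) \leq 1/(1+\alpha c) < 1$, making $\OJ_{\alpha\OF}$ a strict contraction whose fixed point set $\fixed(\OJ_{\alpha\OF})$ coincides with $\zero(\OF)$ (as recorded in Section~\ref{subsec:key-operators}). The Banach fixed-point theorem then delivers both the unique zero $x^*$ and the geometric rate $1/(1+\alpha c)$.

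Part~\ref{proximalLipschitz} requires more care because $\OJ_{\alpha\OF}$ is only nonexpansive. The key observation is that the proximal iteration is the \KM iteration of the reflected resolvent with relaxation $\theta = 1/2$, since $\OJ_{\alpha\OF} = \tfrac{1}{2}\Id + \tfrac{1}{2}\OR_{\alpha\OF}$. For $\alpha \in {]0, 1/\diagL(\OF)]}$, Lemma~\ref{lemma:CayleyLip}(ii) yields that $\OR_{\alpha\OF}$ is nonexpansive, and since $\fixed(\OR_{\alpha\OF}) = \zero(\OF) \neq \emptyset$, Lemma~\ref{lemma:KMconvergence} applied with $\theta = 1/2$ gives the asymptotic regularity
\[
\|x^{k+1} - x^k\| \;=\; \tfrac{1}{2}\|x^k - \OR_{\alpha\OF}(x^k)\| \;\to\; 0.
\]
Nonexpansiveness of $\OJ_{\alpha\OF}$ implies Fej\'er monotonicity: $\|x^k - x^*\|$ is non-increasing for every $x^* \in \zero(\OF)$, so $\{x^k\}$ is bounded and, by finite-dimensional compactness, admits a convergent subsequence $x^{k_j} \to \bar x$. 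Continuity of $\OJ_{\alpha\OF}$ together with asymptotic regularity forces $\OJ_{\alpha\OF}(\bar x) = \bar x$, i.e.\ $\bar x \in \zero(\OF)$; Fej\'er monotonicity relative to this particular $\bar x$ then upgrades subsequential convergence to full convergence $x^k \to \bar x$.

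The main obstacle is the case $\alpha > 1/\diagL(\OF)$, where $\OR_{\alpha\OF}$ need not be nonexpansive (cf.\ Example~\ref{example:linear}), so the decomposition argument above no longer applies directly. The planned workaround is to invoke the resolvent identity
\[
\OJ_{\alpha\OF}(x) \;=\; \OJ_{\tilde\alpha\OF}\big(\tfrac{\tilde\alpha}{\alpha}\, x + (1 - \tfrac{\tilde\alpha}{\alpha})\,\OJ_{\alpha\OF}(x)\big),
\]
which holds for any $\tilde\alpha \in {]0, 1/\diagL(\OF)]}$ (verified by substituting $y = \OJ_{\alpha\OF}(x)$ and using $x = y + \alpha\OF(y)$), to rewrite a step of the outer iteration as an implicit relaxation involving $\OJ_{\tilde\alpha\OF}$. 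The strategy is then to transfer the Fej\'er monotonicity and asymptotic regularity established in the small-$\alpha$ case to the outer iterates, yielding the same cluster-point/Fej\'er conclusion for arbitrary $\alpha > 0$.
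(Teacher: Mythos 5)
Part~(i) of your proposal is correct and identical to the paper's argument. For part~(ii), your treatment of the range $\alpha \in {]0, 1/\diagL(\OF)]}$ is also sound: the identity $\OJ_{\alpha\OF} = \tfrac{1}{2}\Id + \tfrac{1}{2}\OR_{\alpha\OF}$, nonexpansiveness of $\OR_{\alpha\OF}$ on that range, asymptotic regularity from the \KM lemma, and the Fej\'er-monotonicity/compactness upgrade to full convergence together give a complete proof there (you are in fact more explicit than the paper about the last step). The problem is the case $\alpha > 1/\diagL(\OF)$, which the theorem explicitly covers ("for every $\alpha \in {]0,\infty[}$") and which your proposal leaves as a sketch. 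The resolvent identity you invoke writes $\OJ_{\alpha\OF}(x) = \OJ_{\tilde\alpha\OF}\bigl(\tfrac{\tilde\alpha}{\alpha}x + (1-\tfrac{\tilde\alpha}{\alpha})\OJ_{\alpha\OF}(x)\bigr)$, but the inner argument depends on the output $\OJ_{\alpha\OF}(x)$ itself, so the Picard iterates of $\OJ_{\alpha\OF}$ do not become a \KM (or even a well-structured perturbed) iteration of $\OJ_{\tilde\alpha\OF}$; I do not see how "transferring Fej\'er monotonicity to the outer iterates" would be carried out, and as written this is a genuine gap, not a routine verification.

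The paper closes this gap by generalizing exactly the identity you used, replacing the fixed relaxation $\theta = 1/2$ by a variable one. Setting $\overline{\OR}_{\alpha\OF}^\theta := \tfrac{1}{\theta}\OJ_{\alpha\OF} - \tfrac{1-\theta}{\theta}\Id$, a short computation gives $\overline{\OR}_{\alpha\OF}^\theta = \OS_{\frac{1-\theta}{\theta}\alpha\OF}\circ\OJ_{\alpha\OF}$ (your case is $\theta = 1/2$, where this reduces to $\OR_{\alpha\OF} = \OS_{\alpha\OF}\circ\OJ_{\alpha\OF}$). For any fixed $\alpha > 0$ one then chooses $\theta \in {]0,1[}$ so that the effective forward step $\tfrac{1-\theta}{\theta}\alpha$ lies in ${]0, 1/\diagL(\OF)]}$; Lemmas~\ref{lemma:fwdstepLip}\ref{1infnormfwdstep} and~\ref{lemma:contractionResolvent} then make $\overline{\OR}_{\alpha\OF}^\theta$ nonexpansive, $\OJ_{\alpha\OF} = (1-\theta)\Id + \theta\,\overline{\OR}_{\alpha\OF}^\theta$ exhibits the proximal iteration as a \KM iteration of a nonexpansive map sharing the fixed-point set $\zero(\OF)$, and your Fej\'er argument finishes the proof verbatim. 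So the missing ingredient is not a new technique but the observation that the averaging parameter, rather than the resolvent parameter, should be adapted to $\alpha$.
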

\begin{proof}
	Regarding statement~(\ref{proximalStrong}), Lemma~\ref{lemma:contractionResolvent} provides the Lipschitz estimate $\Lip(\OJ_{\alpha\OF}) \leq \frac{1}{1+\alpha c} < 1$ for all $\alpha > 0$. Thus $\OJ_{\alpha\OF}$ is a contraction and since fixed points of $\OJ_{\alpha\OF}$ correspond to zeros of $\OF$, the Banach fixed point theorem implies the result.
	
	Regarding statement~(\ref{proximalLipschitz}), we will demonstrate that the iteration~\eqref{eq:resolventIteration} is a \KM iteration of a suitably-defined nonexpansive mapping. To this end, let $\theta \in {]0,1[}$ be arbitrary and consider the auxiliary mapping $\map{\overline{\OR}_{\alpha\OF}^{\theta}}{\real^{n}}{\real^{n}}$ given by $\overline{\OR}_{\alpha\OF}^{\theta} := \frac{\OJ_{\alpha\OF}}{\theta} - \frac{1 - \theta}{\theta} \Id.$
	Then it is straightforward to compute
	\begin{equation*}
		\begin{aligned}
			\overline{\OR}_{\alpha\OF}^{\theta} &= \frac{(\Id + \alpha \OF)^{-1}}{\theta} - \frac{1 - \theta}{\theta}(\Id + \alpha \OF) \circ (\Id + \alpha \OF)^{-1} \\
			&= \Big(\frac{\Id}{\theta} - \frac{1 - \theta}{\theta}(\Id + \alpha\OF)\Big) \circ (\Id + \alpha \OF)^{-1} = \Big(\Id - \frac{(1 - \theta)\alpha}{\theta}\OF\Big) \circ \OJ_{\alpha\OF} = \OS_{\frac{1-\theta}{\theta}\alpha \OF} \circ \OJ_{\alpha\OF}.
		\end{aligned}
	\end{equation*}
	Moreover, $\OJ_{\alpha\OF}$ is nonexpansive by Lemma~\ref{lemma:contractionResolvent}, and by Lemma~\ref{lemma:fwdstepLip}(\ref{1infnormfwdstep}),
	$\ds\Lip(\OS_{\frac{1-\theta}{\theta}\alpha \OF}) \leq 1,$ for all $\alpha \in {]0, \frac{1 - \theta}{\theta\diagL(\OF)}]}.$
	We conclude that $\Lip(\overline{\OR}_{\alpha\OF}^{\theta}) \leq \Lip(\OS_{\frac{1-\theta}{\theta}\alpha \OF})\Lip(\OJ_{\alpha\OF}) \leq 1$ for $\alpha \in {]0, \frac{1 - \theta}{\theta\diagL(\OF)}]}$ which implies that $\overline{\OR}_{\OF}^{\theta}$ is nonexpansive for all $\alpha$ in this range.
	
	Let $\alpha > 0$ be arbitrary. Then for any\footnote{Note that $\frac{1}{1 + \alpha\diagL(\OF)} \in {]0,1[}$ holds under the assumption $\diagL(\OF) \neq 0$ since $\diagL(\OF) \geq 0$ for any monotone $\OF$.}
	$\theta \leq \frac{1}{1 + \alpha\diagL(\OF)} \in {]0,1[},$
	we have that
	$\OJ_{\alpha\OF} = (1 - \theta)\Id + \theta \overline{\OR}_{\alpha\OF}^{\theta},$
	and $\overline{\OR}_{\OF}^{\theta}$ is nonexpansive since $\alpha \in {]0, \frac{1 - \theta}{\theta\diagL(\OF)}]}$. Thus, the iteration~\eqref{eq:resolventIteration} is the \KM iteration for $\overline{\OR}_{\alpha\OF}^{\theta}$ and the result follows from Lemma~\ref{lemma:KMconvergence}.
\end{proof}

\begin{remark}
	Theorem~\ref{thm:proximalconvergence}(\ref{proximalLipschitz}) is an analog of the classical result in monotone operator theory on Hilbert spaces which states that the resolvent of a maximally monotone operator is firmly nonexpansive~\change{\citep{GJM:62} and}~\citep[Prop.~23.8]{HHB-PLC:17}. 
	This firm nonexpansiveness is a consequence of the fact that the reflected resolvent of a maximally monotone operator with respect to a Euclidean norm is always nonexpansive and $\OJ_{\alpha\OF} = \frac{1}{2}\Id + \frac{1}{2}\OR_{\alpha\OF}$. Note that this property need not hold when $\OF$ is monotone with respect to a non-Euclidean norm but we are able to show that in the case of diagonally-weighted $\ell_{1}/\ell_{\infty}$ norms, a similar result holds.
\end{remark}

\subsection{The Cayley Method}

\begin{algo}[Cayley method]\label{alg:Cayley}
	The \emph{Cayley method} corresponds to the iteration
	\begin{equation}\label{eq:CayleyIteration}
		x^{k+1} = \OR_{\alpha\OF}(x^{k}) = 2(\Id + \alpha\OF)^{-1}(x^{k}) - x^{k}.
	\end{equation}
\end{algo}	
%\sashatodo{I would like to better understand the role of the Lipschitz assumption on $\OF$. Can Algorithm~\ref{alg:Cayley} converge under local Lipschitzness by varying the step size?}

\begin{theorem}[Convergence guarantees for the Cayley method]\label{thm:Cayley-method}
	Suppose $\map{\OF}{\real^{n}}{\real^{n}}$ is Lipschitz \change{continuous} with constant $\ell$ w.r.t.\ a norm $\|\cdot\|$ and let $x^0 \in \real^{n}$ be arbitrary.
	\begin{enumerate}[(i)]
		\item\label{Cayleygeneral} Suppose $\OF$ is strongly monotone w.r.t.\ $\|\cdot\|$ with monotonicity parameter $c > 0$. Then the iteration~\eqref{eq:CayleyIteration} converges to the unique zero, $x^{*}$, of $\OF$ for sufficiently small $\alpha > 0$. Moreover, for every $k \in \mathbb{Z}_{\geq 0}$, the iteration satisfies
		\begin{equation*}
			\|x^{k+1} - x^{*}\| \leq \frac{e^{-\alpha c} + e^{\alpha\ell} - 1 - \alpha\ell}{1 + \alpha c}\|x^{k}-x^{*}\|.
		\end{equation*}
		\item\label{Cayley1inf} Alternatively suppose $\|\cdot\|$ is a diagonally weighted $\ell_{1}$ or $\ell_{\infty}$ norm and $\OF$ is strongly monotone w.r.t.\ $\|\cdot\|$ with monotonicity parameter $c > 0$. Then the iteration~\eqref{eq:CayleyIteration} converges to the unique zero, $x^{*}$, of $\OF$ for every $\alpha \in {]0,\frac{1}{\diagL(\OF)}]}$. Moreover, for every $k \in \mathbb{Z}_{\geq 0}$, the iteration satisfies
		\begin{equation*}
			\|x^{k+1} - x^{*}\| \leq \frac{1 - \alpha c}{1 + \alpha c}\|x^{k}-x^{*}\|,
		\end{equation*}
		with the convergence rate being optimized at $\alpha = 1/\diagL(\OF)$.
		\item\label{CayleyAveraged} Alternatively suppose $\|\cdot\|$ is a diagonally weighted $\ell_{1}$ or $\ell_{\infty}$ norm and $\OF$ is monotone w.r.t.\ $\|\cdot\|$. Then if $\zero(\OF) \neq \emptyset$, the \KM iteration with $\theta = 1/2$
		\begin{equation*}
			x^{k+1} = \frac{1}{2}x^{k} + \frac{1}{2}\OR_{\alpha\OF}(x^{k})
		\end{equation*}
		correspond to the proximal point iteration~\eqref{eq:resolventIteration}, which is guaranteed to converge to an element of $\zero(\OF)$ for every $\alpha \in {]0,\infty[}$.
	\end{enumerate}
\end{theorem}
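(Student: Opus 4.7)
The plan is to prove each of the three statements as an essentially direct corollary of the Lipschitz estimates established in Lemma~\ref{lemma:CayleyLip}, by either invoking the Banach fixed point theorem or by rewriting the iteration as a \KM or proximal iteration and appealing to Theorem~\ref{thm:proximalconvergence} and Lemma~\ref{lemma:KMconvergence}. Throughout, I will use the fact that $\fixed(\OR_{\alpha\OF}) = \zero(\OF)$ for every $\alpha>0$.

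For statement~\ref{Cayleygeneral}, I would first apply Lemma~\ref{lemma:CayleyLip}\ref{LipCayleyGeneral} to obtain $\Lip(\OR_{\alpha\OF}) \leq (e^{-\alpha c}+e^{\alpha\ell}-1-\alpha\ell)/(1+\alpha c)$. It remains to show this bound is strictly less than one for sufficiently small $\alpha>0$, which is the mildly technical point in this statement. Since the denominator is positive, I multiply through and it suffices to check that $\varphi(\alpha) := e^{-\alpha c}+e^{\alpha\ell}-2-\alpha\ell-\alpha c<0$ for small $\alpha>0$. Since $\varphi(0)=0$ and $\varphi'(0)=-c+\ell-\ell-c=-2c<0$, continuity gives an interval ${]0,\alpha_0[}$ on which $\varphi<0$. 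On this interval, $\OR_{\alpha\OF}$ is a contraction, and the Banach fixed point theorem yields convergence of~\eqref{eq:CayleyIteration} to the unique zero $x^*$ with the stated geometric rate.

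For statement~\ref{Cayley1inf}, I again invoke Lemma~\ref{lemma:CayleyLip}, this time part~\ref{LipCayley1inf}, to obtain $\Lip(\OR_{\alpha\OF}) \leq (1-\alpha c)/(1+\alpha c)$ for all $\alpha\in{]0, 1/\diagL(\OF)]}$. Since $c>0$, this bound is strictly less than one on the whole interval, so $\OR_{\alpha\OF}$ is a contraction and the Banach fixed point theorem delivers both convergence to the unique zero of $\OF$ and the claimed rate estimate. To identify the optimal step size, I observe that the map $\alpha\mapsto (1-\alpha c)/(1+\alpha c)$ is strictly decreasing on $\realpositive$, so the rate is minimized at the right endpoint $\alpha=1/\diagL(\OF)$.

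For statement~\ref{CayleyAveraged}, the key observation is purely algebraic: substituting $\OR_{\alpha\OF} = 2\OJ_{\alpha\OF}-\Id$ into the \KM iteration with $\theta=1/2$ gives
\begin{equation*}
x^{k+1} = \tfrac{1}{2}x^k + \tfrac{1}{2}(2\OJ_{\alpha\OF}(x^k)-x^k) = \OJ_{\alpha\OF}(x^k),
\end{equation*}
so the iteration coincides exactly with the proximal point iteration~\eqref{eq:resolventIteration}. Under the monotonicity hypothesis with respect to a diagonally-weighted $\ell_1$ or $\ell_\infty$ norm and $\zero(\OF)\neq\emptyset$, Theorem~\ref{thm:proximalconvergence}\ref{proximalLipschitz} then gives convergence to an element of $\zero(\OF)$ for every $\alpha\in{]0,\infty[}$ (treating separately the trivial case $\diagL(\OF)=0$, in which $\OF$ is constant and any zero of $\OF$ is reached in a single step). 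There is no genuine obstacle in this proof: the heavy lifting was already done in Lemma~\ref{lemma:CayleyLip} and Theorem~\ref{thm:proximalconvergence}, and the only point requiring care is the elementary calculus verification of $\varphi(\alpha)<0$ in part~\ref{Cayleygeneral}.
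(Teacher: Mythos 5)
Your proposal is correct and follows essentially the same route as the paper: Lemma~\ref{lemma:CayleyLip} plus the Banach fixed point theorem for parts~\ref{Cayleygeneral} and~\ref{Cayley1inf}, and the algebraic identity $\tfrac12\Id+\tfrac12\OR_{\alpha\OF}=\OJ_{\alpha\OF}$ plus Theorem~\ref{thm:proximalconvergence}\ref{proximalLipschitz} for part~\ref{CayleyAveraged}. You in fact supply two small details the paper leaves implicit --- the calculus check that $e^{-\alpha c}+e^{\alpha\ell}-1-\alpha\ell<1+\alpha c$ for small $\alpha$, and the degenerate case $\diagL(\OF)=0$ excluded by the hypotheses of Theorem~\ref{thm:proximalconvergence}\ref{proximalLipschitz} --- both of which are handled correctly.
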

\begin{proof}
	Regarding statement~(\ref{Cayleygeneral}), from Lemma~\ref{lemma:CayleyLip}(\ref{LipCayleyGeneral}), we have that $\Lip(\OR_{\alpha\OF}) \leq (e^{-\alpha c} + e^{\alpha\ell} - 1 - \alpha\ell)/(1+\alpha c)$ which is less than unity for small enough $\alpha > 0$. Thus, for small enough $\alpha$, $\OR_{\alpha\OF}$ is a contraction and fixed points of $\OR_{\alpha\OF}$ correspond to zeros of $\OF$. Thus, by the Banach fixed point theorem, the result follows.
	
	Regarding statement~(\ref{Cayley1inf}), Lemma~\ref{lemma:CayleyLip}(\ref{LipCayley1inf}) implies that $\Lip(\OR_{\alpha\OF}) \leq (1-\alpha c)/(1+\alpha c) < 1$ for $\alpha \in {]0,1/\diagL(\OF)]}$. The result is then a consequence of the Banach fixed point theorem.
	
	Statement~(\ref{CayleyAveraged}) holds since
	\change{$\frac{1}{2}\Id + \frac{1}{2}(2\OJ_{\alpha\OF} - \Id) = \OJ_{\alpha\OF}$},
	and convergence follows by Theorem~\ref{thm:proximalconvergence}(ii) since $\zero(\OF) \neq \emptyset$.
\end{proof}

%\begin{remark}
%	Note that both the proximal point algorithm and the Cayley iteration are both implicit methods.
%\end{remark}

% Please add the following required packages to your document preamble:
% \usepackage{multirow}
\begin{table}[]
	\centering
	\resizebox{.999\textwidth}{!}{	\begin{tabular}{|c|c|c|c|c|c|c|}
			\hline
			\multirow{3}{*}{Algorithm, Iterated map} & \multicolumn{6}{c|}{$\OF$ strongly monotone and globally Lipschitz continuous} \\ \cline{2-7}
			& \multicolumn{2}{c|}{$\ell_{2}$} & \multicolumn{2}{c|}{General norm} & \multicolumn{2}{c|}{Diagonally weighted $\ell_{1}$ or $\ell_{\infty}$} \\ \cline{2-7}
			& $\alpha$ range & Optimal $\Lip$ & $\alpha$ range & Optimal $\Lip$ & $\alpha$ range & Optimal $\Lip$ \\ \hline
			& & & & & & \\[-0.3em]
			Forward step, $\OS_{\alpha\OF}$ & $\ds{\Big]0, \frac{2c}{\ell^2}\Big[}$ & $\ds1-\frac{1}{2\kappa^2} + \bigO\Big(\frac{1}{\kappa^3}\Big)$ & $\ds{]0, \alpha^{*}[}$ & $\ds1-\frac{1}{2\kappa^2} + \bigO\Big(\frac{1}{\kappa^3}\Big)$ & $\ds{\Big]0, \frac{1}{\diagL(\OF)}\Big]}$ & $\ds1-\frac{1}{\kappa_{\infty}}$ \\ %\hline
			& & & & & & \\[-0.3em]
			Proximal point, $\OJ_{\alpha\OF}$ & ${]0, \infty[}$ & A.S. & ${]0, \infty[}$ & A.S. & ${]0, \infty[}$ & A.S. \\ %\hline
			& & & & & & \\[-0.3em]
			Cayley, $\OR_{\alpha\OF}$ & ${]0, \infty[}$ & $\ds1-\frac{1}{2\kappa}+\bigO\Big(\frac{1}{\kappa^2}\Big)$ & $\ds{]0, \alpha^{*}[}$ & $\ds1-\frac{2}{\kappa^2} + \bigO\Big(\frac{1}{\kappa^3}\Big)$ & $\ds{\Big]0, \frac{1}{\diagL(\OF)}\Big]}$ & $\ds1-\frac{2}{\kappa_{\infty}}+ \bigO\Big(\frac{1}{\kappa_{\infty}^2}\Big)$ \\ \hline
	\end{tabular}}
	\caption{Table of step size ranges and Lipschitz constants for three algorithms for finding zeros of monotone operators with respect to arbitrary norms. For $\OF$ strongly monotone \change{and Lipschitz continuous}, let $c$ be its monotonicity parameter (with respect to the appropriate norm), $\ell$ its appropriate Lipschitz constant, and $\diagL(\OF) := \sup_{x \in \real^{n} \setminus \Omega_{\OF}}\max_{i\in\until{n}} (\jac{\OF}(x))_{ii} \leq \ell$. Additionally, $\kappa := \ell/c \geq 1,$ $\kappa_{\infty} := \diagL(\OF)/c \in [1,\kappa],$ and $\alpha^{*}$ is the unique positive solution to $e^{-\alpha^{*} c} + e^{\alpha^{*} \ell} = 2 + \alpha^{*}\ell$. \emph{A.S.} means the Lipschitz constant can be made arbitrarily small. We do not assume that the strongly monotone $\OF$ is the gradient of a strongly convex function.
		%The optimal Lipschitz constant for the Cayley method under general norms is difficult to compute explicitly, but may be computed numerically.
	}\label{table:Lipschitz-stepsizes}
\end{table}

Table~\ref{table:Lipschitz-stepsizes} summarizes and compares the range of step sizes and Lipschitz estimates as provided by the classical monotone operator theory for the $\ell_{2}$ norm~\citep[pp.~16 and~20]{EKR-SB:16} and by Theorems~\ref{thm:fwdstep},~\ref{thm:proximalconvergence}, and~\ref{thm:Cayley-method} for general and diagonally-weighted $\ell_{1}/\ell_{\infty}$ norms. 
%\aptodo{Do I understand it correctly that the first column in table I is well-known? Can you refer to a specific theorem establishing these estimates?}
%\aptodo{Please check my computation of Lip for Cayley, it is very simple. If I differentiate (3.12), take the Taylor series for exponentials and look for the stationary point when $\kappa$ is very big, I get $\alpha=2c^{-1}\kappa^{-2}+O(\kappa^{-3})$, $\alpha c=2\kappa^{-2}+O(\kappa^{-3})$, $\alpha\ell\approx 2\kappa^{-1}+O(\kappa^{-2})$. Subsituting this to (3.12), I get asymptotics which is written in blue. As}

\section{Finding Zeros of a Sum of Non-Euclidean Monotone Operators}
In many instances, one may wish to execute the proximal point method, Algorithm~\ref{alg:proximal}, to compute a zero of a continuous monotone mapping $\map{\ON}{\real^{n}}{\real^{n}}$. However, the implementation of the iteration~\eqref{eq:resolventIteration} may be hindered by the difficulty in evaluating $\OJ_{\alpha \ON}$. To remedy this issue, it is often assumed that $\ON$ can be expressed as the sum of two monotone mappings $\OF$ and $\OG$ where $\OJ_{\alpha\OG}$ may be easy to compute and $\OF$ satisfies some regularity condition. Alternatively, in some situations, decomposing $\ON = \OF+ \OG$ and finding $x \in \real^{n}$ such that $(\OF + \OG)(x) = \vectorzeros[n]$ provides additional flexibility in choice of algorithm and may improve convergence rates.

Motivated by the above, we consider the problem of finding an $x \in \real^{n}$ such that
\begin{equation}
	(\OF+\OG)(x) = \vectorzeros[n],
\end{equation}
where $\map{\OF,\OG}{\real^{n}}{\real^{n}}$ are continuous and monotone w.r.t.\ a diagonally weighted $\ell_{1}$ or $\ell_{\infty}$ norm.\footnote{The results that follow can also be generalized to arbitrary norms using the Lipschitz estimates derived for $\OJ_{\alpha\OF}, \OR_{\alpha\OF},$ and $\OS_{\alpha\OF}$ in Section~\ref{subsec:key-operators}.} In particular, we focus on the forward-backward, Peaceman-Rachford, and Douglas-Rachford splitting algorithms. For some extensions of the theory to set-valued mappings, we refer to Section~\ref{subsec:set-valued}.

\subsection{Forward-Backward Splitting}

\begin{algo}[Forward-backward splitting]\label{alg:fwd-backwd}
	Assume $\alpha > 0$. Then in~\citep[Section~7.1]{EKR-SB:16} it is shown that
	\begin{equation*}
		(\OF + \OG)(x) = \vectorzeros[n] \quad \iff x = (\OJ_{\alpha\OG}\circ \OS_{\alpha\OF})(x).
	\end{equation*}
	The \emph{forward-backward splitting method} corresponds to the fixed point iteration
	\begin{equation}\label{eq:fwdbackward}
		x^{k+1} = \OJ_{\alpha\OG}(x^{k} - \alpha\OF(x^{k})).
	\end{equation}
	If both $\OF$ and $\OG$ are monotone, define the \emph{averaged forward-backward splitting iteration}
	\begin{equation}\label{eq:averagedfwdbackward}
		x^{k+1} = \frac{1}{2}x^{k} + \frac{1}{2}\OJ_{\alpha\OG}(x^{k} - \alpha \OF(x^{k})).
	\end{equation}
	%\sashatodo{\begin{enumerate}
			%		\item Compare to the case of $\OG$ strongly monotone and see which gives smaller contraction factor.
			%		\item Check range of step sizes.
			%	\end{enumerate} }
\end{algo}
\begin{theorem}[Convergence guarantees for forward-backward splitting]\label{thm:fwd-backwd}
	Let $\map{\OF}{\real^{n}}{\real^{n}}$ be Lipschitz \change{continuous} w.r.t.\ a diagonally weighted $\ell_{1}$ or $\ell_{\infty}$ norm $\|\cdot\|$, $\map{\OG}{\real^{n}}{\real^{n}}$ be continuous and monotone w.r.t.\ the same norm, and let $x^0 \in \real^{n}$ be arbitrary.
	\begin{enumerate}[(i)]
		\item\label{fwdbackwardOneMonotone} Suppose $\OF$ is strongly monotone w.r.t.\ $\|\cdot\|$ with monotonicity parameter $c > 0$, then the iteration~\eqref{eq:fwdbackward} converges to the unique zero, $x^{*}$, of $\OF+\OG$ for every $\alpha \in {]0, \frac{1}{\diagL(\OF)}]}$. Moreover, for every $k \in \mathbb{Z}_{\geq 0}$, the iteration satisfies
		\begin{equation*}
			\|x^{k+1} - x^{*}\| \leq (1 - \alpha c)\|x^{k}-x^{*}\|,
		\end{equation*}
		with the convergence rate being optimized at $\alpha = 1/\diagL(\OF)$.
		\item\label{fwdbackwardBothMonotone} If $\OF$ is monotone w.r.t.\ $\|\cdot\|$ and $\zero(\OF + \OG) \neq \emptyset$, then the iteration~\eqref{eq:averagedfwdbackward} converges to an element of $\zero(\OF + \OG)$ for every $\alpha \in {]0, \frac{1}{\diagL(\OF)}]}$.
	\end{enumerate}
\end{theorem}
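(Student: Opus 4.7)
The plan is to reduce both statements to properties of the composite map $\OT_{\alpha} := \OJ_{\alpha\OG} \circ \OS_{\alpha\OF}$ and then invoke either the Banach fixed point theorem or the \KM convergence result (Lemma~\ref{lemma:KMconvergence}), depending on whether I have strict contractivity or merely nonexpansiveness. The equivalence $\zero(\OF + \OG) = \fixed(\OT_{\alpha})$ is already recorded in Algorithm~\ref{alg:fwd-backwd}, so the entire argument rests on estimating $\Lip(\OT_\alpha)$ in the diagonally weighted $\ell_1$/$\ell_\infty$ setting.

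For statement~\ref{fwdbackwardOneMonotone}, I would proceed as follows. Since $\OF$ is strongly monotone with parameter $c > 0$ w.r.t.\ a diagonally weighted $\ell_1$ or $\ell_\infty$ norm, Lemma~\ref{lemma:fwdstepLip}(ii) supplies $\Lip(\OS_{\alpha\OF}) \leq 1 - \alpha c$ for every $\alpha \in {]0, 1/\diagL(\OF)]}$. Since $\OG$ is continuous and monotone, Lemma~\ref{lemma:contractionResolvent} gives $\Lip(\OJ_{\alpha\OG}) \leq 1$, and Theorem~\ref{thm:minty} guarantees that $\OJ_{\alpha\OG}$ is globally defined on $\real^n$. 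Submultiplicativity of Lipschitz constants under composition then yields $\Lip(\OT_\alpha) \leq 1 - \alpha c < 1$, so $\OT_\alpha$ is a contraction, and the Banach fixed point theorem delivers the unique fixed point $x^* \in \zero(\OF+\OG)$ together with the linear rate $\|x^{k+1} - x^*\| \leq (1 - \alpha c) \|x^k - x^*\|$. The optimum at $\alpha = 1/\diagL(\OF)$ is immediate since the bound is monotonically decreasing in $\alpha$ over the feasible interval.

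For statement~\ref{fwdbackwardBothMonotone}, $\OF$ is only monotone, so the same application of Lemma~\ref{lemma:fwdstepLip}(ii) with $c = 0$ now yields only $\Lip(\OS_{\alpha\OF}) \leq 1$; combined with $\Lip(\OJ_{\alpha\OG}) \leq 1$, this makes $\OT_\alpha$ nonexpansive for every $\alpha \in {]0, 1/\diagL(\OF)]}$. I would then recognize that the averaged iteration~\eqref{eq:averagedfwdbackward} is exactly the \KM iteration applied to $\OT_\alpha$ with relaxation parameter $\theta = 1/2$. The hypothesis $\zero(\OF + \OG) \neq \emptyset$ translates to $\fixed(\OT_\alpha) \neq \emptyset$, and Lemma~\ref{lemma:KMconvergence} therefore yields asymptotic regularity $\|x^k - \OT_\alpha(x^k)\| \to 0$ at the rate $\bigO(1/\sqrt{k})$, which is the convergence statement claimed (paralleling the interpretation used in Theorem~\ref{thm:fwdstep}\ref{fwdstepmonotone} and Theorem~\ref{thm:proximalconvergence}\ref{proximalLipschitz}).

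The only conceptually nontrivial point is that the Lipschitz bound on $\OS_{\alpha\OF}$ supplied by Lemma~\ref{lemma:fwdstepLip}(ii) does not require cocoercivity of $\OF$ but only Lipschitzness, which is precisely the improvement over the Hilbert-space case where forward-backward demands $\OF$ cocoercive. This is also what makes the restriction to diagonally weighted $\ell_1$/$\ell_\infty$ norms essential here: under a general norm, Remark~\ref{rmk:nonexpansive-fwdstep} shows that $\OS_{\alpha\OF}$ may fail to be nonexpansive even for merely monotone $\OF$, so $\OT_\alpha$ would not be nonexpansive and the \KM argument would break down. Once this is recognized, everything else is bookkeeping with the Lipschitz composition inequality.
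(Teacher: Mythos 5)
Your proposal is correct and follows essentially the same route as the paper: bound $\Lip(\OJ_{\alpha\OG}\circ\OS_{\alpha\OF})$ via Lemmas~\ref{lemma:fwdstepLip}(ii) and~\ref{lemma:contractionResolvent}, then apply the Banach fixed point theorem for statement~(i) and recognize~\eqref{eq:averagedfwdbackward} as the \KM iteration with $\theta = 1/2$ for statement~(ii), invoking Lemma~\ref{lemma:KMconvergence}. The additional observations about Theorem~\ref{thm:minty} and the role of the $\ell_1/\ell_\infty$ restriction are accurate but not needed beyond what the paper records.
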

\begin{proof}
	Regarding statement~(\ref{fwdbackwardOneMonotone}),~Lemmas~\ref{lemma:fwdstepLip}(\ref{1infnormfwdstep}) and~\ref{lemma:contractionResolvent} together imply that $\Lip(\OJ_{\alpha\OG} \circ \OS_{\alpha\OF}) \leq \Lip(\OJ_{\alpha\OG})\Lip(\OS_{\alpha\OF}) \leq 1-\alpha c < 1$ for all $\alpha \in {]0,1/\diagL(\OF)]}$. Then since $\fixed(\OJ_{\alpha\OF} \circ \OS_{\alpha\OF}) = \zero(\OF + \OG)$, the result is then a consequence of the Banach fixed point theorem.
	
	Statement~\ref{fwdbackwardBothMonotone} follows from $\Lip(\OJ_{\alpha\OG} \circ \OS_{\alpha\OF}) \leq 1$ and that the iteration~\eqref{eq:averagedfwdbackward} is the \KM iteration of the nonexpansive mapping $\OJ_{\alpha \OG} \circ \OS_{\alpha\OF}$ with $\theta = 1/2$.
\end{proof}

\begin{remark}[Comparison with standard forward-backward splitting convergence criteria]
	\begin{itemize}
		\item Compared to the Hilbert case, in the non-Euclidean setting, if both $\OF$ and $\OG$ are monotone, then iteration~\eqref{eq:averagedfwdbackward} must be applied to compute a zero of $\OF + \OG$. In the Hilbert case, iteration~\eqref{eq:fwdbackward} may be used instead since the composition of averaged operators is averaged. For non-Hilbert norms, the composition of two averaged operators need not be averaged.
		\item In monotone operator theory on Hilbert spaces, \change{Lipschitz continuity} of $\OF$ is not sufficient for the convergence of the iteration~\eqref{eq:fwdbackward}. Instead, \change{a standard sufficient condition for convergence is cocoercivity of $\OF$, see \citep{BM:80} and}~\citep[Theorem~26.14]{HHB-PLC:17}. In the case of diagonally-weighted $\ell_{1}/\ell_{\infty}$ norms, \change{Lipschitz continuity} is sufficient for convergence.
		This fact is due to the nonexpansiveness of $\OS_{\alpha\OF}$ for $\ell_{1}/\ell_{\infty}$ monotone $\OF$ and small enough $\alpha > 0$ as discussed in Remark~\ref{rmk:nonexpansive-fwdstep}.
	\end{itemize}
\end{remark}

\subsection{Peaceman-Rachford and Douglas-Rachford Splitting}

\begin{algo}[Peaceman-Rachford and Douglas-Rachford splitting]\label{alg:PR-DR}
	Let $\alpha > 0$. Then in~\citep[Section~7.3]{EKR-SB:16}, it is shown that
	\begin{equation}\label{eq:PRsplitting}
		(\OF + \OG)(x) = \vectorzeros[n] \quad \iff \quad (\OR_{\alpha\OF} \circ\OR_{\alpha\OG})(z) = z \text{ and } x = \OJ_{\alpha\OG} (z).
	\end{equation}
	The \emph{Peaceman-Rachford splitting method} corresponds to the fixed point iteration
	\begin{equation}\label{eq:PRiteration}
		\begin{aligned}
			x^{k+1} &= \OJ_{\alpha\OG}(z^{k}), \\
			z^{k+1} &= z^{k} + 2\OJ_{\alpha\OF}(2x^{k+1}-z^{k}) - 2x^{k+1}.
		\end{aligned}
	\end{equation}
	If both $\OF$ and $\OG$ are monotone, the term $\OR_{\alpha\OF} \circ \OR_{\alpha\OG}$ in~\eqref{eq:PRsplitting} is averaged to yield
	\begin{equation}\label{eq:DRsplitting}
		(\OF + \OG)(x) = \vectorzeros[n] \quad \iff \quad \Big(\frac{1}{2}\Id + \frac{1}{2}\OR_{\alpha\OF} \circ \OR_{\alpha\OG}\Big)(z) = z \text{ and } x = \OJ_{\alpha\OG}(z).
	\end{equation}
	The fixed point iteration corresponding to~\eqref{eq:DRsplitting} is called the \emph{Douglas-Rachford splitting method} and is given by
	\begin{equation}\label{eq:DRiteration}
		\begin{aligned}
			x^{k+1} &= \OJ_{\alpha\OG}(z^{k}), \\
			z^{k+1} &= z^{k} + \OJ_{\alpha\OF}(2x^{k+1} - z^{k}) - x^{k+1}.
		\end{aligned}
	\end{equation}
\end{algo}

\begin{theorem}[Convergence guarantees for Peaceman-Rachford and Douglas-Rachford splitting]\label{thm:PR-DR}
	Let both $\map{\OF}{\real^{n}}{\real^{n}}$ and $\map{\OG}{\real^{n}}{\real^{n}}$ be Lipschitz \change{continuous} w.r.t.\ a diagonally weighted $\ell_{1}$ or $\ell_{\infty}$ norm $\|\cdot\|$, let $\OG$ be monotone w.r.t.\ the same norm, and let $x^0 \in \real^{n}$.
	\begin{enumerate}[(i)]
		\item\label{PR} Suppose $\OF$ is strongly monotone w.r.t.\ $\|\cdot\|$ with monotonicity parameter $c > 0$. Then the sequence of $\{x_k\}_{k=0}^\infty$ generated by the iteration~\eqref{eq:PRiteration} converges to the unique zero, $x^{*}$, of $\OF+\OG$ for every $\alpha \in {\Big]0, \min\Big\{\frac{1}{\diagL(\OF)},\frac{1}{\diagL(\OG)}\Big\}\Big]}$. Moreover, for every $k \in \mathbb{Z}_{\geq 0}$, the iteration satisfies
		\begin{equation*}
			\|x^{k+1} - x^{*}\| \leq \frac{1-\alpha c}{1 + \alpha c}\|x^{k}-x^{*}\|,
		\end{equation*}
		with the convergence rate being optimized at $\alpha = \min\Big\{\frac{1}{\diagL(\OF)},\frac{1}{\diagL(\OG)}\Big\}$.
		\item\label{DR} Alternatively suppose $\OF$ is monotone w.r.t.\ $\|\cdot\|$ and $\zero(\OF + \OG) \neq \emptyset$. Then the sequence of $\{x_k\}_{k=0}^\infty$ generated by the iteration~\eqref{eq:DRiteration} converges to an element of $\zero(\OF + \OG)$ for every $\alpha \in {\Big]0, \min\Big\{\frac{1}{\diagL(\OF)},\frac{1}{\diagL(\OG)}\Big\}\Big]}$.
	\end{enumerate}
\end{theorem}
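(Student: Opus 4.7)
The plan is to reduce both splitting iterations to iterations of a single composite operator in the auxiliary variable $z$, then invoke the Lipschitz estimates for the reflected resolvents from Lemma~\ref{lemma:CayleyLip}\ref{LipCayley1inf} together with either the Banach fixed point theorem (for part~\ref{PR}) or the Krasnosel'skii–Mann convergence lemma (for part~\ref{DR}). The equivalences~\eqref{eq:PRsplitting} and~\eqref{eq:DRsplitting} already identify the fixed point sets of the composite operators with $\zero(\OF + \OG)$, so it remains to check contractivity/nonexpansiveness and to transfer the conclusion from $z^k$ back to $x^k = \OJ_{\alpha\OG}(z^{k-1})$.

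For part~\ref{PR}, I would first verify algebraically that the update in~\eqref{eq:PRiteration} can be rewritten as $z^{k+1} = (\OR_{\alpha\OF} \circ \OR_{\alpha\OG})(z^k)$, using the identity $2\OJ_{\alpha\OG}(z^k) - z^k = \OR_{\alpha\OG}(z^k)$ and unrolling the definition of $\OR_{\alpha\OF}$. By Lemma~\ref{lemma:CayleyLip}\ref{LipCayley1inf}, for $\alpha \in {]0, 1/\diagL(\OF)]}$ we have $\Lip(\OR_{\alpha\OF}) \leq (1-\alpha c)/(1 + \alpha c)$ since $\OF$ is strongly monotone with parameter $c$, and for $\alpha \in {]0, 1/\diagL(\OG)]}$ we have $\Lip(\OR_{\alpha\OG}) \leq 1$ since $\OG$ is monotone. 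Multiplying, the composition is a strict contraction with factor $(1-\alpha c)/(1+\alpha c)$ on the intersected $\alpha$-range. The Banach fixed point theorem then produces a unique fixed point $z^*$ with geometric convergence at this rate, and setting $x^* := \OJ_{\alpha\OG}(z^*)$ gives the unique zero of $\OF + \OG$ by~\eqref{eq:PRsplitting}. The stated per-step rate on $\|x^{k+1} - x^*\|$ follows from the contraction rate of the $z$-iteration together with nonexpansiveness of $\OJ_{\alpha\OG}$ (since $x^{k+1} - x^* = \OJ_{\alpha\OG}(z^k) - \OJ_{\alpha\OG}(z^*)$).

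For part~\ref{DR}, I would analogously verify that~\eqref{eq:DRiteration} is equivalent to $z^{k+1} = \tfrac{1}{2}z^k + \tfrac{1}{2}(\OR_{\alpha\OF} \circ \OR_{\alpha\OG})(z^k)$. Under the weaker assumption that $\OF$ is only monotone, Lemma~\ref{lemma:CayleyLip}\ref{LipCayley1inf} applied with $c = 0$ gives $\Lip(\OR_{\alpha\OF}) \leq 1$ and $\Lip(\OR_{\alpha\OG}) \leq 1$ for $\alpha$ in the stated range, so the composite is nonexpansive. The hypothesis $\zero(\OF + \OG) \neq \emptyset$ together with~\eqref{eq:DRsplitting} yields $\fixed(\OR_{\alpha\OF} \circ \OR_{\alpha\OG}) \neq \emptyset$. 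The $z$-iteration is thus exactly the \KM iteration of a nonexpansive map with $\theta = 1/2$ and a nonempty fixed-point set, so Lemma~\ref{lemma:KMconvergence} applies. Since \KM iterations are Fejér monotone with respect to the fixed-point set (as shown by expanding $\|(1-\theta)z + \theta\OT(z) - z^*\|$), the sequence $\{z^k\}$ is bounded in $\real^n$; combined with the asymptotic regularity $\|z^k - (\OR_{\alpha\OF} \circ \OR_{\alpha\OG})(z^k)\| \to 0$ from the lemma, every limit point is a fixed point, and Fejér monotonicity promotes subsequential convergence to full convergence $z^k \to z^\infty$. Continuity of $\OJ_{\alpha\OG}$ then yields $x^{k+1} = \OJ_{\alpha\OG}(z^k) \to \OJ_{\alpha\OG}(z^\infty) \in \zero(\OF + \OG)$.

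The main obstacles are two compatibility checks. First, the Lipschitz estimate for $\OR_{\alpha\OG}$ in Lemma~\ref{lemma:CayleyLip} requires global Lipschitzness of $\OG$, which is present in the hypotheses; one must be careful to take $\alpha \leq \min\{1/\diagL(\OF), 1/\diagL(\OG)\}$ so that both reflected resolvent bounds apply simultaneously. Second, in part~\ref{DR}, Lemma~\ref{lemma:KMconvergence} alone only provides asymptotic regularity; the passage to a genuine limit uses that finite dimensionality plus Fejér monotonicity of \KM iterations upgrades asymptotic regularity to full convergence, and this argument should be recorded explicitly since Lemma~\ref{lemma:KMconvergence} as stated does not deliver it on its own.
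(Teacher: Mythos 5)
Your proof follows the same route as the paper's: rewrite both iterations as (averaged) Picard iterations of $\OR_{\alpha\OF}\circ\OR_{\alpha\OG}$ in the $z$-variable, bound $\Lip(\OR_{\alpha\OF})\Lip(\OR_{\alpha\OG})$ via Lemma~\ref{lemma:CayleyLip}(ii) on the intersected step-size range, and conclude by the Banach fixed point theorem for part (i) and by the \KM convergence lemma for part (ii), transferring back to $x^k$ through the nonexpansiveness and continuity of $\OJ_{\alpha\OG}$. Your added observation in part (ii) --- that Lemma~\ref{lemma:KMconvergence} by itself only gives asymptotic regularity, and that Fej\'er monotonicity plus finite dimensionality is what upgrades this to actual convergence of $z^k$ --- is correct and fills a step the paper's one-line argument leaves implicit.
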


\begin{proof}
	Regarding statement~(\ref{PR}), by Lemma~\ref{lemma:CayleyLip}(\ref{LipCayley1inf}), we have that $$\Lip(\OR_{\alpha\OF} \circ \OR_{\alpha\OG}) \leq \Lip(\OR_{\alpha\OF})\Lip(\OR_{\alpha\OG}) \leq \frac{1-\alpha c}{1 + \alpha c} < 1$$ for $\alpha \in {]0, \min\{1/\diagL(\OF),1/\diagL(\OG)\}]}$. Then since $\Lip(\OJ_{\alpha \OG})$ is nonexpansive, the Banach fixed point theorem implies the result.
	
	Statement~(\ref{DR}) holds because Lemma~\ref{lemma:CayleyLip}(\ref{LipCayley1inf}) implies $\Lip(\OR_{\alpha\OF} \circ \OR_{\alpha\OG}) \leq 1$. Then the iteration~\eqref{eq:DRiteration} converges because of Lemma~\ref{lemma:KMconvergence}.
\end{proof}

Compared to classical criteria for the convergence of the Douglas-Rachford iteration, Theorem~\ref{thm:PR-DR} requires \change{Lipschitz continuity} of $\OF$ and $\OG$ in order to utilize the Lipschitz estimates for the reflected resolvents $\OR_{\alpha\OF}$ and $\OR_{\alpha\OG}$. Moreover, the parameter $\alpha > 0$ must be chosen small enough in the non-Euclidean setting whereas convergence is guaranteed for any choice of $\alpha$ in the Hilbert case. This is because the reflected resolvent is only nonexpansive for a certain range of $\alpha$ when the norm is not a Hilbert one, see Example~\ref{example:linear}.

\newcommand{\ReLU}{\mathrm{ReLU}}
\newcommand{\LeakyReLU}{\mathrm{LeakyReLU}}
\section{Set-Valued Inclusions and an Application to Recurrent Neural Networks}\label{sec:applications}

\subsection{Set-Valued Inclusions and Non-Euclidean Properties of Proximal Operators}\label{subsec:set-valued}

In many instances one may wish to solve an inclusion problem of the form
\begin{equation}\label{eq:mon-splitting}
	\text{Find } x \in \real^{n} \text{ such that } \quad \vectorzeros[n] \in (\OF + \OG)(x),
\end{equation}
where $\map{\OF}{\real^{n}}{\real^{n}}$ is a single-valued continuous monotone mapping but $\map{\OG}{\real^{n}}{2^{\real^{n}}}$ is a set-valued mapping. In monotone operator theory on Hilbert spaces, leveraging the fact that $\OJ_{\alpha \OG}$ is single-valued and nonexpansive for every $\alpha > 0$ when $\OG$ is maximally monotone, algorithms such as the forward-backward splitting and Douglas-Rachford splitting may be used to solve~\eqref{eq:mon-splitting} under suitable assumptions on $\OF$. 

In this section we aim to prove similar results in the non-Euclidean case. We will specialize to the case that $\OG$ is the subdifferential of a \emph{separable, proper lower semicontinuous (l.s.c.), convex function}. To start we must define the proximal operator of a l.s.c. convex function.

\begin{definition}[Proximal operator,~\change{\citealt{JJM:62} and}~{\citealt[Def.~12.23]{HHB-PLC:17}}]
	Let $\map{g}{\real^{n}}{{]{-}\infty,{+}\infty]}}$ be a proper l.s.c. convex function. The \emph{proximal operator of $g$} evaluated at $x \in \real^{n}$ is the map $\map{\prox_g}{\real^{n}}{\real^{n}}$ defined by
	\begin{equation}\label{eq:prox}
		\prox_g(x) = \argmin_{z \in \real^{n}} \frac{1}{2}\|x - z\|_{2}^2 + g(z).
	\end{equation}
\end{definition}

Since $\map{g}{\real^{n}}{{]{-}\infty,{+}\infty]}}$ is proper, l.s.c., and convex, we can see that for $\alpha > 0$ and fixed $x \in \real^{n}$, the map $z \mapsto \frac{1}{2}\|x - z\|_{2}^2 + \alpha g(z)$ is strongly convex and thus has a unique minimizer, so for each $x \in \real^{n}$, $\prox_{\alpha g}(x)$ is single-valued. 
Moreover, we have the following connection between proximal operators and resolvents of subdifferentials. 
\begin{proposition}[\change{\citealt{RTR:76b} and}~{\citealt[Example~23.3]{HHB-PLC:17}}]\label{prop:resolvent-proximal}
	Suppose $\map{g}{\real^{n}}{{]{-}\infty,{+}\infty]}}$ is proper, l.s.c., and convex. Then for every $\alpha > 0$,
	$\OJ_{\alpha \partial g}(x) = \prox_{\alpha g}(x)$.
\end{proposition}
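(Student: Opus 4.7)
The plan is to verify both inclusions via the first-order optimality condition for the strongly convex minimization that defines $\prox_{\alpha g}$. I would begin by fixing $x \in \real^n$ and setting $z^{\ast} = \prox_{\alpha g}(x)$, which is well-defined and single-valued because the objective $z \mapsto \tfrac{1}{2}\|x-z\|_2^2 + \alpha g(z)$ is proper, l.s.c., and strongly convex (with modulus $1$). By Fermat's rule for convex minimization, $z^{\ast}$ is the unique minimizer if and only if $\vectorzeros[n]$ lies in the subdifferential of the objective at $z^{\ast}$.

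Next, I would invoke the subdifferential sum rule. Since $h(z) := \tfrac{1}{2}\|x-z\|_2^2$ is everywhere finite, continuous, and differentiable, the sum rule yields
\begin{equation*}
\partial\big(h + \alpha g\big)(z^{\ast}) \;=\; \nabla h(z^{\ast}) + \alpha\,\partial g(z^{\ast}) \;=\; (z^{\ast} - x) + \alpha\,\partial g(z^{\ast}).
\end{equation*}
Combined with Fermat's rule, the optimality condition $\vectorzeros[n] \in \partial(h+\alpha g)(z^{\ast})$ is equivalent to $x \in z^{\ast} + \alpha\,\partial g(z^{\ast}) = (\Id + \alpha\,\partial g)(z^{\ast})$. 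Reading this backwards, $z^{\ast} \in (\Id + \alpha\,\partial g)^{-1}(x) = \OJ_{\alpha\partial g}(x)$.

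Finally, I would verify that $\OJ_{\alpha\partial g}(x)$ is itself single-valued, so that the two sides coincide as functions (not merely as set memberships). If $z_1, z_2 \in \OJ_{\alpha\partial g}(x)$, then $\tfrac{1}{\alpha}(x-z_1) \in \partial g(z_1)$ and $\tfrac{1}{\alpha}(x-z_2) \in \partial g(z_2)$, and monotonicity of $\partial g$ on $\real^n$ (viewed as a Euclidean space with its standard inner product, where subdifferentials of convex functions are monotone) gives $\tfrac{1}{\alpha}\langle (x-z_1)-(x-z_2),\, z_1 - z_2\rangle \geq 0$, i.e., $-\tfrac{1}{\alpha}\|z_1-z_2\|_2^2 \geq 0$, forcing $z_1 = z_2$. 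Hence $\OJ_{\alpha\partial g}(x) = \{z^{\ast}\} = \{\prox_{\alpha g}(x)\}$, which is the claimed identity.

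The main technical step to verify is the subdifferential sum rule; I do not anticipate an obstacle here since one summand is everywhere differentiable, so the standard Moreau–Rockafellar additivity applies without any constraint qualification. The rest is essentially rearrangement.
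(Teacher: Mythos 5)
Your proof is correct and complete: Fermat's rule plus the Moreau--Rockafellar sum rule (valid here since one summand is everywhere finite and differentiable) gives $z^{\ast}=\prox_{\alpha g}(x)\iff x\in(\Id+\alpha\partial g)(z^{\ast})$, and monotonicity of $\partial g$ yields single-valuedness of the resolvent. The paper does not prove this proposition but merely cites \cite[Example~23.3]{HHB-PLC:17}, and your argument is precisely the standard one given there, so there is nothing to contrast.
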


In the case of scalar functions, one can exactly capture the set of functions which are proximal operators of some proper l.s.c. convex functions.

\begin{proposition}[{\citealt[Proposition~24.31]{HHB-PLC:17}}]\label{prop:prox-nec-suff}
	Let $\map{\phi}{\real}{\real}$. Then $\phi$ is the proximal operator of a proper l.s.c. convex function $\map{g}{\real}{{]{-}\infty,{+}\infty]}}$, i.e., $\phi = \prox_g$ if and only if $\phi$ satisfies
	\begin{equation}\label{eq:slope-restriction}
		0 \leq \frac{\phi(x) - \phi(y)}{x - y} \leq 1, \quad \text{for all } x,y \in \real, x\neq y.
	\end{equation}
\end{proposition}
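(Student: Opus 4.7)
The plan is to prove the equivalence in both directions, using the identification $\prox_{\alpha g} = \OJ_{\alpha\partial g}$ from Proposition~\ref{prop:resolvent-proximal} to translate between proximal operators and resolvents of subdifferentials.

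For the necessity direction, suppose $\phi = \prox_g = \OJ_{\partial g}$. Since $g$ is convex, $\partial g$ is monotone. Given $x, y \in \real$, set $u = \phi(x)$ and $v = \phi(y)$; by definition of the resolvent, $x - u \in \partial g(u)$ and $y - v \in \partial g(v)$. Monotonicity of $\partial g$ yields $(u - v)\bigl((x - u) - (y - v)\bigr) \geq 0$, which rearranges to
\begin{equation*}
(\phi(x) - \phi(y))^2 \leq (x - y)\bigl(\phi(x) - \phi(y)\bigr).
\end{equation*}
If $\phi(x) = \phi(y)$, the slope is zero and both bounds hold trivially. Otherwise, this inequality forces $(x - y)$ and $(\phi(x) - \phi(y))$ to have the same sign, and dividing by $(x-y)(\phi(x) - \phi(y))$ gives $0 \leq (\phi(x) - \phi(y))/(x - y) \leq 1$.

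For the sufficiency direction, assume $\phi$ satisfies~\eqref{eq:slope-restriction}. The bounds imply $\phi$ is nondecreasing and $1$-Lipschitz, hence continuous. The goal is to construct a proper l.s.c. convex $g$ such that $x - \phi(x) \in \partial g(\phi(x))$ for every $x \in \real$, since this is equivalent to $x \in (\Id + \partial g)(\phi(x))$, i.e., $\phi(x) = \OJ_{\partial g}(x) = \prox_g(x)$. The key observation is that the graph $\Gamma := \{(\phi(x), x - \phi(x)) : x \in \real\} \subset \real \times \real$ is monotone: for any $x_1, x_2$, the product
\begin{equation*}
\bigl(\phi(x_1) - \phi(x_2)\bigr)\bigl((x_1 - \phi(x_1)) - (x_2 - \phi(x_2))\bigr) = (x_1 - x_2)(\phi(x_1) - \phi(x_2)) - (\phi(x_1) - \phi(x_2))^2
\end{equation*}
is nonnegative by~\eqref{eq:slope-restriction}.

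The main obstacle is promoting $\Gamma$ to the graph of the subdifferential of a proper l.s.c. convex function $g$. In the scalar setting, I would invoke the classical fact that maximal monotone operators on $\real$ are precisely subdifferentials of proper l.s.c. convex functions; concretely, the image $\phi(\real)$ is an interval, and one can define $g$ to be the primitive of any nondecreasing selection $\sigma$ extending $\Gamma$ to a maximal monotone graph, i.e., $g(u) := \int_{u_0}^u \sigma(s)\, ds$ on $\phi(\real)$ (setting $g = +\infty$ outside) with $u_0$ any fixed reference point, and then verify via continuity and the $1$-Lipschitz property of $\phi$ that $x - \phi(x) \in \partial g(\phi(x))$ for every $x$. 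The delicate points are handling intervals where $\phi$ is constant (so $\partial g$ is multivalued at the corresponding point and must contain an interval of slopes $\{x - \phi(x) : \phi(x) = u\}$) versus intervals where $\phi$ is strictly increasing (so $\phi^{-1}$ is single-valued and $\partial g = \phi^{-1} - \Id$), and ensuring the two regimes patch consistently into a single convex $g$.
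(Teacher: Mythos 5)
The paper offers no proof of this statement---it is quoted directly from Bauschke--Combettes---so there is no internal argument to compare against; I will assess your proof on its own terms. Your necessity direction is complete and correct: the optimality condition for the prox minimization gives $x-\phi(x)\in\partial g(\phi(x))$, monotonicity of $\partial g$ then yields the firm-nonexpansiveness inequality $(\phi(x)-\phi(y))^2\le (x-y)(\phi(x)-\phi(y))$, and the slope bounds follow exactly as you say.

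The sufficiency direction has the right skeleton but stops at the crucial step. Two remarks. First, the extension you worry about is unnecessary: letting $A$ be the relation with graph $\Gamma=\{(\phi(x),\,x-\phi(x)):x\in\real\}$, you have $(\Id+A)(\phi(x))\ni x$ for every $x$, so $\Id+A$ is surjective; by Minty's theorem a monotone relation with this property is already \emph{maximal} monotone, so no selection or extension is needed, and $\phi=(\Id+A)^{-1}$ is automatically the (single-valued, full-domain) resolvent of $A$. Second, the remaining ingredient---that every maximal monotone relation on $\real$ is $\partial g$ for some proper l.s.c.\ convex $g$---is the genuinely nontrivial fact you invoke; your primitive construction $g(u)=\int_{u_0}^{u}\sigma(s)\,ds$ (with $g\equiv+\infty$ off $\overline{\phi(\real)}$) is the standard proof of it, but the ``delicate points'' you list (flat pieces of $\phi$ forcing $\partial g$ to be an interval, and boundary points of $\phi(\real)$ where $\partial g$ must contain a half-line) are precisely what needs to be verified, and as written you name them rather than check them. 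If you cite the one-dimensional maximal-monotone-equals-subdifferential fact as classical (it is), the argument closes; otherwise this half is incomplete. For comparison, the textbook route in Bauschke--Combettes is slightly different and avoids the case analysis: the slope condition says $\phi$ is nondecreasing and $1$-Lipschitz, hence $\phi=h'$ for the convex differentiable $h(x)=\int_0^x\phi(t)\,dt$, and Moreau's characterization of proximity operators as nonexpansive gradients of convex functions finishes the proof.
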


A list of examples of scalar functions satisfying~\eqref{eq:slope-restriction} and their corresponding proper l.s.c. convex function is provided in~\citep[Table~1]{JL-CF-ZL:19}.

To prove non-Euclidean properties of proximal operators, we will leverage a well-known property, which we highlight in the following proposition.

\begin{proposition}[Proximal operator of separable convex functions,~{\citealt[Section~2.1]{NP-SB:14}}]\label{prop:prox-properties}
	For $i \in \until{n}$, let $\map{g_i}{\real}{{]{-}\infty,{+}\infty]}}$, be proper, l.s.c., and convex. Define $\map{g}{\real^{n}}{{]{-}\infty,{+}\infty]}}$ by $g(x) = \sum_{i=1}^n g_i(x_i)$. Then $g$ is proper, l.s.c., and convex and for all $\alpha > 0$,
	\begin{equation*}
		\prox_{\alpha g}(x) = (\prox_{\alpha g_{1}}(x_{1}), \dots, \prox_{\alpha g_n}(x_n)) \in \real^{n}.
	\end{equation*}
	If $g$ satisfies $g(x) = \sum_{i=1}^n g_i(x_i)$ with each $g_i$ proper, l.s.c., and convex, we call $g$ \emph{separable}.
\end{proposition}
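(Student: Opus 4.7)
The plan is to first verify the three properties of $g$ separately and then exploit the additive structure of both the quadratic term and $g$ in the definition of the proximal operator to decouple the minimization across coordinates.

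For the three properties of $g$, I would argue as follows. Convexity is immediate since a nonnegative (indeed, equal-weight) sum of convex functions is convex. Lower semicontinuity follows from the fact that a finite sum of l.s.c.\ functions taking values in ${]{-}\infty,{+}\infty]}$ is l.s.c., by elementary arguments on $\liminf$'s. Properness requires exhibiting at least one point where $g$ is finite: since each $g_i$ is proper, pick $x_i^0 \in \real$ with $g_i(x_i^0) < \infty$; then $g(x^0) = \sum_i g_i(x_i^0) < \infty$, and since each $g_i$ never takes the value $-\infty$, neither does $g$.

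The main content is the identity for $\prox_{\alpha g}(x)$. Starting from the definition
\begin{equation*}
\prox_{\alpha g}(x) = \argmin_{z \in \real^n} \frac{1}{2}\|x-z\|_2^2 + \alpha g(z),
\end{equation*}
I would substitute $\|x-z\|_2^2 = \sum_{i=1}^n (x_i - z_i)^2$ and $g(z) = \sum_{i=1}^n g_i(z_i)$ to rewrite the objective as
\begin{equation*}
\sum_{i=1}^n \Bigl(\tfrac{1}{2}(x_i - z_i)^2 + \alpha g_i(z_i)\Bigr).
\end{equation*}
Each summand depends on a single coordinate $z_i$ only, so the infimum over $z \in \real^n$ equals the sum of the scalar infima over $z_i \in \real$. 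Moreover, as noted in the text just after Definition~\ref{eq:prox} (strong convexity of each scalar objective), each scalar minimizer is unique and equals $\prox_{\alpha g_i}(x_i)$, so the joint minimizer is likewise unique and its $i$-th coordinate is $\prox_{\alpha g_i}(x_i)$. This yields the claimed formula.

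There is no real obstacle here; the proof is a direct consequence of separability of the $\ell_2$ quadratic and of $g$ across coordinates. The only point requiring any care is to justify that ``infimum of a sum of independent terms equals the sum of their infima'' is valid uniquely, which follows from strong convexity of each coordinate objective (so that each argmin is a singleton and the concatenation of the scalar argmins is indeed the argmin of the sum).
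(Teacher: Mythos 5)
Your proof is correct. The paper does not actually prove this proposition---it cites it directly to Parikh and Boyd's monograph---so there is no in-paper argument to compare against; your coordinate-wise decoupling of the objective $\sum_i \bigl(\tfrac{1}{2}(x_i-z_i)^2 + \alpha g_i(z_i)\bigr)$, together with the observation that each scalar term is strongly convex and hence has a unique minimizer, is precisely the standard argument behind the cited result, and your care in justifying that the argmin of a separable sum is the concatenation of the scalar argmins is appropriate.
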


In the following novel proposition, we showcase that when $g$ is separable, $\prox_{\alpha g}$ and $2\prox_{\alpha g} - \Id$ are nonexpansive w.r.t.\ non-Euclidean norms.

%\sashatodo{Possibly add extension to $\|\cdot\|_{1,[\eta]}$?}

\begin{proposition}[Nonexpansiveness of proximal operators of separable convex maps]\label{prop:nonexpansive}
	For $i \in \until{n}$, let each $\map{g_i}{\real}{{]{-}\infty,{+}\infty]}}$ be proper, l.s.c., and convex. Define $\map{g}{\real^{n}}{{]{-}\infty,{+}\infty]}}$ by $g(x) = \sum_{i=1}^n g_i(x_i)$. For every $\alpha > 0$ and for any $\eta \in \realpositive^n$, both $\OJ_{\alpha \partial g} = \prox_{\alpha g}$ and $\OR_{\alpha\partial g} = 2\prox_{\alpha g} - \Id$ are nonexpansive w.r.t.\ $\|\cdot\|_{\infty,[\eta]^{-1}}$.\footnote{More generally, $\prox_{\alpha g}$ and $2\prox_{\alpha g} - \Id$ are nonexpansive with respect to any monotonic norm.}
\end{proposition}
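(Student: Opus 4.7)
The plan is to reduce the nonexpansiveness claim to a componentwise scalar property, exploiting the separable structure of $g$. By Proposition~\ref{prop:prox-properties}, for every $\alpha > 0$ the proximal operator acts coordinatewise, i.e., $(\prox_{\alpha g}(x))_i = \prox_{\alpha g_i}(x_i)$. Note that $\alpha g_i$ is also proper, l.s.c., and convex, so Proposition~\ref{prop:prox-nec-suff} applies to each scalar map $\prox_{\alpha g_i}$ and yields the slope restriction $0 \leq (\prox_{\alpha g_i}(s) - \prox_{\alpha g_i}(t))/(s - t) \leq 1$ for all $s \neq t$. In particular, $|\prox_{\alpha g_i}(s) - \prox_{\alpha g_i}(t)| \leq |s - t|$.

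For the resolvent, given any $x,y \in \real^n$, I would write
\begin{equation*}
\|\prox_{\alpha g}(x) - \prox_{\alpha g}(y)\|_{\infty,[\eta]^{-1}} = \max_{i \in \until{n}} \frac{1}{\eta_i}|\prox_{\alpha g_i}(x_i) - \prox_{\alpha g_i}(y_i)| \leq \max_{i \in \until{n}} \frac{1}{\eta_i}|x_i - y_i| = \|x-y\|_{\infty,[\eta]^{-1}},
\end{equation*}
which gives nonexpansiveness of $\OJ_{\alpha \partial g} = \prox_{\alpha g}$ (invoking Proposition~\ref{prop:resolvent-proximal} for the identification).

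For the reflected resolvent, the $i$-th component of $(2\prox_{\alpha g} - \Id)(x) - (2\prox_{\alpha g} - \Id)(y)$ equals $(x_i - y_i)(2 s_i - 1)$, where $s_i := (\prox_{\alpha g_i}(x_i) - \prox_{\alpha g_i}(y_i))/(x_i - y_i) \in [0,1]$ when $x_i \neq y_i$ (and the $i$-th component vanishes when $x_i = y_i$). Since $2s_i - 1 \in [-1,1]$, we obtain $|2\prox_{\alpha g_i}(x_i) - x_i - (2\prox_{\alpha g_i}(y_i) - y_i)| \leq |x_i - y_i|$ componentwise. Taking the $[\eta]^{-1}$-weighted $\ell_\infty$ maximum as above yields nonexpansiveness of $2\prox_{\alpha g} - \Id = \OR_{\alpha \partial g}$.

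There is no real obstacle here: the key enabling fact is that the diagonally-weighted $\ell_\infty$ norm is monotonic (i.e., $|u_i| \leq |v_i|$ for all $i$ implies $\|u\| \leq \|v\|$), which lets the componentwise scalar nonexpansiveness pass to the vector norm without loss. The same argument would go through for any other monotonic norm, justifying the footnote. The only subtlety worth a sentence is the identification $\OJ_{\alpha \partial g} = \prox_{\alpha g}$ via Proposition~\ref{prop:resolvent-proximal}, which ensures we are proving the claim about the stated operators.
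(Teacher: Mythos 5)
Your proof is correct and follows essentially the same route as the paper's: componentwise reduction via Proposition~\ref{prop:prox-properties}, the scalar slope restriction from Proposition~\ref{prop:prox-nec-suff} (your $2s_i - 1 \in [-1,1]$ observation is just a rephrasing of the paper's inequality $-(x_i-y_i)^2 \leq ((2\prox_{\alpha g_i}(x_i) - x_i) - (2\prox_{\alpha g_i}(y_i) - y_i))(x_i-y_i) \leq (x_i-y_i)^2$), and the weighted $\ell_\infty$ maximum to conclude. Your explicit remark that monotonicity of the norm is the enabling property is a nice touch that also explains the paper's footnote.
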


\begin{proof}
	By Proposition~\ref{prop:prox-properties} we have
	%\begin{equation}
	$\prox_{\alpha g}(x) = (\prox_{\alpha g_{1}}(x_{1}), \dots, \prox_{\alpha g_n}(x_n))$.
	%\end{equation}
	Moreover, each $\prox_{\alpha g_i}$ is nonexpansive and monotone by Proposition~\ref{prop:prox-nec-suff} and thus satisfies
	\begin{equation}\label{eq:prox-mon}
		0 \leq (\prox_{\alpha g_i}(x_i) - \prox_{\alpha g_i}(y_i))(x_i-y_i) \leq (x_i-y_i)^2, \qquad \text{for all } x_i,y_i \in \real.	\end{equation}
	
	We then conclude
	\begin{equation*}
		\begin{aligned}
			\|\prox_{\alpha g}(x) - \prox_{\alpha g}(y)\|_{\infty,[\eta]^{-1}} &= \max_{i\in\until{n}} \frac{1}{\eta_i}|\prox_{\alpha g_i}(x_i) - \prox_{\alpha g_i}(y_i)| \\
			&\leq \max_{i\in\until{n}} \frac{1}{\eta_i} |x_i - y_i| = \|x - y\|_{\infty,[\eta]^{-1}}.
		\end{aligned}
	\end{equation*}
	Regarding $\OR_{\alpha\partial g}$, we note that~\eqref{eq:prox-mon} implies for all $x_i,y_i \in \real$
	\begin{equation*}
		\begin{aligned}
			-(x_i-y_i)^2 \leq &\;((2\prox_{\alpha g_i}(x_i) - x_i) - (2\prox_{\alpha g_i}(y_i) - y_i))(x_i-y_i) \leq (x_i-y_i)^2,  \\
			\implies &|(2\prox_{\alpha g_i}(x_i) - x_i) - (2\prox_{\alpha g_i}(y_i) - y_i)| \leq |x_i-y_i|.
		\end{aligned}
	\end{equation*}
	Following the same reasoning as for $\prox_{\alpha g}$, we conclude that $2\prox_{\alpha g} - \Id$ is nonexpansive w.r.t.\ $\|\cdot\|_{\infty,[\eta]^{-1}}$.
\end{proof}

We recall from monotone operator theory on Hilbert spaces that if $\map{\OF}{\real^{n}}{\real^{n}}$ is continuous and $\map{\OG}{\real^{n}}{2^{\real^{n}}}$ satisfies $\dom(\OJ_{\alpha \OG}) = \real^{n}$ and $\OJ_{\alpha \OG}(x)$ is single-valued for all $x \in \real^{n}, \alpha > 0$, then the following equivalences hold: (i) $\vectorzeros[n] \in (\OF + \OG)(x)$, (ii) $x = (\OJ_{\alpha \OG} \circ \OS_{\alpha\OF})(x)$, and (iii) $z = (\OR_{\alpha\OF} \circ \OR_{\alpha\OG})(z)$ and $x = \OJ_{\alpha \OG}(z)$~\citep[pp.~25 and~28]{EKR-SB:16}. In other words, even if $\OG$ is a set-valued mapping, forward-backward and Peaceman-Rachford splitting methods may be applied to compute zeros of the inclusion problem~\eqref{eq:mon-splitting}.

When $\map{\OF}{\real^{n}}{\real^{n}}$ in~\eqref{eq:mon-splitting} is Lipschitz \change{continuous} and strongly monotone w.r.t.\ $\|\cdot\|_{\infty,[\eta]^{-1}}$ with monotonicity parameter $c > 0$ and $\OG = \partial g$ for a separable proper, l.s.c., convex mapping $\map{g}{\real^{n}}{{]{-}\infty,{+}\infty]}}$, by Proposition~\ref{prop:nonexpansive}, the composition $\prox_{\alpha g} \circ \OS_{\alpha\OF}$ is a contraction w.r.t.\ $\|\cdot\|_{\infty,[\eta]^{-1}}$ for small enough $\alpha > 0$. Therefore, the forward-backward splitting method, Algorithm~\ref{alg:fwd-backwd}, may be applied to find a zero of the splitting problem~\eqref{eq:mon-splitting}. Analogously, for small enough $\alpha > 0$, $\OR_{\alpha\OF} \circ \OR_{\alpha\OG}$ is a contraction w.r.t.\ $\|\cdot\|_{\infty,[\eta]^{-1}}$ and Peaceman-Rachford splitting, Algorithm~\ref{alg:PR-DR}, may be applied to find a zero of the problem~\eqref{eq:mon-splitting}. In the following section, we present an application of the above theory to recurrent neural networks.

\subsection{Iterations for Recurrent Neural Network Equilibrium Computation}\label{subsec:RNN-theory}
Consider the continuous-time recurrent neural network
\begin{equation}\label{eq:inn}
	\begin{aligned}
		\dot{x} = -x + \Phi(Ax + Bu + b) =: F(x,u),\\
	\end{aligned}
\end{equation}
where $x \in \real^{n}, u \in \real^{m}, A \in \real^{n \times n}, B \in \real^{n \times m}, b \in \real^{n}$, and $\map{\Phi}{\real^{n}}{\real^{n}}$ is a separable activation function, i.e., it acts entry-wise in the sense that $\Phi(x) = (\phi(x_{1}),\dots,\phi(x_n))^\top$. In this section we consider activation functions $\map{\phi}{\real}{\real}$ satisfying slope bounds of the form
\begin{equation}\label{eq:slope-ineq}
	d_{1} = \inf_{x,y \in \real, x\neq y} \frac{\phi(x)-\phi(y)}{x-y} \geq 0, \qquad d_{2} = \sup_{x,y \in \real, x\neq y} \frac{\phi(x)-\phi(y)}{x-y} \leq 1.
\end{equation}

Most standard activation functions used in machine learning satisfy these bounds. 
In~\citep[Theorem~23]{AD-AVP-FB:22q}, it was shown that a sufficient condition for the strong infinitesimal contractivity of the map $x \mapsto F(x,u)$ is the existence of weights $\eta \in \realpositive^n$ such that $\mu_{\infty,[\eta]^{-1}}(A) < 1$; if this condition holds, the recurrent neural network~\eqref{eq:inn} is strongly infinitesimally contracting w.r.t.\ $\|\cdot\|_{\infty,[\eta]^{-1}}$ with rate $1 - \max\{d_{1}\gamma, d_{2}\gamma\}$, where we define $\gamma = \mu_{\infty,[\eta]^{-1}}(A) < 1$.

Suppose that, for fixed $u$, we are interested in efficiently computing the unique equilibrium point $x_{u}^{*}$ of $F(x,u)$. Note that equilibrium points $x_{u}^{*}$ satisfy $x_{u}^{*} = \Phi(Ax_{u}^{*} + Bu + b)$ which corresponds to an implicit neural network (INN), which have recently gained significant attention in the machine learning community~\citep{SB-JZK-VK:19,EW-JZK:20,LEG-FG-BT-AA-AYT:21}. In this regard, computing equilibrium points of~\eqref{eq:inn} corresponds to computing the forward pass of an INN.

Since the map $x \mapsto F(x,u)$ is strongly infinitesimally contracting w.r.t.\ $\|\cdot\|_{\infty,[\eta]^{-1}}$, the map $x \mapsto -F(x,u)$ is strongly monotone with monotonicity parameter $1 - \max\{d_{1}\gamma,d_{2}\gamma\}$ (see Remark~\ref{rmk:contraction}). As a consequence, applying the forward step method, Algorithm~\ref{alg:fwdstep}, to compute $x_{u}^{*}$ yields the iteration
\begin{equation}\label{eq:averagediteration}
	x^{k+1} = (1 - \alpha)x^{k} + \alpha \Phi(Ax^{k} + Bu + b),
\end{equation}
which is the iteration proposed in~\citep{SJ-AD-AVP-FB:21f}. This iteration is guaranteed to converge for every $\alpha \in {]0,\frac{1}{1-\min_{i\in \until{n}}\min\{d_{1} \cdot (A)_{ii},d_{2}\cdot(A)_{ii}\}}]}$ with contraction factor $1 - \alpha(1 - \max\{d_{1}\gamma,d_{2}\gamma\})$ by Theorem~\ref{thm:fwdstep}(ii).

However, rather than viewing finding an equilibrium of~\eqref{eq:inn} as finding a zero of a non-Euclidean monotone operator, it is also possible to view it as a monotone inclusion problem of the form~\eqref{eq:mon-splitting}. 

\begin{proposition}[{\citealt[Theorem~1]{EW-JZK:20}}]\label{prop:splittingRNN}
	Suppose $\phi$ satisfies the bounds~\eqref{eq:slope-ineq}. Then finding an equilibrium point $x_{u}^{*}$ of~\eqref{eq:inn} is equivalent to the (set-valued) operator splitting problem $\vectorzeros[n] \in (\OF+\OG)(x_{u}^{*})$, with
	\begin{equation}\label{eq:splitting}
		\OF(z) = (I_n - A)z - (Bu + b), \qquad \OG(z) = \partial g(z),
	\end{equation}
	where we denote $g(z) = \sum_{i=1}^n f(z_i)$ and $\map{f}{\real}{{]{-}\infty,{+}\infty]}}$ is proper, l.s.c., convex, and satisfies $\phi = \prox_{f}$.
\end{proposition}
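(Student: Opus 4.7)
The plan is to reduce the equilibrium condition to a subdifferential inclusion by recognizing $\Phi$ as a resolvent. The starting observation is that $x_u^*$ is an equilibrium of~\eqref{eq:inn} if and only if $F(x_u^*,u) = \vectorzeros[n]$, which is equivalent to
\begin{equation*}
x_u^* = \Phi(Ax_u^* + Bu + b).
\end{equation*}

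First, I would invoke Proposition~\ref{prop:prox-nec-suff}: because the scalar activation $\phi$ satisfies the slope bounds~\eqref{eq:slope-ineq} with $d_1 \geq 0$ and $d_2 \leq 1$, there exists a proper, l.s.c., convex $\map{f}{\real}{{]{-}\infty,{+}\infty]}}$ with $\phi = \prox_f$. Next, using Proposition~\ref{prop:prox-properties}, the separable function $g(z) = \sum_{i=1}^n f(z_i)$ is proper, l.s.c., and convex, and its proximal operator acts componentwise, so that $\prox_g(z) = (\prox_f(z_1),\dots,\prox_f(z_n)) = \Phi(z)$; hence $\Phi = \prox_g$.

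Then, by Proposition~\ref{prop:resolvent-proximal}, $\prox_g = \OJ_{\partial g} = (\Id + \partial g)^{-1}$. Writing $w := Ax_u^* + Bu + b$, the equilibrium condition $x_u^* = \Phi(w)$ is therefore equivalent to $x_u^* = (\Id + \partial g)^{-1}(w)$, which by the defining property of the resolvent means $w \in x_u^* + \partial g(x_u^*)$. Rearranging yields
\begin{equation*}
\vectorzeros[n] \in (I_n - A)x_u^* - (Bu + b) + \partial g(x_u^*) = \OF(x_u^*) + \OG(x_u^*),
\end{equation*}
which is precisely $\vectorzeros[n] \in (\OF + \OG)(x_u^*)$. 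All implications above are reversible, so the two conditions are equivalent.

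The only nontrivial step is recognizing $\Phi$ as a proximal operator of a separable convex function, which is the content of Proposition~\ref{prop:prox-nec-suff} together with separability; once this is established, the remainder is a direct translation between the resolvent identity and the subdifferential inclusion.
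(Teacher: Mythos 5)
Your proposal is correct and follows the same route as the paper: invoke Proposition~\ref{prop:prox-nec-suff} to obtain $f$ with $\phi = \prox_f$, then identify $\Phi = \prox_g = (\Id+\partial g)^{-1}$ via Propositions~\ref{prop:prox-properties} and~\ref{prop:resolvent-proximal} and translate the fixed-point equation into the inclusion. The paper simply defers this second half to the cited reference \cite[Theorem~1]{EW-JZK:20}, whereas you spell it out explicitly; the details you supply are accurate.
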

\begin{proof}
	By Proposition~\ref{prop:prox-nec-suff}, since $\phi$ satisfies the bounds~\eqref{eq:slope-ineq}, there exists a proper, l.s.c., convex $f$ with $\phi = \prox_f$. The remainder of the proof is equivalent to that in~\citep[Thm~1]{EW-JZK:20}.
	%Next, we observe that computing an equilibrium point of~\eqref{eq:inn} is equivalent to computing a fixed-point $x_{u}^{*} = \Phi(Ax_{u}^{*} + Bu + b)$. Since $\phi(x_i) = \prox_f(x_i)$, we observe that $\prox_g(x) = (\prox_f(x_i), \dots, \prox_f(x_n)) = \Phi(x)$ by Proposition~\ref{prop:prox-properties} and the fixed point problem may be rewritten
	%\begin{equation}\label{eq:intermediate-fixed}
	%x_{u}^{*} = \prox_{g}(Ax_{u}^{*} + Bu + b) = (\prox_g \circ \OS_{\OF})(x_{u}^{*}),
	%\end{equation}
	%where $\OF$ is defined as in~\eqref{eq:splitting}. However, since $\prox_g = \OJ_{\partial g} = \OJ_{\OG}$ by Proposition~\ref{prop:resolvent-proximal}, we have that~\eqref{eq:intermediate-fixed} is equivalent to the inclusion problem $\vectorzeros[n] \in (\OF + \OG)(x_{u}^{*})$, which proves the result.
\end{proof}

While Proposition~\ref{prop:splittingRNN} was leveraged in~\citep{EW-JZK:20} for monotonicity w.r.t.\ the $\ell_{2}$ norm, we will use it for $\OF$ which is monotone w.r.t.\ a diagonally-weighted $\ell_{\infty}$ norm.$\;\!$\footnote{Unless $A = A^\top$, the monotone inclusion problem~\eqref{eq:splitting} does not arise from a convex minimization problem.}

%For clarity, we stress that in everything that follows, when we write $f$, we mean a scalar-valued map that satisfies $\phi = \prox_f$ and instead $\map{g}{\real^{n}}{{]{-}\infty,{+}\infty]}}$ and is given by $z \mapsto \sum_{i=1}^n f(z_i)$.

Checking that $\OF$ is strongly monotone w.r.t.\ $\|\cdot\|_{\infty,[\eta]^{-1}}$ is straightforward under the assumption that $\gamma < 1$.
As a consequence of Propositions,~\ref{prop:nonexpansive} and~\ref{prop:splittingRNN}, we can consider different operator splitting algorithms to compute the equilibrium of~\eqref{eq:inn}. First, the forward-backward splitting method, Algorithm~\ref{alg:fwd-backwd}, as applied to this problem is
\begin{equation}\label{eq:FB-RNN}
	\begin{aligned}
		x^{k+1} &= \prox_{\alpha g}((1 - \alpha)x^{k} + \alpha (Ax^{k} + Bu + b)).
	\end{aligned}
\end{equation}
Since $\OF$ is Lipschitz \change{continuous}, this iteration is guaranteed to converge to the unique fixed point of~\eqref{eq:inn} by Theorem~\ref{thm:fwd-backwd}(i). Moreover, the contraction factor for this iteration is $1 - \alpha(1 - \gamma)$ for $\alpha \in {]0, \frac{1}{1 - \min_i (A)_{ii}}]}$, with contraction factor being minimized at $\alpha^{*} = \frac{1}{1 - \min_i (A)_{ii}}$. Note that compared to the iteration~\eqref{eq:averagediteration}, iteration~\eqref{eq:FB-RNN} has a larger allowable range of step sizes and improved contraction factor at the expense of computing a proximal operator at each iteration.

Alternatively, the fixed point may be computed by means of the Peaceman-Rachford splitting method, Algorithm~\ref{alg:PR-DR}, which can be written
\begin{equation}\label{eq:PR-RNN}
	\begin{aligned}
		x^{k+1} &= (I_n + \alpha(I_n - A))^{-1}(z^{k} + \alpha(Bu + b)), \\
		z^{k+1} &= z^{k} + 2\prox_{\alpha g}(2x^{k+1}-z^{k}) - 2x^{k+1}.
	\end{aligned}
\end{equation}
Since $\OF$ is Lipschitz \change{continuous} and $\OR_{\alpha\OG}$ is nonexpansive for every $\alpha > 0$, this iteration converges to the unique fixed point of~\eqref{eq:inn} for $\alpha$ in a suitable range by Theorem~\ref{thm:PR-DR}(i). Moreover, the contraction factor is $\frac{1 - \alpha(1 - \gamma)}{1 + \alpha(1 - \gamma)}$ for
$\alpha \in {]0, \frac{1}{1 - \min_i (A)_{ii}}]},$
which comes from the Lipschitz constant of $\OF$. In other words, the contraction factor is improved for Peaceman-Rachford compared to forward-backward splitting and the range of allowable step sizes is identical. For RNNs where $(I_n + \alpha(I_n - A))$ may be easily inverted, this splitting method may be preferred.

\subsection{Numerical Implementations}
\begin{figure}[h]
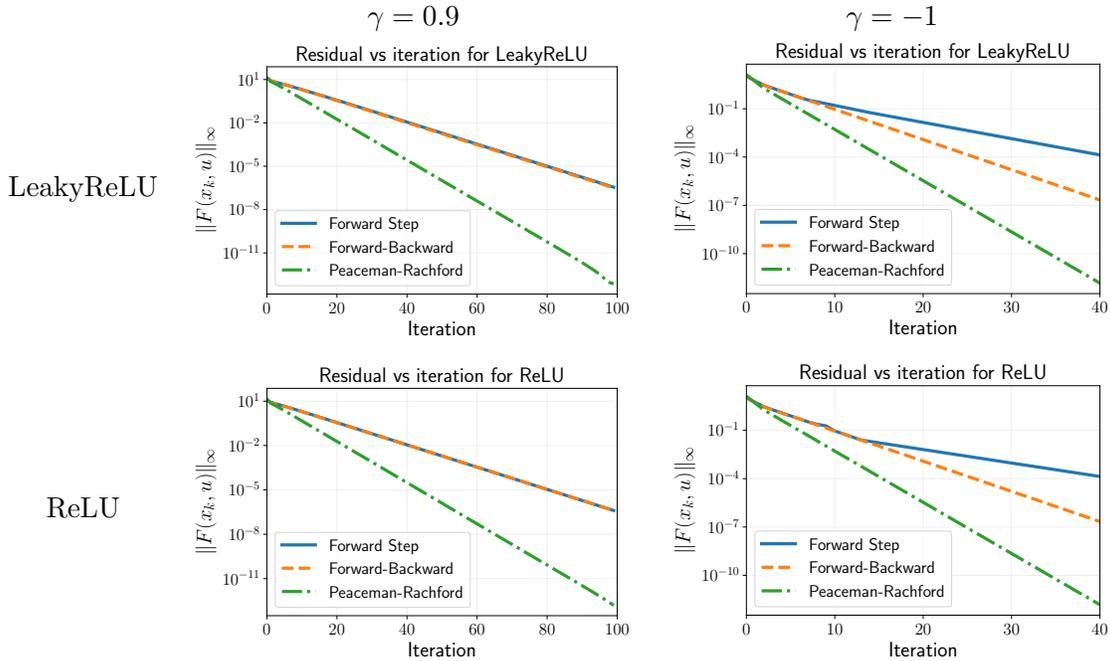

	\centering
	\begin{tabular}{ccc}
		&$\gamma = 0.9$ & $\gamma = -1$ \\
		\raisebox{5.2em}{$\LeakyReLU$} &\includegraphics[width=0.39\linewidth]{PythonData/LR09.pdf} & \includegraphics[width=0.39\linewidth]{PythonData/LR-1.pdf} \\
		\raisebox{5.2em}{$\ReLU$} &\includegraphics[width=0.39\linewidth]{PythonData/ReL09.pdf} & \includegraphics[width=0.39\linewidth]{PythonData/ReL-1.pdf}
	\end{tabular}
	\caption{Residual versus number of iterations for forward-step method~\eqref{eq:averagediteration}, forward-backward splitting~\eqref{eq:FB-RNN}, and Peaceman-Rachford splitting~\eqref{eq:PR-RNN} for computing the equilibrium of the recurrent neural network~\eqref{eq:inn}. The top two plots correspond to $\phi = \LeakyReLU$ with $a = 0.1$ and the bottom two plots correspond to $\phi = \ReLU$. The left two plots correspond to $\gamma = 0.9$ and the right two plots correspond to $\gamma = -1$. Curves for the forward-step method and forward-backward splitting are directly on top of one another in the left two plots. Note the difference in the number of iterations with respect to the parameter $\gamma$.}\label{fig:iterations}
\end{figure}
%\sashatodo{Include other activation functions and include simulations for $\mathrm{ReLU}$ as well as $\mu_{\infty}$ which is negative to showcase the difference between forward step and forward-backward splitting.}
To assess the efficacy of the iterations in~\eqref{eq:averagediteration},~\eqref{eq:FB-RNN}, and~\eqref{eq:PR-RNN}, we generated $A, B, b, u$ in~\eqref{eq:inn} and applied the iterations to compute the equilibrium. We generate $A \in \real^{200 \times 200}, B \in \real^{200 \times 50}, u \in \real^{50}, b \in \real^{200}$ with entries normally distributed as $A_{ij}, B_{ij}, b_{i}, u_i \sim \mathcal{N}(0, 1)$. To ensure that $A \in \real^{200 \times 200}$ satisfies the constraint $\mu_{\infty,[\eta]^{-1}}(A) \leq \gamma$ for some $\eta \in \realpositive^{200}$, we pick $[\eta] = I_{200}$ and orthogonally project $A$ onto the convex polytope $\setdef{A \in \real^{200 \times 200}}{\mu_{\infty}(A) \leq \gamma}$ using CVXPY~\citep{SD-SB:16}. In experiments, we consider $\gamma \in \{-1, 0.9\}$ and consider activation functions $\phi(x) = \ReLU(x) = \max\{x,0\}$ and $\phi(x) = \LeakyReLU(x) = \max\{x,ax\}$ with $a = 0.1$.\footnote{Note that the slope bounds from~\eqref{eq:slope-restriction} are $d_{1} = 0, d_{2} = 1$ for $\ReLU$ and $d_{1} = a, d_{2} = 1$ for $\LeakyReLU$ with $a \in {[0,1[}.$} The proper, l.s.c., convex $f$ corresponding to these activation functions are available in~\citep[Table~1]{JL-CF-ZL:19}.

For all iterations, we initialize $x^0$ at the origin and for the Peaceman-Rachford iteration, we initialize $z^0$ at the origin. For each iteration we pick the largest theoretically allowable step size, which in all cases was $\frac{1}{1-\min_i (A)_{ii}}$ (since $\min_{i\in \until{n}} (A)_{ii}$ was negative in all cases). For the case of $\gamma = 0.9$, we found that the largest theoretically allowable step size was $\alpha \approx 0.182$ and for $\gamma = -1$ the largest step size was $\alpha \approx 0.175$. The plots of the residual $\|x_k - \Phi(Ax_k + Bu + b)\|_{\infty} = \|F(x_k,u)\|_{\infty}$ versus the number of iterations for all different cases is shown in Figure~\ref{fig:iterations}.$\;\!$\footnote{All iterations and graphics were run and generated in Python. Code to reproduce experiments is available at \url{https://github.com/davydovalexander/RNN-Equilibrium-NonEucMonotone}.}

We see that, when $\gamma = 0.9$, both forward-step and forward-backward splitting methods for computing the equilibrium of~\eqref{eq:inn} converge at the same rate. This result agrees with the theory since $\gamma > 0$, so that $\max\{d_{1}\gamma,d_{2}\gamma\} = \gamma$ for both $\ReLU$ and $\LeakyReLU$ and the estimated contraction factor for both the forward step method and forward-backward splitting is $1 - \alpha(1 - \gamma) \approx 0.982$. For the Peaceman-Rachford splitting method and $\gamma = 0.9$, the estimated contraction factor is $\frac{1-\alpha(1-\gamma)}{1+\alpha(1-\gamma)} \approx 0.964$, which justifies the improved rate of convergence. When $\gamma = -1$, the forward-backward splitting method converges faster than the forward step method. This result agrees with the theory since the estimated contraction factor for the forward step method is $1 - \alpha(1-\phi(\gamma)) \approx 0.807$ in the case of $\LeakyReLU$ and $\approx 0.825$ in the case of $\ReLU$ while the estimated contraction factor for forward-backward splitting is $1 - \alpha(1-\gamma) \approx 0.649$ independent of activation function. On the other hand, for the Peaceman-Rachford splitting method and $\gamma = -1$, the estimated contraction factor is $\frac{1-\alpha(1-\gamma)}{1+\alpha(1-\gamma)} \approx 0.481$, which justifies the improved rate of convergence.

\subsection{Tightened Lipschitz Constants for Continuous-Time RNNs}
%\sashatodo{Should we leave both cases of the proof? The single-valued case is instructive, but technically the result based on forward-backward splitting covers both cases already}
We are interested in studying the robustness of the RNN~\eqref{eq:inn} to input perturbations. In other words, given a nominal input, $u$, and its corresponding equilibrium output, $x_{u}^{*}$, we aim to upper-bound the deviation of the output due to a change in the input. The Lipschitz constant of a neural network is one common metric used to evaluate its robustness, as discussed in works such as~\citep{MF-AR-HH-MM-GJP:19,PLC-JCP:20,PP-AK-JB-PK-FA:21}. In the context of implicit neural networks, Lipschitz constants have been studied in~\citep{MR-RW-IRM:20,CP-EW-JZK:21,SJ-AD-AVP-FB:21f}, with~\citep{CP-EW-JZK:21} unrolling forward-backward splitting iterations to provide $\ell_{2}$ Lipschitz estimates. In what follows, we generalize the procedure in~\citep{CP-EW-JZK:21} using techniques from non-Euclidean monotone operator theory to provide novel and tighter $\ell_{\infty}$ Lipschitz estimates.

\begin{theorem}[Lipschitz estimate of equilibrium points of~\eqref{eq:inn}]\label{thm:RNN-Lip}
	Suppose that $A$ satisfies $\mu_{\infty,[\eta]^{-1}}(A) = \gamma < 1$ for some $\eta \in \realpositive^n$ and that $\phi = \prox_f$ for some proper, l.s.c., convex $\map{f}{\real}{{]{-}\infty,+{\infty}}]}$. Define $\map{f_{\ON}}{\real^{m}}{\real^{n}}$ by $f_{\ON}(u) = x_{u}^{*}$ where $x_{u}^{*}$ solves the fixed point problem $x_{u}^{*} = \Phi(Ax_{u}^{*} + Bu + b).$\footnote{Note that if $x_{u}^{*}$ solves the fixed point problem $x_{u}^{*} = \Phi(Ax_{u}^{*} + Bu + b)$, then it is an equilibrium point of the RNN~\eqref{eq:inn}.} Then for $\eta_{\max} = \max_{i\in\until{n}} \eta_i$, $\eta_{\min} = \min_{i\in \until{n}} \eta_i$, and $\Lip_{\infty}(f_\ON)$ denoting the minimal $\ell_{\infty}$ Lipschitz constant of $f_\ON$,
	\begin{equation}
		\Lip_{\infty}(f_{\ON}) \leq \frac{\eta_{\max}}{\eta_{\min}}\frac{\|B\|_{\infty}}{1 - \gamma}.
	\end{equation}
\end{theorem}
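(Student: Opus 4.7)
The plan is to reinterpret $x_u^*$ as the fixed point of the forward–backward splitting map associated with the decomposition from Proposition~\ref{prop:splittingRNN}, and then run the standard ``parameter-dependent contraction'' argument to extract Lipschitz dependence on $u$. Concretely, define, for step size $\alpha$ in the admissible range of Theorem~\ref{thm:fwd-backwd}\ref{fwdbackwardOneMonotone},
\[
T_u^\alpha(x) \;:=\; \prox_{\alpha g}\bigl((1-\alpha)x + \alpha A x + \alpha(Bu+b)\bigr),
\]
so that $T_u^\alpha$ is exactly the forward–backward iteration map~\eqref{eq:FB-RNN} for the splitting~\eqref{eq:splitting}. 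For every $\alpha > 0$, the (unique) fixed point of $T_u^\alpha$ is the equilibrium $x_u^* = f_{\ON}(u)$, and for $\alpha \in {]0, 1/(1-\min_i A_{ii})]}$ the map $T_u^\alpha$ is a contraction w.r.t.\ $\|\cdot\|_{\infty,[\eta]^{-1}}$ with factor $1-\alpha(1-\gamma)$.

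Next I would bound the sensitivity of $T_u^\alpha$ in the input $u$, keeping $x$ fixed. Since $T_u^\alpha(x)$ and $T_v^\alpha(x)$ differ only in their arguments by $\alpha B(u-v)$, and $\prox_{\alpha g}$ is nonexpansive w.r.t.\ $\|\cdot\|_{\infty,[\eta]^{-1}}$ by Proposition~\ref{prop:nonexpansive} (applicable because $g(z)=\sum_i f(z_i)$ is separable), I obtain
\[
\|T_u^\alpha(x) - T_v^\alpha(x)\|_{\infty,[\eta]^{-1}} \;\leq\; \alpha\,\|B(u-v)\|_{\infty,[\eta]^{-1}}.
\]
Using $x_u^* = T_u^\alpha(x_u^*)$ and $x_v^* = T_v^\alpha(x_v^*)$, the triangle inequality gives
\[
\|x_u^* - x_v^*\|_{\infty,[\eta]^{-1}} \;\leq\; \bigl(1-\alpha(1-\gamma)\bigr)\|x_u^* - x_v^*\|_{\infty,[\eta]^{-1}} + \alpha\,\|B(u-v)\|_{\infty,[\eta]^{-1}},
\]
and cancelling $\alpha$ yields $\|x_u^* - x_v^*\|_{\infty,[\eta]^{-1}} \leq (1-\gamma)^{-1}\|B(u-v)\|_{\infty,[\eta]^{-1}}$.

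Finally I would convert from the weighted $\ell_\infty$ norm back to the standard $\ell_\infty$ norm via the elementary equivalences $\|x\|_{\infty} \leq \eta_{\max}\|x\|_{\infty,[\eta]^{-1}}$ on the left and $\|y\|_{\infty,[\eta]^{-1}} \leq \eta_{\min}^{-1}\|y\|_{\infty}$ on the right, combined with the submultiplicativity $\|B(u-v)\|_\infty \leq \|B\|_\infty \|u-v\|_\infty$. Stringing the three inequalities together gives the claimed bound $\Lip_\infty(f_\ON) \leq (\eta_{\max}/\eta_{\min})\,\|B\|_\infty/(1-\gamma)$.

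The only subtle point is the first step: one must check that the choice of $\alpha$ in the admissible range is compatible both with $T_u^\alpha$ being a contraction (so the cancellation after the triangle inequality is legitimate) and with $\prox_{\alpha g}$ being well-defined as the resolvent of $\partial g$ (Proposition~\ref{prop:resolvent-proximal}); both follow from existing results in the paper, so no serious obstacle arises. The estimate is independent of the specific $\alpha$ because $\alpha$ cancels in the sensitivity calculation, which is what makes the argument clean.
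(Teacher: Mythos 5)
Your proposal is correct and rests on exactly the same ingredients as the paper's proof: the forward--backward splitting map $x \mapsto \prox_{\alpha g}\bigl(\OS_{\alpha(\Id - A)}(x) + \alpha(Bu+b)\bigr)$, its contraction factor $1-\alpha(1-\gamma)$ w.r.t.\ $\|\cdot\|_{\infty,[\eta]^{-1}}$ from Lemma~\ref{lemma:fwdstepLip}\ref{1infnormfwdstep}, the nonexpansiveness of $\prox_{\alpha g}$ from Proposition~\ref{prop:nonexpansive}, and the equivalence constants between $\|\cdot\|_{\infty,[\eta]^{-1}}$ and $\|\cdot\|_\infty$. The only difference is mechanical: the paper unrolls the iteration from $x_u^0 = x_v^0 = \vectorzeros[n]$, passes to the limit $k\to\infty$, and sums the geometric series $\sum_{i\ge 0}\Lip(\OS_{\alpha(\Id-A)})^i \le 1/(\alpha(1-\gamma))$, whereas you apply the standard perturbation-of-fixed-points argument for parameter-dependent contractions, so the factor $1/(\alpha(1-\gamma))$ appears directly from the cancellation after the triangle inequality at the fixed points. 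Your version is slightly cleaner (no limit argument needed, only existence and uniqueness of $x_u^*$, which Theorem~\ref{thm:fwd-backwd}\ref{fwdbackwardOneMonotone} supplies), and the two derivations are quantitatively identical; the $\alpha$-independence of the final bound is the same phenomenon in both.
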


\begin{proof}
	We consider the forward-backward splitting iteration given input $u$ as $x_{u}^{k+1} = \prox_{\alpha g}((1-\alpha)x_{u}^{k} + \alpha(Ax_{u}^{k} + Bu + b))$ with initial condition $x_{u}^0 = \vectorzeros[n]$ which is guaranteed to converge for $\alpha \in {]0, \frac{1}{1 - \min_i (A)_{ii}}]}$ since $\prox_{\alpha g}$ is nonexpansive and $\OS_{\alpha\OF}$ is a contraction w.r.t.\ $\|\cdot\|_{\infty,[\eta]^{-1}}$ for every $\alpha$ in this range where $\OF$ is defined as in~\eqref{eq:splitting}. We find
	\begin{align}
		\|x_{u}^{k} - x_{v}^{k}\|_{\infty,[\eta]^{-1}} &= \|\prox_{\alpha g}((1-\alpha)x_{u}^{k-1} + \alpha(Ax_{u}^{k-1} + Bu + b)) \nonumber\\ & \quad - \prox_{\alpha g}((1-\alpha)x_{v}^{k-1} + \alpha(Ax_{v}^{k-1} + Bv + b))\|_{\infty,[\eta]^{-1}} \nonumber\\
		%&\leq \|(1-\alpha)(x_{u}^{k-1}-x_{v}^{k-1}) + \alpha(Ax_{u}^{k-1} + Bu) - \alpha(Ax_{v}^{k-1} + Bv)\|_{\infty,[\eta]^{-1}}
		&\leq \|\OS_{\alpha(\Id-A)}(x_{u}^{k-1} - x_{v}^{k-1})\|_{\infty,[\eta]^{-1}} + \alpha\|B(u-v)\|_{\infty,[\eta]^{-1}}\label{eq:prox-nonex} \\
		%&\leq \Lip(\OS_{\alpha(\Id-A)})\|x_{u}^{k-1} - x_{v}^{k-1}\|_{\infty,[\eta]^{-1}} + \alpha\|B(u-v)\|_{\infty,[\eta]^{-1}} \\
		&\leq \Lip(\OS_{\alpha(\Id-A)})^{k}\|x_{u}^{0} - x_{v}^{0}\|_{\infty,[\eta]^{-1}} + \alpha\|B(u-v)\|_{\infty,[\eta]^{-1}} \sum_{i=0}^{k-1} \Lip(\OS_{\alpha(\Id-A)})^i \nonumber\\%\label{eq:unroll}\\
		&= \alpha\|B(u-v)\|_{\infty,[\eta]^{-1}}\sum_{i=0}^{k-1} \Lip(\OS_{\alpha(\Id-A)})^i \label{eq:initial-cond},
	\end{align}
	where~\eqref{eq:prox-nonex} holds because of nonexpansiveness of $\prox_{\alpha g}$ and the triangle inequality
	%,~\eqref{eq:unroll} is a consequence of further unrolling the iteration, 
	and~\eqref{eq:initial-cond} is a consequence of $x_{u}^0 = x_{v}^0 = \vectorzeros[n]$.
	
	Since the forward-backward splitting iteration converges for every $\alpha$ in the desired range, we can take the limit as $k \to \infty$ and find that $x_{u}^{k} \to x_{u}^{*}$ and $x_{v}^{k} \to x_{v}^{*}$ as $k \to \infty$. Then
	\begin{align}
		\|x_{u}^{*} - x_{v}^{*}\|_{\infty,[\eta]^{-1}} &\leq \alpha\|B(u-v)\|_{\infty,[\eta]^{-1}}\sum_{i=0}^{\infty} \Lip(\OS_{\alpha(\Id-A)})^i \\
		&= \frac{\alpha\|B(u-v)\|_{\infty,[\eta]^{-1}}}{1 - \Lip(\OS_{\alpha(\Id-A)})} \leq \frac{\alpha\|B(u-v)\|_{\infty,[\eta]^{-1}}}{1 - (1- \alpha (1-\gamma))} = \frac{\|B(u-v)\|_{\infty,[\eta]^{-1}}}{1-\gamma},
	\end{align}
	which implies the result because $\eta_{\max}^{-1}\|z\|_{\infty} \leq \|z\|_{\infty,[\eta]^{-1}} \leq \eta_{\min}^{-1}\|z\|_{\infty}$ for every $z \in \real^{n}$.
\end{proof}

\begin{remark}
	In~\citep[Corollary~5]{SJ-AD-AVP-FB:21f}, the following Lipschitz estimate is given:
	\begin{equation}\label{eq:old-Lip-est}
		\Lip_{\infty}(f_{\ON}) \leq \frac{\eta_{\max}}{\eta_{\min}}\frac{\|B\|_{\infty}}{1 - \max\{\gamma,0\}}.
	\end{equation}
	The Lipschitz estimate in Theorem~\ref{thm:RNN-Lip} is always a tighter bound than the estimate~\eqref{eq:old-Lip-est} and allows the choice of negative $\gamma$ to further lower the Lipschitz constant of the RNN. Indeed, one way to make the neural network more robust to uncertainties in its input would be to ensure that $\gamma$ is a large negative number.
\end{remark}

\section{Conclusion}
In this paper, we introduce a non-Euclidean version of classical results in monotone operator theory with a focus on mappings that are monotone with respect to diagonally-weighted $\ell_{1}$ or $\ell_{\infty}$ norms. Our results show that the resolvent and reflected resolvent maintain many useful properties from the Hilbert case, and we prove that commonly used algorithms for finding zeros of monotone operators and their sums remain effective in the non-Euclidean setting. We applied our theory to the problem of equilibrium computation and Lipschitz constant estimation of recurrent neural networks, yielding novel iterations and tighter upper bounds on Lipschitz constants via forward-backward splitting.

Topics of future research include (i) extending results to more general Banach spaces with a focus on $L_{1}$ and $L_{\infty}$ spaces, (ii) studying the convergence of additional operator splitting methods such as forward-backward-forward~\citep{PT:00} and Davis-Yin~\citep{DD-WY:17} splittings, (iii) extending the theory to variable step size methods, and (iv) considering additional machine learning applications such as $\ell_{\infty}$ robustness of deep neural networks as a non-Euclidean analog of~\citep{PLC-JCP:20} or reinforcement learning and dynamic programming, where $\ell_{\infty}$ contractive and nonexpansive operators are prevalent; see the recent work~\citep{JL-EKR:23} for preliminary ideas in this direction.

\acks{%All acknowledgements go at the end of the paper before appendices and references.
%Moreover, you are required to declare funding (financial activities supporting the
%submitted work) and competing interests (related financial activities outside the submitted work).
%More information about this disclosure can be found on the JMLR website.
This material was supported
in part by the National Science Foundation Graduate Research Fellowship under grant 2139319 and in part by AFOSR
under grant FA9550-22-1-0059.}

% Manual newpage inserted to improve layout of sample file - not
% needed in general before appendices/bibliography.

%\newpage

%\appendix
%\section{Proof of Lemma~\ref{lemma:fwdstepLip}}\label{app:Lipfwd}

\vskip 0.2in
\bibliography{alias,Main,FB,New}

\end{document}